\theoremstyle{plain}
\newtheorem{theorem}{Theorem}[section]
\newtheorem{proposition}[theorem]{Proposition}
\newtheorem{lemma}[theorem]{Lemma}
\newtheorem{corollary}[theorem]{Corollary}
\newtheorem{definition}[theorem]{Definition}
\newtheorem{question}[theorem]{Question}
\newtheorem{example}[theorem]{Example}
\newtheorem{claim}[theorem]{Claim}
\newtheorem{assumption}[theorem]{Assumption}
\newtheorem{problem}[theorem]{Problem}
\theoremstyle{remark}
\newtheorem{remark}[theorem]{Remark}
\renewcommand{\P}{\mathbb{P}}
\renewcommand{\k}{\mathscr{k}}
\DeclareMathOperator{\Spec}{Spec }
\DeclareMathOperator{\Hom}{Hom}
\DeclareMathOperator{\PGL}{PGL}
\DeclareMathOperator{\Hilb}{Hilb}
\DeclareMathOperator{\I}{\mathcal{I}}
\renewcommand{\O}{\mathcal{O}}
\DeclareMathOperator{\CT}{\mathsf{CT}}
\newcommand{\from}{\colon}
\DeclareMathOperator{\jac}{\mathsf{Jac}}
\DeclareMathOperator{\Thin}{\mathsf{Thin}}
\DeclareMathOperator{\Fat}{\mathsf{Fat}}
\DeclareMathOperator{\spec}{Spec \,}
\DeclareMathOperator{\Bl}{Bl}
\DeclareMathOperator{\Sym}{Sym}
\DeclareMathOperator{\Gm}{\mathbb{G}_{m}}
\DeclareMathOperator{\Z}{\mathbb{Z}}
\DeclareMathOperator{\SQP}{\mathsf{SQP}}
\DeclareMathOperator{\Gr}{Gr}
\DeclareMathOperator{\CH}{CH}
\DeclareMathOperator{\s}{\mathsf{sq}}
\DeclareMathOperator{\BP}{\mathsf{Bpt}}
\DeclareMathOperator{\Dom}{\mathsf{Dom}}
\DeclareMathOperator{\Inf}{\mathsf{Inf}}
\DeclareMathOperator{\Fin}{\mathsf{Fin}}
\DeclareMathOperator{\Lin}{\mathsf{Lin}}
\DeclareMathOperator{\Inc}{\mathsf{Inc}}
\DeclareMathOperator{\lin}{\mathsf{lin}}
\DeclareMathOperator{\inc}{\mathsf{inc}}
\DeclareMathOperator{\Base}{\mathsf{Base}}
\DeclareMathOperator{\hess}{\mathsf{H}}
\DeclareMathOperator{\codim}{codim}
\DeclareMathOperator{\Supp}{Supp}
\DeclareMathOperator{\Sing}{Sing}
\DeclareMathOperator{\m}{\mathfrak{m}}
\DeclareMathOperator{\ord}{ord}
\DeclareMathOperator{\GL}{GL}
\DeclareMathOperator{\length}{length}
\DeclareMathOperator{\ver}{ver}
\DeclareMathOperator{\red}{red}
\author{Anand Deopurkar \& Anand Patel}
\title{Counting 3-uple Veronese Surfaces}
\date{\today}
\begin{document}
\maketitle

\section{Introduction} 

The classical fact that $d+3$ general points
in $d$-dimensional projective space $\P^{d}$ determine a unique rational
normal curve can be seen in many ways:
\begin{itemize}
\item By explicit algebraic construction.
\item By Steiner's geometric construction.
\item By an elementary degeneration argument, wherein $d+1$ of the points specialize into a hyperplane.
\item By an application of Goppa's lemma from the theory of association or Gale duality. 
\end{itemize}

A {\sl $d$-uple Veronese $n$-fold} will mean any
variety in $\P^{{n+d \choose n}-1}$ projectively equivalent to the
standard $d$-uple Veronese image of $\P^n$. A parameter count 
uncovers an infinite array of enumerative problems, with the rational normal curves occupying only the first column: 
\begin{problem}
    \label{problem:main}
Determine the number of $d$-uple Veronese $n$-folds passing through ${n+d \choose d}+n+1$ general points.
\end{problem}
These numbers will be denoted $\nu_{d,n}$.  \Cref{problem:main} has seen virtually no advancement beyond the case of rational normal curves.  Arthur Coble, about a century ago, used his new theory of association to find that $9$ general points in $\P^5$ determine precisely $4$ $2$-uple Veronese surfaces \cite[~Theorem 19]{CobleAssociatedSO}.  The configuration of these four surfaces is as special as Coble's argument showing $\nu_{2,2}=4$. He discovered, using what is now called Goppa's lemma, that a unique elliptic normal sextic curve $E \subset \P^{5}$ interpolates through the $9$ points. This implies that the $2$-uple Veronese surfaces containing all $9$ points must entirely contain $E$, an exceptional circumstance. This means that each surface corresponds to choosing a square root of the degree $6$ line bundle $\O_{E}(1)$.  And so, Coble established a correspondence 
\begin{align}
\label{Coblecorrespondence}
\left\{ \begin{tabular}{c}
    \emph{$2$-uple Veronese surfaces} \\
     \emph{containing the 9 points} 
\end{tabular} \right\} \longleftrightarrow \left\{ \begin{tabular}{c}
    \emph{Line bundles $L$ on $E$} \\
     \emph{satisfying $L^{2} \simeq \O_{E}(1)$} 
\end{tabular} \right\},\end{align}  from which he  deduced $\nu_{2,2}=4$. 
There is currently no explanation for the number $4$ without the curve $E$. Specifically, no approach parallel to those available for the $n=1$ case is known.

 Our work in this paper solves the next case of \Cref{problem:main}. 

\begin{theorem}
\label{theorem:main}  Thirteen general points in $\P^{9}$ determine $4246$ $3$-uple Veronese surfaces, i.e. $\nu_{3,2} = 4246$.
\end{theorem}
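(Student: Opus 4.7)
The Hilbert scheme $H$ of $3$-uple Veronese surfaces in $\P^{9}$ is the homogeneous space $\PGL_{10}/\PGL_{3}$, of dimension $99-8=91$. For a general $q\in\P^{9}$, the locus $H_{q}\subset H$ of surfaces through $q$ has codimension $7$, and since $13\cdot 7=91$ the count $\nu_{3,2}$ equals the expected intersection number $\int_{\bar H}[H_{q}]^{13}$ on a suitable compactification $\bar H$. The plan is to evaluate this intersection number by degenerating the thirteen point conditions rather than the surfaces themselves, and tracking which limit surfaces survive.

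Concretely, I would specialize the thirteen general points along a one-parameter family into a distinguished configuration (for instance, collapsing several onto a fixed hyperplane $\P^{8}\subset\P^{9}$, or colliding pairs), and use conservation of number to decompose $\nu_{3,2}$ into contributions from the components of the limit cycle. The limiting Veronese surfaces fall into strata indexed by the mode of degeneration of the cubic linear system on $\P^{2}$: surfaces that acquire a linear fixed component (peeling off a lower Veronese surface or a rational normal curve), surfaces that develop base points or base loci, and surfaces that acquire non-reduced (``fat'') structure along a distinguished subvariety. For each stratum the contribution reduces to a lower-dimensional enumerative problem — rational normal curves of appropriate degree through a residual set of points, $2$-uple Veronese surfaces (equal to $4$ through $9$ general points by Coble's theorem, and similar variants), or purely linear incidences — weighted by a combinatorial multiplicity reflecting the tangency order with which $\bar H$ meets the degenerating incidence divisor along that stratum. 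An observation that forecloses the most naive ``Coble-style'' shortcut: any curve on $v_{3}(\P^{2})$ has degree divisible by $3$, so there is no analogue of Coble's interpolating elliptic sextic through the $13$ points, and the count must genuinely come from the boundary decomposition.

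Summing the weighted contributions over all strata should then produce $\nu_{3,2}=4246$. The principal obstacle is twofold: first, producing an exhaustive and non-overlapping classification of the boundary strata of $\bar H$ — missing even a single stratum, or allowing two to overlap, invalidates the final sum; second, correctly computing the intersection multiplicities, most delicately where the limit surface is non-reduced and the relevant obstruction theory contributes excess. A secondary technical concern is that $\bar H$ be sufficiently transverse to the degenerating incidence divisors to yield a clean enumerative count, which may require iterated blow-ups of $\bar H$ along offending boundary loci before the expected intersection number can be read off literally.
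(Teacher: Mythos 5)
Your proposal outlines a degeneration strategy that is genuinely different from the route the paper takes, but it is a plan, not a proof, and the gaps are substantial. You do not produce the compactification $\bar H$, you do not classify the boundary strata, you do not compute any of the intersection multiplicities, and the final number $4246$ is only asserted to ``should'' emerge from the sum. Worth flagging: the paper explicitly remarks that even for Coble's case $\nu_{2,2}=4$ no degeneration argument parallel to the $n=1$ rational normal curve case is known, and \Cref{question:degeneration} at the end poses exactly this as an open problem. So the strategy you propose is not just uncarried-out — it is one the authors identify as a missing piece of the subject, and there is real reason to believe it is difficult (non-reduced limits of Veronese surfaces, wild degenerations of the cubic system, and the need for a compactification with controlled boundary all conspire here). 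Your observation that curves on $v_{3}(\P^{2})$ have degree divisible by $3$, so no interpolating curve of matching degree exists, is correct, but you draw the wrong conclusion from it: this rules out only the literal ``one interpolating curve'' trick, not association itself.

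In fact the paper does run a Coble-style argument, just not via an interpolating curve. \Cref{theorem:translation} uses association (Goppa's lemma applied to members of the pencil of quartics through the residual base scheme) to convert the problem into counting \emph{singular triads} $T\in\Hilb_{3}\P^{2}$ for thirteen associated points in $\P^{2}$, i.e.\ length-$3$ schemes $T$ admitting a pencil of irreducible quintics singular along $T$ and through the thirteen points. The rest of the paper then makes this count rigorous: the Hilbert scheme is replaced by the space $\CT$ of complete triangles to make the squaring map $\I_{T}\mapsto\I_{T}^{2}$ regular; a rank-$9$ bundle $E$ and a Grassmannian bundle $\SQP$ over $\CT$ are introduced; excess components $\Inc(p),\Lin(p)$ are identified and their multiplicities computed (\Cref{theorem:BPdecomposition}); infinite base loci are excluded by a limit linear series argument (\Cref{proposition:LinDom}); and the final integral $\int_{\SQP}[\Dom(p)]^{13}$ is evaluated by Atiyah--Bott localization, producing $4246$. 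None of this machinery appears in your sketch, and the portion of the argument you do describe — enumerate strata, compute multiplicities, sum — is precisely what the paper's authors could not make work directly and built the $\CT$/$\SQP$/localization apparatus to avoid. If you want to pursue your approach, you would at minimum need to (i) choose a concrete modular compactification of the space of Veronese surfaces, (ii) prove the incidence cycles meet the boundary with the transversality required for conservation of number, and (iii) actually enumerate and weight the limit surfaces — each of these is a theorem, not a remark.
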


A caricature of the proof best serves to explain the contents of the paper.  The first step is to use a correspondence like \eqref{Coblecorrespondence}, though more intricate. It is the content of \Cref{theorem:translation} in \S \ref{sec:translation}.  This correspondence trades the counting of $3$-uple Veronese surfaces for the counting of certain triples of points in the plane called \emph{singular triads} $T \in \Hilb_{3}\P^{2}$.  It first appeared in the work of the second author and A. Landesman \cite{landesman2019interpolation} on interpolation \footnote{It is quite plausible that Coble, the foremost expert on relationships between  association and Cremona transformations, knew about the correspondence in \Cref{theorem:translation}.  One hypothesis as to why he didn't write about this particular correspondence might be that the resulting problem of counting singular triads posed too many complications given the technology available at the time.}.  We give a thorough account to keep things self-contained.  

In order to count singular triads we need access to a vector bundle which to a point $T \in \Hilb_{3}\P^{2}$ assigns the vector space $H^{0}(\P^{2},\I^{2}_{T}(5))$ of quintic forms singular at $T$.  Unfortunately this vector bundle doesn't exist because of a familiar failure of flatness: the scheme obtained by squaring the ideal of a length $3$ scheme is not always a length $9$ scheme.  And so the second step is to deal with this non-vector-bundle. We swap out the Hilbert scheme for a birational modification we call the space of \emph{complete triangles} $\CT$. The name is chosen because of many similarities it shares with the space of complete conics.  We study the geometry of $\CT$ in \S \ref{sec:completetriangles}.  While it can be shown that $\CT$ is the quotient of the space of \emph{ordered} triangles studied by S. Keel in \cite{keel1993functorial} by the natural symmetric group action, our construction of $\CT$ is of independent interest as it does not require first ordering triangles. The vector bundle we sought in the previous paragraph exists over $\CT$, and we gain an enumerative understanding of it using Atiyah-Bott localization.  At least at first glance, the set of singular triads is the degeneracy scheme in $\CT$ of a vector bundle map involving our newfound bundle. Porteous' formula, implemented using \texttt{sage}, then suggests that the number we seek is $57728$.  

All is not well, however, because there is still a gnarly excess in the Porteous setup.  Our third step is to circumvent this new complication by further linearizing the problem,  switching to a $26$-dimensional Grassmannian bundle over $\CT$ which we call the space of \emph{singular quintic pencils} $\SQP$.  Only in $\SQP$ do we finally find an excess-free vantage point.  \emph{Proving} the lack of excess is painful, requiring a combination of dimension counting and limit linear series arguments.  This  verification is the subject of \S \ref{sec:pencilspace}, and is the content behind \Cref{theorem:Domintegral} which expresses $\nu_{3,2}$ as an integral: 
\begin{align}
\label{integral}
\nu_{3,2} = \int_{\SQP} \left[\Dom(p) \right]^{13}.
\end{align}  The details are not so important right now, but $\Dom(p)$ is a relevant codimension $2$ subvariety of $\SQP$.   In \S \ref{sec:finishing} we compute the integral \eqref{integral} using Atiyah-Bott localization, performed with the help of \texttt{sage}. Finally, in \S \ref{sec:questions} we discuss some of the many questions emerging from our investigation. We've included the sage code we used for the calculations in \S \ref{sec:sagecode}.

\subsection{Relation to other work} Apart from the obvious connection to Coble's work, the present paper is related to some other work which deserves mentioning.  Despite much of the progress on curve counting, there aren't many examples of counts of higher dimensional varieties.  The closest work in this sense is due to Coskun in \cite{coskun2006enumerative, coskun2006degenerations}.  While our construction of the space of (unordered) complete triangles is novel, it has an {\sl ordered} antecedent in the work of Collino and Fulton in \cite{collino1989intersection} and in the work of Keel in \cite{keel1993functorial}. 

\subsection{Notation and conventions}

Our ground field $\k$ is algebraically closed and of characteristic $0$.  All schemes considered in the paper are separated and of finite-type over $\k$.  If $A$ is a $\k$-vector space, and if $X$ is a scheme then $\underline{A}$ will denote the constant vector bundle on $X$ whose fibers are $A$. If $Z$ is a closed subscheme of a scheme $X$ then $\I_{Z}$ will denote its ideal sheaf.  

If $V$ is a vector bundle, then $\P V$ denotes its projectivization which represents lines in $V$.  In particular, $H^{0}(\P V, \O_{\P V}(1)) = V^{*}$ canonically.  Similarly $\Gr(k,V)$ denotes the Grassmannians representing $k$-dimensional subspaces.

\section{The correspondence}
\label{sec:translation}

The fundamental trick for computing $\nu_{3,2}$ is to switch to the task of counting corresponding {\sl triples} of
non-collinear points $\{a,b,c\} \subset \P^2$ satisfying certain conditions
relative to 13 prescribed, general points $\Gamma_{13} \subset \P^2$.  It is the content of \Cref{theorem:translation} in this section.  Whether
 similar useful correspondences are available for determining other $\nu_{d,n}$'s remains an intriguing question.  
 
 The correspondence critically uses Coble's theory of association, and we begin by
reviewing this theory following the incisive account in \cite{EISENBUD2000127}.  

\subsection{Association}
\label{sec:association}

We let $R$ be a Gorenstein $0$-dimensional $\k$-algebra of length $d$,
and we let $\Gamma = \Spec R$, and write \[\pi: \Gamma \to \Spec \k\] for
the structural morphism.  The Gorenstein condition says that the dualizing sheaf $\omega_{\pi}$,
associated to the $R$-module
\[ \Hom_{\k}(R,\k),\] is invertible and in fact generated by the trace
functional. The evaluation map
\[ev: \pi_{*}\omega_{\pi} \to \k\] sends a functional 
$f \in \Hom_{\k}(R,\k)$ to $f(1)$.

Given a line bundle $L$ on $\Gamma$ (equivalently a rank $1$
free $R$-module), one obtains a natural pairing
\[\langle, \rangle: \pi_{*}L \times 
  \pi_{*}\left( L^{-1} \otimes_{\O_{\Gamma}}
    \omega_{\pi}\right) \to \k \] which is a perfect pairing between
two $d$-dimensional $\k$-vector spaces, thanks to the Gorenstein
property.  So, if
$V \subset H^{0}(L) = \pi_{*}L$ is any
$r+1$-dimensional vector subspace,  then we can define its {\sl
  associated subspace}
\[V^{\perp} \subset H^{0}(L^{-1}\otimes \omega_{\pi})\] to be $V$'s orthogonal complement with respect to $\langle, \rangle$.

\begin{remark}
The passage from $V$ to $V^{\perp}$ can also be done
in a relative setting: If $\pi : \mathcal{G} \to S$ is a finite, degree $d$
Gorenstein morphism of schemes, and if $\mathcal{L}$ is a line bundle
on $\mathcal{G}$, then a rank $r+1$ sub-bundle
\[\mathcal{V} \subset \pi_{*}\mathcal{L}\] (with locally free
quotient) has an associated rank $d-r-1$
sub-bundle
\[\mathcal{V}^{\perp} \subset \pi_{*} \left(\mathcal{L}^{-1} \otimes
    \omega_{\mathcal{G}/S}\right)\] with locally free quotient.
\end{remark}

How does one identify $V^{\perp}$ in practice?
Goppa's theorem provides an answer 
in a common geometric situation:

\begin{lemma}[Goppa]
  \label{lemma:goppa} Let $C$ be smooth projective curve, $L$ a
  non-special line-bundle on $C$, and $\Gamma \subset C$ a finite
  subscheme such that the restriction map
  $H^{0}(C,L) \to H^{0}(\Gamma, L_{\Gamma})$ is injective.  Then the
  image of the vector space
  $H^{0}(C, \omega_{C}(\Gamma) \otimes L^{-1})$ in
  $H^{0}(\Gamma, \omega_{\Gamma} \otimes L^{-1})$ induced by the
  adjunction isomorphism
  $\omega_{C}(\Gamma)|_{\Gamma} \otimes L^{-1} \to \omega_{\Gamma} \otimes L^{-1}$ is the associated
  space to $H^{0}(C,L)$.
\end{lemma}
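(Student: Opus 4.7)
The plan is to view both $H^0(L)^{\perp}$ and the image of $H^0(C,\omega_C(\Gamma)\otimes L^{-1})$ as subspaces of $H^0(\Gamma,\omega_\Gamma\otimes L_\Gamma^{-1})$, show the second is contained in the first by a residue computation, and finish by matching dimensions via Riemann--Roch. The entry point is the adjunction exact sequence
\[ 0 \to \omega_{C}\otimes L^{-1} \to \omega_{C}(\Gamma)\otimes L^{-1} \to \omega_{\Gamma}\otimes L_{\Gamma}^{-1} \to 0, \]
whose right surjection uses exactly the adjunction isomorphism $\omega_C(\Gamma)|_\Gamma \cong \omega_\Gamma$. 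Passing to cohomology and invoking Serre duality on $C$, the outer groups $H^0(\omega_C\otimes L^{-1}) = H^1(L)^{*}$ and $H^1(\omega_C(\Gamma)\otimes L^{-1}) = H^0(L(-\Gamma))^{*}$ both vanish: the first because $L$ is non-special, the second because the restriction $H^0(L)\to H^0(\Gamma,L_\Gamma)$ is assumed injective. So the restriction map $H^0(C,\omega_C(\Gamma)\otimes L^{-1}) \to H^0(\Gamma,\omega_\Gamma\otimes L_\Gamma^{-1})$ is an injection.

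Next I will show the image lies in $H^0(L)^{\perp}$ via the residue theorem. Given $s \in H^0(C,L)$ and $\tau \in H^0(C,\omega_C(\Gamma)\otimes L^{-1})$, the product $s\tau$ is a global rational differential on $C$ with at worst simple poles along $\Gamma$, so $\sum_{p\in\Gamma}\mathrm{Res}_p(s\tau)=0$. The geometric content of the adjunction $\omega_C(\Gamma)|_\Gamma \cong \omega_\Gamma$ --- more precisely the Grothendieck residue realization of this isomorphism --- is that this sum computes exactly the pairing $\langle s|_\Gamma,\tau|_\Gamma\rangle$ from \S\ref{sec:association}. Hence $\tau|_\Gamma$ annihilates $H^0(L)$ under $\langle,\rangle$, which puts the image inside the associated subspace.

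To close the argument I will match dimensions. A Riemann--Roch calculation on $C$ combined with the vanishing $H^1(\omega_C(\Gamma)\otimes L^{-1})=0$ already observed gives $\dim H^0(\omega_C(\Gamma)\otimes L^{-1}) = \deg\Gamma - \deg L - 1 + g$, while the associated subspace has dimension $\deg\Gamma - \dim H^0(L) = \deg\Gamma - \deg L - 1 + g$ by Riemann--Roch for $L$ and the injectivity hypothesis. Equidimensionality together with the containment established above forces equality.

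The delicate step will be the middle one: verifying that the adjunction $\omega_C(\Gamma)|_\Gamma \cong \omega_\Gamma$ is implemented concretely by the sum of local Grothendieck residues, so that the residue theorem on $C$ literally evaluates the association pairing on $\Gamma$. This compatibility between Serre duality on $C$ and the local Gorenstein pairing on the zero-dimensional scheme $\Gamma$ is the real substance of the lemma; once it is pinned down, the cohomological vanishings and the dimension count assemble into the conclusion with no further surprises.
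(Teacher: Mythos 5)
The paper does not actually prove \Cref{lemma:goppa}; it states it and points to the account in \cite{EISENBUD2000127}, so there is no authorial argument to compare against. Your proposal is a correct and essentially the standard residue-theoretic proof: the adjunction exact sequence, the two Serre-duality vanishings ($H^{1}(L)=0$ and $H^{0}(L(-\Gamma))=0$) giving both injectivity of the restriction and the Riemann--Roch dimension count, the containment via the global residue theorem, and the dimension match. The structure is sound and the direction of the containment is right.

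Two small points worth tightening. First, the phrase ``at worst simple poles along $\Gamma$'' is only accurate when $\Gamma$ is reduced; the lemma allows an arbitrary finite subscheme, so $s\tau$ can have poles of higher order at a fat point. This costs nothing --- the residue theorem and the identification of the Grothendieck trace with sum of local residues both hold for arbitrary pole order --- but the statement should not suggest simplicity. Second, you correctly isolate the genuinely nontrivial ingredient, namely that under the adjunction isomorphism $\omega_C(\Gamma)|_\Gamma \cong \omega_\Gamma$ the evaluation functional $\mathrm{ev}\colon \pi_*\omega_\Gamma \to \k$ from \S\ref{sec:association} becomes the sum of local residues, so that the global residue theorem literally evaluates $\langle\,,\,\rangle$. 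You flag this rather than prove it; as written the argument leans on this compatibility as a cited fact. That is a reasonable thing to do (it is a standard piece of Grothendieck duality for finite subschemes of a smooth curve), but if the proof were to be self-contained this is the one step that would need to be spelled out, for instance by computing $\omega_\Gamma$ and the trace explicitly on $\mathrm{Spec}\,\k[z]/(z^n)$ and matching it with the order-$n$ residue. Once that is in place, the cohomological vanishings and the dimension count close the argument exactly as you say.
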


Armed with Goppa's theorem, we're now ready to switch problems.

\subsection{The correspondence}

Let $\mathcal{H}$ denote the variety parametrizing $3$-uple Veronese surfaces in $\P^{9}$, and let $$\mathcal{X} \subset \mathcal{H} \times (\P^{9})^{13}$$ denote the irreducible, closed subvariety parametrizing tuples $([V],q_1,\dots, q_{13})$ satisfying $q_{i} \in V$ for all $i$.  We let $\mathcal{X}^{\circ} \subset \mathcal{X}$ denote the open subset  parametrizing tuples $$\left([V], q_1, \dots, q_{13}\right)$$ which satisfy the following conditions: 

\begin{enumerate}
    \item The points $q_{i}$ should be distinct;
    \item When we think of $V$ as a projective plane $\P^{2}$, the points $q_{i}$ should define a pencil of plane quartic curves whose base scheme consists of $\{q_1, \dots, q_{13}\}$ together with three distinct non-collinear points $R \subset \P^{2}$. Furthermore, the triangle spanned by $R$  should not contain any of the points $q_{i}$. 
\end{enumerate}
We let $$\pi: \mathcal{X} \to (\P^{9})^{13}$$ denote the map sending $([V],q_1, \dots, q_{13})$ to $(q_{1}, \dots, q_{13})$, and we note that $$\nu_{3,2} = \# \pi^{-1}(\{(q_1, \dots, q_{13})\})$$ for general choices of points $q_{i}$. 

Next, we let $\Hilb_{3}\P^{2}$ denote the Hilbert scheme parametrizing length $3$ subschemes of $\P^{2}$, and we define  $$\mathcal{Y} \subset \Hilb_{3}\P^{2} \times (\P^{2})^{13}$$ to be the locally closed subvariety parametrizing tuples $\left([T], p_1, \dots, p_{13}\right)$ satisfying the following conditions: 

\begin{enumerate}
    \item The length $3$ subscheme $T \subset \P^{3}$ is reduced, and is not contained in any line.
    \item The triangle spanned by $T$ does not contain any of the points $p_{i}$.
    \item The points $p_{i}$ are distinct.
    \item There exist two reduced, irreducible degree $5$ curves containing all points $p_{i}$ and singular at the three points of $T$.
\end{enumerate}

If $\left([T], p_{1}, \dots, p_{13} \right)$ is an element of $\mathcal{Y}$, we say $T$ \emph{is a {\bf singular triad} for $p_{1}, \dots, p_{13}$.}
We write $$\eta: \mathcal{Y} \to (\P^{2})^{13}$$ for the map sending $([T],p_{1}, \dots, p_{13})$ to $(p_{1}, \dots, p_{13})$.  Observe that the groups $\PGL(10)$ and $\PGL(3)$ act on $\P^{9}$ and $\P^{2}$, respectively, and furthermore induce natural actions on $\mathcal{X}, \mathcal{X}^{\circ}$ and $\mathcal{Y}$. 

\begin{theorem}
\label{theorem:translation}
Let $(q_{1}, \dots, q_{13}) \in (\P^{9})^{13}$ be a general tuple with associated tuple $(p_{1}, \dots, p_{13}) \in (\P^{2})^{13}.$  There exists a bijective correspondence 
\begin{align*}
    \left\{ \begin{tabular}{c}
         $3$-uple Veronese\\ surfaces $V \subset \P^{9}$ \\
         containing $q_{1}, \dots , q_{13}$
    \end{tabular}\right\} \longleftrightarrow \left\{ \begin{tabular}{c}
         Singular triads \\ $T \subset \P^{2}$ \\
         for $p_{1}, \dots, p_{13}$
    \end{tabular} \right\}.
\end{align*}
\end{theorem}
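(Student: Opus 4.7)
The plan is to construct the bijection explicitly and verify it via Goppa's lemma. For the forward direction, given $([V], q_1, \ldots, q_{13}) \in \mathcal{X}^{\circ}$, identify $V$ with $\P^{2}$ (uniquely up to $\PGL(3)$) and write $p_i' \in \P^{2}$ for the point corresponding to $q_i$. By hypothesis the pencil of quartics through the $p_i'$ has residual base locus three non-collinear points $R$. Let $\phi_R \from \P^{2} \dashrightarrow \P^{2}$ denote the standard Cremona transformation based at $R$, and let $T$ be the three points in the target plane obtained by contracting the three sides of the triangle spanned by $R$. The forward map sends $([V], q_1, \ldots, q_{13})$ to $([T], p_1, \ldots, p_{13})$ with $p_i = \phi_R(p_i')$. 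The inverse map is built symmetrically: starting from $([T], p_1, \ldots, p_{13}) \in \mathcal{Y}$, apply $\phi_T$ to produce points $p_i'$, embed this $\P^{2}$ into $\P^{9}$ via the $3$-uple Veronese, and transport the image into the ambient $\P^{9}$ by the unique $\PGL(10)$ element matching the Veronese images of the $p_i'$ to the $q_i$ (existence is part of what must be checked).

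The crucial step is to identify $p_i = \phi_R(p_i')$ with the classically associated tuple of $(q_1, \ldots, q_{13})$, so that the $p_i$ depend only on the $q_i$ and not on $V$. Since the $3$-uple Veronese is linearly normal in $\P^{9}$, restriction gives an isomorphism $H^{0}(\P^{9}, \O(1)) \simeq H^{0}(\P^{2}, \O_{\P^{2}}(3))$, which reduces Coble association of the $q_i$ to Gorenstein association of the $p_i'$ through the linear system $|\O_{\P^{2}}(3)|$. To compute the associated $3$-dimensional subspace I would choose a smooth quartic $C$ from the pencil through the $p_i'$ and apply Goppa's lemma (\Cref{lemma:goppa}) with $L = \O_C(3)$. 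Adjunction gives $\omega_C \simeq \O_C(1)$, and the linear equivalence $\Gamma + R \sim 4H|_C$ (where $\Gamma = \sum p_i'$) yields $\omega_C(\Gamma) \otimes L^{-1} \simeq \O_C(2H - R)$. Its global sections coincide with the restriction to $C$ of the space of plane conics through $R$, which is the linear system defining $\phi_R$. Therefore association sends $p_i' \mapsto \phi_R(p_i') = p_i$, as required.

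To finish the forward direction I would verify conditions (1)--(4) defining $\mathcal{Y}$. Reducedness, non-collinearity of $T$, and the non-incidence of the triangle of $T$ with the $p_i$ follow from the analogous properties of $R$ and the $p_i'$, using that $\phi_R$ interchanges the points and lines of the two dual triangles. Condition (4) is the crux: on the blowup $X \to \P^{2}$ at $R$, which is simultaneously the blowup of the target $\P^{2}$ at $T$, the linear-equivalence identity $4H - \sum E_i = 5\tilde H - 2\sum \tilde E_i$ identifies the system $|\I_R(4)|$ of quartics in the source with the system $|\I_T^{2}(5)|$ of quintics singular at $T$ in the target. Adding the point conditions then matches the pencil of quartics through $\{p_i'\} \cup R$ with a pencil of quintics through the $p_i$ singular at $T$; reducedness and irreducibility of two members are open conditions satisfied for general $q_i$.

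The two maps are mutually inverse by construction: $\phi_T$ is the inverse of $\phi_R$, and the same Goppa calculation run in reverse shows that the Veronese image of $p_i' = \phi_T(p_i)$ has classically associated tuple exactly $p_i$, so lies in the same $\PGL(10)$-orbit as the original $q_i$. The main obstacle is organizational: keeping track of the two distinct copies of $\P^{2}$ (the Veronese one carrying $R$ and the associated one carrying $T$, interchanged by the Cremona) and running the Goppa computation without conflating them. The various open non-degeneracy conditions are then verified by routine dimension counts.
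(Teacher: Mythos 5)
Your proof is correct and follows essentially the same route as the paper: identify the residual triple $R$ in the quartic pencil on $V$, take the quadratic Cremona transformation based at $R$, and use Goppa's lemma on a smooth member of the pencil (with $\omega_C \simeq \O_C(1)$ and $\Gamma + R \sim 4H$) to show that the Cremona image is the classically associated tuple, with the inverse built via the reverse Cremona and the $3$-uple Veronese (equivalently, $|6L - 3E|$ on the blowup at $T$). Your Goppa computation is somewhat more explicit than what the paper records, and the paper is a bit more careful than you are about pinning down the $\PGL(3)$ ambiguity in the Cremona so that $T$ is actually well-defined; otherwise the two arguments coincide.
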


\begin{proof}
    $A$ and $B$ will denote the sets $\pi^{-1}(\{(q_{1}, \dots, q_{13})\})$ and $\eta^{-1}(\{(p_{1}, \dots, p_{13})\})$, respectively -- the objective is to show $A$ and $B$ are in bijection.  A simple dimension count, which we omit, shows that $A$ and $B$ are finite sets.  As $\mathcal{X}$ and $(\P^{9})^{13}$ are both  irreducible and $117$-dimensional, and because $(q_{1}, \dots, q_{13}) \in (\P^{9})^{13}$ is general, it follows that $A \subset \mathcal{X}^{\circ}$. 
    
    First we describe a function $$\Phi: A \to B.$$ Choose any $([V],q_{1}, \dots, q_{13}) \in A$ to begin with. When we interpret $V$ as a projective plane, the thirteen points $q_{i} \in V$ determine a general pencil of quartic plane curves $C_{t} \subset V$, $t \in \P^{1}$. Let $R \subset V$ denote the three points residual to $\{q_{1}, \dots, q_{13}\}$ in the base locus of the pencil $C_{t}$.  Since $A \subset \mathcal{X}^{\circ}$, when we view $V$ as a plane, the triangle in $V$ spanned by $R$ does not contain any of the points $q_{i}$, and the general member of $C_{t}$ is a smooth quartic curve.  

    Now let $$\mu: V \dashrightarrow \P^{2}$$ denote the quadratic Cremona transformation with indeterminacy set $R$, well-defined up to post-composition with elements of $\PGL(3)$. By applying Goppa's theorem to the divisor $q_{1} + \dots + q_{13}$ on any smooth member of the pencil $C_{t}$, we conclude that the tuple $(\mu(q_{1}), \dots, \mu(q_{13}))$ is associated to $(q_{1}, \dots, q_{13}).$  Let $R' \subset \P^{2}$ denote the three points which are the images of the three lines contracted under $\mu$.  Since $(p_{1}, \dots, p_{13})$ is associated to $(q_{1}, \dots, q_{13})$ by assumption, and since the latter tuple is general, it follows that there is a unique element  $g \in \PGL(3)$ which takes $(\mu(q_{1}), \dots, \mu(q_{13}))$ to $(p_{1}, \dots, p_{13})$. 
    
    Set $T := g (R')$. We will verify the membership $$([T],p_{1}, \dots, p_{13}) \in B,$$ and then declare $$\Phi([V],q_{1}, \dots, q_{13}) := ([T],p_{1}, \dots, p_{13}).$$  To that end, we must show $([T],p_{1}, \dots, p_{13})$ satisfies the requirements for membership in  $\mathcal{Y}.$ First, $T$ is a reduced, non-collinear set of three points because the same was true for $R$. Second, if some point  $p_{i}$ was contained in the triangle spanned by $T$, then by applying an appropriate reverse Cremona transformation, it would follow that the corresponding point $q_{i}$ was contained in the triangle spanned by $R$, contrary to assumption. Third, the points $p_{i}$ are distinct because no two $q_{j}$'s are contained in a line spanned by two of the points of $R$. Fourth and finally, by considering the curves $\mu(C_{t})$ for $t \in \P^{1}$ general, we find at least two irreducible quintic curves singular at $T$ and passing through $(p_{1}, \dots, p_{13})$. 

    The function $\Phi: A \to B$ having been defined, we now define a mapping $\Psi: B \to A$ which is readily seen to be its inverse.  To start, choose $([T],p_{1}, \dots, p_{13}) \in B$. Let $\widetilde{\P}^{2} \to \P^{2}$ denote the blow-up of $\P^{2}$ at the three points of $T$.  On $\widetilde{\P}^{2}$, let $L$ and $E$ denote the divisor classes of a general line in $\P^{2}$ (pulled back to the blow-up) and the sum of the three exceptional curves, respectively, and let $Q,Q' \in |5L-2E|$ be the strict transforms of two of the quintic curves mentioned in the membership requirements for $\mathcal{Y}$.  Observe that $Q\cap Q' = \{p_{1}, \dots, p_{13}\}$. By applying Goppa's theorem to the divisor $p_{1}+ \dots + p_{13}$ on $Q$, it follows that the map $$\gamma: \widetilde{\P}^{2} \to \P^{9}$$ given by the complete linear series $|6L-3E|$ is such that the tuple $(\gamma(p_{1}), \dots, \gamma(p_{13})) \in (\P^{9})^{13}$ is associated to $(p_{1}, \dots, p_{13}) \in (\P^{2})^{13}$.  Much as before, there is a unique element $h \in \PGL(10)$ which sends $(\gamma(p_{1}), \dots, \gamma(p_{13}))$ to $(q_{1}, \dots, q_{13})$. The map $$h \circ \gamma: \widetilde{\P}^{2} \to \P^{9}$$ has as image a $3$-uple Veronese surface $V$ containing the points $q_{i}$ for all $i$. Define $\Psi$ by $\Psi([T],p_{1},\dots, p_{13}) := ([V],q_{1}, \dots, q_{13}).$   $\Phi$ and $\Psi$ are inverses, proving the theorem.
\end{proof}

\subsection{Returning to the original objective}
\label{subsec:returning}
The stated objective in this paper is to determine $\#\pi^{-1}(\{(q_{1}, \dots, q_{13})\})$ for general points $q_{i} \in \P^{9}$.  Using \Cref{theorem:translation}, we instead will try to compute $$\#\eta^{-1}(\{p_{1}, \dots, p_{13}\})$$ for general points $p_{i} \in \P^{2}$.  We will still face several obstacles.

The first obstacle arises when we want to refer to the rank $9$ ``vector bundle" $E$ on $\Hilb_{3}\P^{2}$ whose fiber over a point $[T]$ is the vector space $$H^{0}\left(\P^{2}, \left(\O_{\P^{2}}/ \I_{T}^{2}\right)\otimes \O_{\P^{2}}(5)\right).$$  With such a bundle we can apply the Porteous formula to access the locus where the natural morphism $$\underline{H^{0}\left(\P^{2},\I_{\{p_{1}, \dots, p_{13}\}}(5)\right)} \to E$$ has at least a $2$-dimensional kernel -- the key membership condition defining the set $\eta^{-1}(\{(p_{1}, \dots, p_{13})\})$. Unfortunately, $E$ is not actually a vector bundle because its rank jumps from $9$ to $10$ over the locus parametrizing fat schemes.  And so our attention turns to replacing $\Hilb_{3}\P^{2}$ with a better parameter space, the space of {\sl complete triangles} $\CT$, which resolves this jumping wrinkle.  We do all this and more in the next section, which is dedicated to the geometry and construction of $\CT$.

\section{The space of complete triangles}
\label{sec:completetriangles}

Let $\Hilb_{3}\P^{2}$ and $\Hilb_{3}\check{\P}^{2}$ denote the Hilbert
schemes of $0$-dimensional, length $3$ subschemes of a projective
plane and its dual plane, respectively.  Upon choosing coordinates $[X:Y:Z]$ for $\P^{2}$ and $[\check{X}:\check{Y}:\check{Z}]$ for $\check{\P}^{2}$, the group
$\PGL(3)$ acts on both Hilbert schemes naturally.
Under this action, $\Hilb_{3}\P^{2}$ decomposes into $7$ orbits -- (A) three non-collinear points, (B) three collinear points, (C) a length two point and a reduced point, noncollinear, (D) a length two point and a collinear point, (E) A length 3 non-reduced subscheme of a conic, (F) a length 3 non-reduced subscheme of a line, (G) a fat point, given by the square of a maximal ideal.  Of these, (F) and (G) are closed.   Our objective in this section is to construct and analyze a $\PGL(3)$-equivariant modification of $\Hilb_{3}\P^{2}$ which we call the space of {\sl complete triangles}, and which we denote by $\CT$.

\subsection{Nets of conics, Jacobian spaces, and constructing $\CT$}

\begin{definition}
  \label{definition:NetZ} 
  \begin{enumerate}
    \item Let $T \in \Hilb_{3}\P^{2}$ be a length three scheme. $T$'s {\bf net
    of conics} is the vector space
  $\Lambda_{T} := H^{0}(\I_{T}(2)) \subset H^{0}(\O_{\P^{2}}(2)).$
  \item The {\bf Jacobian matrix} of three homogeneous quadratic forms
  $Q_{1},Q_{2},Q_{3}$ in the variables $X,Y,Z$ is:
  \begin{eqnarray}\label{jacmatrix}
    \begin{bmatrix}
      \frac{\partial Q_{1}}{\partial X} & \frac{\partial Q_{1}}{\partial Y} & \frac{\partial Q_{1}}{\partial Z} \\
      \frac{\partial Q_{2}}{\partial X} & \frac{\partial Q_{2}}{\partial Y} & \frac{\partial Q_{2}}{\partial Z} \\
      \frac{\partial Q_{3}}{\partial X} & \frac{\partial Q_{3}}{\partial Y} & \frac{\partial Q_{3}}{\partial Z} \\
    \end{bmatrix}
  \end{eqnarray}
  \item The {\bf Jacobian space} of $V \subset H^{0}(\P^{2}, \O(2))$ is the vector space spanned by all $2 \times 2$ minors of the Jacobian matrix of any basis $\langle Q_{1}, Q_{2}, Q_{3} \rangle$ of $V$.
  \item If $V \subset H^{0}(\P^{2}, \O_{\P^{2}}(2))$ is $3$-dimensional, we
  let $V^{*} \subset H^{0}(\check{\P}^{2}, \O_{\check{\P}^{2}}(2))$ denote its apolar
  $3$-dimensional space.
  \item A scheme $T \subset \P^{2}$ is {\bf fat} if it is isomorphic to
  $\spec k[x,y]/(x^{2},xy,y^{2})$. A scheme $T \subset \P^{2}$ is {\bf
    thin} if it is contained in a line. We define $\Fat, \Thin \subset \Hilb_{3}\P^{2}$ to be the
  closed loci of fat and thin schemes, respectively.
  \item A subscheme $T \subset \P^{2}$ (or $\check{\P}^{2}$) consisting of three distinct non-collinear points is called an {\bf honest triangle}
  \end{enumerate}
\end{definition}

\begin{remark}
\label{remark:alwaysthreedimensional}
\begin{enumerate}
  \item $\Lambda_{T}$ is a $3$-dimensional vector space, no matter the
  scheme $T$, as is easily checked for each of the $7$ $\PGL(3)$
  orbits separately.
  \item Apolarity is the natural pairing between $H^{0}(\P^{2}, \O_{\P^{2}}(2))$ and $H^{0}(\check{\P}^{2}, \O_{\check{\P}^{2}}(2))$, where the latter space is viewed as homogeneous second order differential operators on the former space. So, for instance, the pairing outputs $2$ for the pair $X^{2}, \check{X}^{2}$.
\end{enumerate}
\end{remark}

\begin{proposition}
  \label{proposition:JacobianNet}
  Let $T \subset \P^{2}$ be any length three scheme, $\Lambda_{T}$ its
  net of conics. Then the space $\jac(\Lambda_{T}^{*})$ is 
  $3$-dimensional.

\end{proposition}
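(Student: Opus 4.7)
The three constructions $T \mapsto \Lambda_T$, $\Lambda_T \mapsto \Lambda_T^*$, and $V \mapsto \jac(V)$ are all $\PGL(3)$-equivariant for the natural actions on $\P^{2}$ and $\check{\P}^{2}$, so $\dim \jac(\Lambda_T^*)$ depends only on the $\PGL(3)$-orbit of $T$. By the seven-orbit decomposition (A)--(G) of $\Hilb_{3}\P^{2}$ recalled at the start of this section, it therefore suffices to verify the proposition on one representative of each orbit.

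Each verification is a direct calculation of the same flavor: fix a normal form for $T$, write down a basis of $\Lambda_T \subset H^{0}(\P^{2}, \O(2))$, compute the apolar complement $\Lambda_T^* \subset H^{0}(\check{\P}^{2}, \O(2))$, and enumerate the nine $2 \times 2$ minors of the resulting $3 \times 3$ Jacobian matrix. For the generic orbit (A), taking $T$ to be the three coordinate points gives $\Lambda_T^* = \langle \check{X}^{2}, \check{Y}^{2}, \check{Z}^{2}\rangle$; the Jacobian matrix is diagonal and its minors span exactly $\langle \check{X}\check{Y}, \check{X}\check{Z}, \check{Y}\check{Z}\rangle$. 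The remaining six orbits admit equally explicit normal forms (e.g.\ the ideal $(X^{2}, XY, Y^{2})$ for the fat point, and analogous ones for the collinear and curvilinear cases), and in each a short minor computation pins the Jacobian span down to a 3-dimensional subspace.

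\textbf{Expected obstacle.} There is no genuine conceptual difficulty; the proof reduces to a seven-case bookkeeping exercise. The main subtlety appears in orbits (E), (F), (G), where $T$ is non-reduced and a basis for $\Lambda_T$ involves products or squares of linear forms, so the apolar complement must be taken with care (using the normalized pairing $\langle X^{i}Y^{j}Z^{k}, \check{X}^{i'}\check{Y}^{j'}\check{Z}^{k'}\rangle = i!\,j!\,k!\,\delta$). A uniform, orbit-free argument — perhaps realizing $\jac(\Lambda_T^*)$ as the net of conics of some naturally constructed scheme in $\check{\P}^{2}$ — would be aesthetically preferable, but the case-by-case verification is short and sufficient for our purposes.
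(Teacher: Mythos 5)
Your argument is exactly the paper's: reduce to the seven $\PGL(3)$-orbits via equivariance of $T \mapsto \jac(\Lambda_T^*)$ and check a normal form in each case. The paper omits the case checks entirely (some appear later as worked examples), while you sketch the generic orbit and flag the apolar-pairing normalization in the non-reduced cases, but the method is identical.
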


\begin{proof}
  By projective equivariance of the assignment
  $T \mapsto \jac(\Lambda_{T}^{*})$, it suffices to check the
  proposition on seven representatives of the $\PGL(3)$-orbits, which
  we omit.
\end{proof}

\begin{definition}
  \label{definition:dagger}
  Let $T \subset \P^{2}$ be a length $3$ scheme. We set
  $\Lambda_{T}^{\dagger} := \jac(\Lambda_{T}^{*})$.
\end{definition}

\subsection{Examples}
Let us give some calculations related to some of the things we've introduced. As a reminder, $[X:Y:Z]$ are homogeneous
coordinates in $\P^{2}$, and $[\check{X}:\check{Y}:\check{Z}]$ are dual
coordinates. Thus, the point $[\check{X}:\check{Y}:\check{Z}]$ represents the line in $\P^{2}$
defined by the equation $\check{X}X+\check{Y}Y+\check{Z}Z = 0$. Brackets $\langle \rangle $
will denote ``$\k$-linear span''.

\begin{example}
  \label{example:coordinatepoints}
  Let $T \subset \P^{2}$ be the three coordinate points (orbit (A)), so that
  $\Lambda_{T} = \langle XY, YZ, XZ \rangle $. Then
  $\Lambda_{T}^{*} = \langle \check{X}^{2}, \check{Y}^{2}, \check{Z}^{2} \rangle $, whose
  Jacobian space is $\langle \check{X}\check{Y}, \check{Y}\check{Z}, \check{X}\check{Z} \rangle
  $.   Therefore,
  \begin{eqnarray*}
    \Lambda_{T}^{\dagger} = \langle \check{X}\check{Y}, \check{Y}\check{Z}, \check{X}\check{Z} \rangle, 
  \end{eqnarray*}
  which is the net of conics for the coordinate points in $\check{\P}^{2}$.
\end{example}

\begin{example}
    \label{example:curvillinearconic} Let $T$ be a length three non-reduced subscheme of a conic -- orbit (E).  $T$ is given by the vanishing scheme of the net $\Lambda_{T} = \langle XY, X^{2}, YZ \rangle$. Therefore, $\Lambda^{*}_{T} = \langle \check{Y}^{2}, \check{Z}^{2}, \check{X}\check{Z} \rangle$.  

    Computing the Jacobian space, we get $$\Lambda_{T}^{\dagger} = \langle \check{Y}\check{Z}, \check{Y}\check{X}, \check{Z}^{2} \rangle$$ which is the net of conics of a length three subscheme of a smooth conic in $\check{\P}^{2}$. 
\end{example}

\begin{example}
    \label{example:collinearZnet}
    Now suppose $T$ is a length three scheme contained in the line
    $L$ given by $Z=0$ (orbit (F)).  Then
    $\Lambda_{T} = \langle XZ, YZ, Z^{2} \rangle$.  Note that this net depends only on $L$, regardless of the particular
    scheme $T \subset L$.
  
    The dual space $\Lambda_{T}^{*}$ is
    $\langle \check{X}^{2},\check{X}\check{Y}, \check{Y}^{2} \rangle$, which is also easily checked to
    be its own Jacobian. Therefore,
    \begin{eqnarray*}
      \Lambda_{T}^{\dagger} = \langle \check{X}^{2}, \check{X}\check{Y}, \check{Y}^{2} \rangle.
    \end{eqnarray*}
    
  \end{example}

\begin{example}
  \label{example:fatpointZnet}
  Let $T$ be the fat point supported at $[0:0:1]$ (orbit (G)).  Its net of
  conics is $\Lambda_{T} = \langle X^{2}, XY, Y^{2}\rangle$. The dual
  space $\Lambda_{T}^{*}$ is $\langle \check{X}\check{Z}, \check{Y}\check{Z}, \check{Z}^{2} \rangle$. This
  latter space is its own Jacobian. Therefore
  \begin{eqnarray*}
    \Lambda_{T}^{\dagger} = \langle \check{X}\check{Z}, \check{Y}\check{Z}, \check{Z}^{2} \rangle.
  \end{eqnarray*}
  
\end{example}

\begin{proposition} Let $T \in \Hilb_{3}\P^{2}$ be arbitrary. Then $\Lambda_{T}^{\dagger}$ is the net of conics for some (not
    necessarily unique) $T^{*} \in \Hilb_{3}\check{\P}^{2}$.

\end{proposition}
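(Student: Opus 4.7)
The plan is to reduce the statement to a finite check using $\PGL(3)$-equivariance. The construction $T \mapsto \Lambda_T^{\dagger}$ is manifestly $\PGL(3)$-equivariant (with the contragredient action on $\check{\P}^{2}$), since forming the net of conics, its apolar space, and the span of $2\times 2$ Jacobian minors are each natural operations. On the dual side, the morphism $\tau \from \Hilb_{3}\check{\P}^{2} \to \Gr(3, H^{0}(\check{\P}^{2}, \O(2)))$ given by $T^{*} \mapsto \Lambda_{T^{*}}$ is also equivariant, and its image $\mathcal{N}$ is a $\PGL(3)$-invariant closed subvariety (closed because $\Hilb_{3}\check{\P}^{2}$ is proper). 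Hence, to show $\Lambda_T^{\dagger} \in \mathcal{N}$ for every $T$, it suffices to verify this for one representative of each of the seven $\PGL(3)$-orbits (A)--(G).

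Orbits (A), (E), (F), and (G) are handled by \Cref{example:coordinatepoints}, \Cref{example:curvillinearconic}, \Cref{example:collinearZnet}, and \Cref{example:fatpointZnet}, where $\Lambda_T^{\dagger}$ is exhibited explicitly as the net of conics of an identifiable $T^{*} \subset \check{\P}^{2}$. Orbits (B) and (D) reduce to orbit (F): any $T$ in these orbits is contained in some line $L \subset \P^{2}$, and any conic vanishing on a length-$3$ subscheme of $L$ must contain $L$ by degree considerations. Therefore $\Lambda_{T} = L \cdot H^{0}(\P^{2}, \O(1))$, a $3$-dimensional space that depends only on $L$ and equals the net appearing in \Cref{example:collinearZnet}. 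Consequently $\Lambda_T^{\dagger}$ for orbits (B), (D) matches that for (F) and is the net of conics of a fat point in $\check{\P}^{2}$.

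It remains to verify orbit (C). Taking the representative $T$ equal to the length-$2$ scheme at $[0:0:1]$ with tangent line $X = 0$ together with the reduced point $[1:0:0]$, a short calculation gives $\Lambda_{T} = \langle Y^{2}, XY, XZ \rangle$, then $\Lambda_{T}^{*} = \langle \check{X}^{2}, \check{Z}^{2}, \check{Y}\check{Z} \rangle$, and finally $\Lambda_{T}^{\dagger} = \langle \check{X}\check{Y}, \check{X}\check{Z}, \check{Z}^{2} \rangle$. The scheme cut out by this last net is a length-$3$ orbit-(C) scheme in $\check{\P}^{2}$, consisting of a length-$2$ subscheme at $[0:1:0]$ (with tangent $\check{X} = 0$) plus the reduced point $[1:0:0]$, completing the check. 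The only real obstacle is the patience to run the orbit-(C) computation explicitly; all conceptual content is captured by the equivariance reduction together with the observation that orbits (B), (D), (F) share a single net of conics.
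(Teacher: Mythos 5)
Your proof is correct and follows the same orbit-by-orbit strategy the paper gestures at, while actually supplying the details the paper omits: you reduce orbits (B) and (D) to (F) by noting that the net of a thin scheme depends only on the spanning line, and you carry out the orbit (C) computation explicitly (which I verified: $\Lambda_T = \langle Y^2, XY, XZ \rangle$, $\Lambda_T^* = \langle \check{X}^2, \check{Z}^2, \check{Y}\check{Z} \rangle$, $\Lambda_T^\dagger = \langle \check{X}\check{Y}, \check{X}\check{Z}, \check{Z}^2 \rangle$, the net of a length-$2$ scheme at $[0:1:0]$ tangent to $\check{X}=0$ plus the reduced point $[1:0:0]$). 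This is a more complete version of the paper's ``check orbit by orbit'' proof rather than a different method.
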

\begin{proof}
  One checks this orbit by orbit -- we have done the most interesting examples above.  
\end{proof}

We can now state our main definition:

\begin{definition}\label{def:CT}
  The moduli space of {\bf complete triangles} is the closed subscheme
  $$\CT \subset \Hilb_{3}\P^{2} \times \Hilb_{3}\check{\P}^{2}$$ parametrizing
  pairs $(T, T^{*})$ satisfying
  \[\Lambda_{T}^{\dagger} = \Lambda_{T^{*}}.\]
\end{definition}

\begin{remark}
  The scheme structure on $\CT$ is induced by the condition
  $\Lambda_{T}^{\dagger} = \Lambda_{T^{*}}.$ Indeed, by
  \Cref{proposition:JacobianNet} and
  \Cref{remark:alwaysthreedimensional}, the assignments
  $(T, T^{*}) \mapsto \Lambda_{T}^{\dagger}$ and
  $(T, T^{*}) \mapsto \Lambda_{T^{*}}$ yield two vector subbundles of
  the trivial bundle $\underline{H^{0}(\check{\P}^{2}, \O_{\check{\P}^{2}}(2))}$
  over $\Hilb_{3}\P^{2} \times \Hilb_{3}\check{\P}^{2}$.  Requiring these two subbundles to be equal yields the scheme structure on
  $\CT$. $\CT$'s functor of points is inherent in this description.
\end{remark}

The diagonal action of $\PGL(3)$ on $\P^{2} \times \check{\P}^{2}$ induces an action on $\CT$  thanks to the projective equivariance of the assignment
\[\Lambda_{T} \mapsto \Lambda_{T}^{\dagger}.\]

From these calculations and projective equivariance of the
construction $\Lambda_{T} \mapsto \Lambda_{T}^{\dagger}$, we conclude:
\begin{proposition}
  \label{proposition:fatandtriple}
  Suppose $(T, T^{*}) \in \CT$. Then:
  \begin{enumerate} 
  \item $T$ is an honest triangle if and only
    if $T^{*}$ is an honest triangle. In this case, the points of
    $T^{*}$ correspond to the lines of the triangle spanned by pairs of points of $T$.

  \item If $T$ is a non-reduced scheme which is neither fat nor thin,
    then $T^{*}$ is unique, and vice versa.

  \item $T$ is a fat scheme if and only if $T^{*}$ is a thin scheme,
    and $T^{*}$ is contained in the line dual to the support of
    $T$. The same statement holds with $T$ and $T^{*}$ interchanged.

    \item $T^{*}$ is uniquely determined by $T$ if and only $T$ is not fat.
\end{enumerate}
\end{proposition}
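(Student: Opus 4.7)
My strategy is to exploit $\PGL(3)$-equivariance of $T \mapsto \Lambda_T^{\dagger}$: since this assignment is equivariant, I can verify each of (1)--(4) on one representative per orbit (A)--(G). For each such $T$, the candidates for $T^{*}$ are precisely the length-$3$ subschemes of $\check{\P}^{2}$ contained scheme-theoretically in $\Base(\Lambda_T^{\dagger})$, so uniqueness of $T^{*}$ reduces to showing this base locus is already a length-$3$ scheme. Existence of $T^{*}$ amounts to showing $\Base(\Lambda_T^{\dagger})$ \emph{contains} a length-$3$ subscheme.

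\textbf{Orbit-by-orbit argument.} For orbit (A), \Cref{example:coordinatepoints} already exhibits $\Lambda_T^{\dagger}$ as the net of the dual coordinate triangle, whose base locus is exactly three reduced points corresponding to the lines spanned by pairs of vertices of $T$; this settles (1) together with uniqueness in this orbit. I then repeat the same recipe (apolar-dualize, take $2 \times 2$ minors of the Jacobian, compute the base locus) for the remaining non-fat orbits: (E) is worked out in \Cref{example:curvillinearconic}, and (C), (D) are handled by picking convenient normal forms. In each case $\Base(\Lambda_T^{\dagger})$ will turn out to have length exactly $3$, yielding (2) and the portion of (4) away from $\Fat$. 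The three thin orbits (B), (D), (F) are treated uniformly via \Cref{example:collinearZnet}: since $\Lambda_T$ depends only on the line containing $T$, one obtains $\Lambda_T^{\dagger} = \langle \check{X}^{2}, \check{X}\check{Y}, \check{Y}^{2}\rangle$, whose base locus is the fat point at $[0:0:1]$, i.e. the point in $\check{\P}^{2}$ dual to the containing line. This makes $T^{*}$ exist and be unique as the fat scheme at the dual point, establishing the thin $\Rightarrow$ fat half of (3).

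\textbf{The fat orbit and the main obstacle.} For a fat $T$ (orbit (G)), \Cref{example:fatpointZnet} gives $\Lambda_T^{\dagger} = \langle \check{X}\check{Z}, \check{Y}\check{Z}, \check{Z}^{2}\rangle$, whose scheme-theoretic base locus is the entire line $\check{Z}=0$, which is the line dual to $\Supp(T)$. Any length-$3$ subscheme of this dual line then produces a valid $T^{*}$, since by \Cref{example:collinearZnet} all such subschemes share the same net, and no other $T^{*}$ can occur because $T^{*} \subset \Base(\Lambda_T^{\dagger})$. This completes (3) and the remaining direction of (4). The one technical (though modest) obstacle in the plan is the scheme-theoretic base-locus calculation for each non-fat orbit: a priori $\Base(\Lambda_T^{\dagger})$ could strictly contain the expected length-$3$ scheme and thereby spoil uniqueness of $T^{*}$. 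For (A) this is immediate, while for the non-reduced non-fat orbits (C), (D), (E), (F) it requires a short local computation of the ideal generated by a basis $(Q_{1}, Q_{2}, Q_{3})$ of $\Lambda_T^{\dagger}$ at the support of the non-reduced component, which is the only real work in the argument.
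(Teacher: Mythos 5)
Your proposal is correct and follows essentially the same orbit-by-orbit route the paper takes; the paper's own proof is merely a pointer to \Cref{example:coordinatepoints}, \Cref{example:fatpointZnet}, and \Cref{example:collinearZnet} with details left to the reader. Your explicit characterization of the admissible $T^{*}$ as the length-$3$ subschemes of $\Base(\Lambda_{T}^{\dagger})$ --- valid because $\Lambda_{T}^{\dagger}$ and $\Lambda_{T^{*}}$ are both $3$-dimensional, so a containment forces equality --- is a helpful organizing observation that the paper leaves implicit.
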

\begin{proof}
  Follows from calculations similar to those in \Cref{example:coordinatepoints}, \Cref{example:fatpointZnet} and \Cref{example:collinearZnet} -- we leave the details to the reader.
\end{proof}

\begin{remark}
  From \Cref{example:collinearZnet}, notice that if $T$ is thin
  and $(T, T^{*}) \in \CT$, then $T^{*}$ is the fat scheme supported
  on the point dual to the line containing $T$.
\end{remark}

\begin{proposition}\label{prop:irreducible}
  The reduction $\CT_{\red}$ is $6$-dimensional and irreducible.
\end{proposition}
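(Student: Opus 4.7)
The plan is to identify a $6$-dimensional irreducible open dense subset of $\CT$ and then show every point of $\CT$ lies in its closure. Let $U \subset \CT$ be the locus where $T$ is an honest triangle. By \Cref{proposition:fatandtriple}, $T^{*}$ is then also an honest triangle and is uniquely determined by $T$, so projection to the first factor identifies $U$ with the open orbit of honest triangles in $\Hilb_{3}\P^{2}$. This orbit is open, irreducible, and of dimension $6$. Hence the closure $\overline{U} \subset \CT_{\red}$ is irreducible of dimension $6$.

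Next I would show that $\CT \setminus U$ has dimension at most $5$, so that $\overline{U}$ accounts for every component of $\CT_{\red}$ of dimension $\geq 6$. Stratify $\CT$ by pulling back the seven $\PGL(3)$-orbits on $\Hilb_{3}\P^{2}$. A direct check of representatives of the six non-open orbits shows that each has dimension at most $5$. By \Cref{proposition:fatandtriple}(4), the fiber of $\CT \to \Hilb_{3}\P^{2}$ is a single point over any non-fat orbit, so the preimage there has dimension equal to the orbit. Over the $2$-dimensional fat orbit, \Cref{proposition:fatandtriple}(3) identifies the fiber with the $3$-dimensional variety of thin subschemes of the dual line, so the preimage has dimension $5$. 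In every case, $\CT \setminus U$ has dimension at most $5$.

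The remaining task, and the main obstacle, is to rule out extra lower-dimensional components by showing every $(T,T^{*}) \in \CT$ actually lies in $\overline{U}$. When $T$ is not fat, pick any $1$-parameter deformation of $T$ to an honest triangle in $\Hilb_{3}\P^{2}$; \Cref{proposition:fatandtriple}(4) together with properness of $\CT \to \Hilb_{3}\P^{2}$ yields a unique lift to a curve in $\CT$ landing at $(T,T^{*})$, placing it in $\overline{U}$. The genuinely hard case is $T$ fat and supported at a point $p$, with $T^{*}$ an arbitrary thin subscheme of the dual line $p^{\vee}$: I must produce a family of honest triangles $T_{t}$ collapsing to $T$ whose uniquely determined duals $T_{t}^{*}$ limit to the prescribed $T^{*}$. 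By $\PGL(3)$-equivariance this reduces to a single fat point, and the claim is then verified by direct computation using the recipe $T \mapsto \Lambda_{T}^{\dagger}$, in the spirit of \Cref{example:coordinatepoints}, \Cref{example:collinearZnet}, and \Cref{example:fatpointZnet}: as the tangent data of the degenerating family $T_{t}$ vary, the limits of $T_{t}^{*}$ sweep out the full $\Sym^{3}$ of $p^{\vee}$. Alternatively, one can invoke the smoothness and irreducibility of Keel's space of ordered complete triangles and pass to the $S_{3}$-quotient.
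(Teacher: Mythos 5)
Your overall decomposition of the problem is correct: you rightly identify the open dense locus $U$ of honest triangles as the candidate irreducible $6$-dimensional piece, your stratified dimension count for $\CT \setminus U$ is valid, and your handling of the non-fat case via \Cref{proposition:fatandtriple}(4) together with properness of $\CT \to \Hilb_{3}\P^{2}$ is the right argument. However, the crux of the proof is the fat case, and there you only gesture at a computation. You need to show that every $(T,T^{*})$ with $T$ fat and $T^{*}$ an \emph{arbitrary} length-$3$ subscheme of the dual line lies in $\overline{U}$, i.e.\ that as the degenerating family $T_{t} \to T$ varies, the limiting $T^{*}_{t}$ sweeps out the \emph{entire} $\Hilb_{3}$ of the dual line, including non-reduced limits. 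You assert this is ``verified by direct computation'' but do not carry it out, and the non-reduced cases (where the first-order tangent directions of the collapsing triangle degenerate) are exactly where such a computation becomes delicate.

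The paper sidesteps this entirely via the $\P^{2} \leftrightarrow \check{\P}^{2}$ symmetry. By \Cref{proposition:fatandtriple}(3), $T$ fat implies $T^{*}$ thin, hence $T^{*}$ is \emph{not} fat. So one deforms $T^{*}$ in $\Hilb_{3}\check{\P}^{2}$ to honest triangles, uses that $\Lambda^{\dagger}_{T^{*}_{b}}$ uniquely determines a scheme $T_{b} \in \Hilb_{3}\P^{2}$ for every $b$ (the dual-side version of (4)), and lifts the curve to $\CT$. The limit at $b=0$ is automatically $(T, T^{*})$ because $T^{*}$ not fat forces $T$ to be uniquely determined. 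In other words, the fat case for $T$ is just the ``not-fat'' case for $T^{*}$, and your own non-fat argument applies verbatim on the dual side. This replaces the unproven sweeping claim with a one-line symmetry observation. Your alternative via Keel's space would also work in principle, but the paper is constructing $\CT$ independently and the identification with a quotient of Keel's space has not been established at this point, so it cannot be invoked here without further work.
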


\begin{proof}
  From \Cref{proposition:fatandtriple}, it suffices to show: Given
  a pair $(T, T^{*}) \in \CT$ with $T$ fat, there exists an
  irreducible pointed curve $(B,0)$ and a map $f \from B \to \CT$ such
  that $f(0) = (T, T^{*})$ and for all $b \in B \setminus \{0\}$, $f(b)$
  is an honest triangle.  This is sufficient because the open locus of
  honest triangles is clearly $6$-dimensional and irreducible. Note
  that by symmetry, we need not consider the case where $T$ is thin.

  Since $T$ is fat, \Cref{proposition:fatandtriple} says $T^{*}$ is
  thin. In any case, since the open subset of $\Hilb_{3}(\check{\P}^{2})$
  parametrizing triples of three distinct, non-collinear points is
  Zariski dense, there exists a pointed curve $(B,0)$ and a map
  \begin{eqnarray*}
    f \from B \to \Hilb_{3}\check{\P}^{2}
  \end{eqnarray*}
  with $f(0) = T^{*}$, and $f(b)$ a triple of three non-collinear
  points, for all $b \neq 0$. For all points $b \in B$, the space
  $\Lambda_{f(b)}^{\dagger}$ determines a unique length three scheme
  $T_{b} \in \Hilb_{3}(\P^{2})$ such that
  $\Lambda_{f(b)}^{\dagger} = \Lambda_{T_{b}}$. Therefore, the map $f$
  lifts to a map $f \from B \to \CT$, and this lift has the desired
  properties, namely that $f(0) = (T, T^{*})$ and $f(b)$ is an honest triangle
  for $b \neq 0$.
\end{proof}

\subsection{Smoothness} Our next major objective is to show that $\CT$
is smooth. We do so by establishing smoothness at a particular point
$(T, T^{*}) \in \CT$, and then concluding by exploiting the $\PGL(3)$ action,  upper semi-continuity and
\Cref{prop:irreducible}.

\begin{proposition}\label{prop:smoothpoint} Let
  $(T, T^{*}) \in \CT$ be the complete triangle with $T$ given by the
  homogeneous ideal $(X^{2},XY,Y^{2})$, and $T^{*}$ given by the ideal
  $(\check{Z}, \check{X}^{3})$. The scheme $\CT$ is smooth at $(T, T^{*})$.
\end{proposition}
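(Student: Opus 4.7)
The plan is to compute the Zariski tangent space $T_{(T,T^*)} \CT$ directly and show it has dimension $6$, which combined with $\dim_{(T,T^*)} \CT_{\red} = 6$ from \Cref{prop:irreducible} forces smoothness at the point. We parameterize the $12$-dimensional tangent space of the smooth ambient scheme $\Hilb_3 \P^2 \times \Hilb_3 \check{\P}^2$ at $(T, T^*)$ via first-order deformations of the defining ideals. In the chart $Z=1$ around $[0:0:1]$, a syzygy-lifting analysis for $(X^2, XY, Y^2)$ restricts the admissible first-order deformations to
\[
X^2 + \epsilon(a_1 XZ + a_2 YZ), \quad XY + \epsilon(b_1 XZ + b_2 YZ), \quad Y^2 + \epsilon(c_1 XZ + c_2 YZ),
\]
giving six parameters. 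In the chart $\check{Y} = 1$ around $[0:1:0]$, the complete intersection structure of $(\check{z}, \check{x}^3)$ yields six unconstrained parameters via $\check{z} \mapsto \check{z} + \epsilon u$ and $\check{x}^3 \mapsto \check{x}^3 + \epsilon v$, with $u, v \in \k[\check{x}]/(\check{x}^3)$.

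The subscheme $\CT$ is cut out inside this ambient by the equation $\Lambda_T^\dagger = \Lambda_{T^*}$ in $\Gr(3, H^0(\check{\P}^2, \O(2)))$. Writing $\phi \from T \mapsto \Lambda_T^\dagger$ and $\psi \from T^* \mapsto \Lambda_{T^*}$, both taking values in $T_\Lambda \Gr$ at the common value $\Lambda = \langle \check{X}\check{Z}, \check{Y}\check{Z}, \check{Z}^2 \rangle$, we have $T_{(T,T^*)}\CT = \ker(d\phi - d\psi)$. Computing $d\psi$ is straightforward: lifting each basis element $\check{X}\check{Z}, \check{Y}\check{Z}, \check{Z}^2$ into the perturbed ideal modulo $\epsilon^2$ and reducing modulo $\Lambda$ yields an explicit matrix depending only on $u$ (not on $v$), with $3$-dimensional image. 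Computing $d\phi$ proceeds in three stages: (i) the tangent to $T \mapsto \Lambda_T$, read off the perturbed generators (and having $[Z^2]$-column zero); (ii) the differential of the apolar-dual map, which is essentially a transposition under the apolar pairing; and (iii) the differential of the Jacobian-space construction, obtained by expanding the $2 \times 2$ minors of the Jacobian matrix of a perturbed basis of $\Lambda_{T_\epsilon}^*$ and extracting first-order terms modulo $\Lambda$.

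Once both differentials are written as explicit matrices in the chosen bases, $d\phi - d\psi$ becomes a concrete $9 \times 12$ matrix over $\k$, and the argument concludes with a rank verification showing that its kernel has dimension exactly $6$. The main obstacle is stage (iii) of the $d\phi$ computation: the Jacobian-space construction is nonlinear in its input net, and differentiating it requires careful bookkeeping of the first-order corrections through the nine scalar $2 \times 2$ minors of a $3 \times 3$ matrix of linear forms in $\check{X}, \check{Y}, \check{Z}$, followed by reduction modulo the $3$-dimensional base net $\Lambda$. This step is mechanical but lengthy; the subsequent rank check is then routine linear algebra.
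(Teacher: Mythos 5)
Your proposal follows exactly the paper's strategy: compute the Zariski tangent space at $(T,T^*)$ as the kernel of the linearization of $\Lambda_T^\dagger - \Lambda_{T^*}$ on the $12$-dimensional ambient tangent space, show it is $6$-dimensional, and conclude smoothness via \Cref{prop:irreducible}. The parameterization of first-order deformations on each Hilbert factor (the six syzygy-constrained parameters for $(X^2,XY,Y^2)$, the six free parameters for the complete intersection $(\check{Z},\check{X}^3)$), the observation that $d\psi$ factors through only part of the $T^*$-deformation, and the staged differentiation of $T \mapsto \Lambda_T \mapsto \Lambda_T^* \mapsto \jac(\Lambda_T^*)$ are all precisely what the paper carries out explicitly; you have correctly outlined the argument, leaving only the final explicit matrix and rank check, which the paper reduces to a transparent $6$-equation linear system.
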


\begin{proof}
  We will calculate the space of first order deformations of
  $(T, T^{*}) \in \CT$, and demonstrate that it is $6$ dimensional. This is enough by \Cref{prop:irreducible}.

  Let us pass to affine coordinates; we let
  $x = X/Z, y = Y/Z, a = \check{X}/\check{Y}, c = \check{Z}/\check{Y}$. The general first order
  deformation of the ideal $I = (x^{2},xy,y^{2})$ is given by

\begin{eqnarray}\label{eqn:deformI}
  I_{\varepsilon} := (x^{2} + \varepsilon(\alpha_{1}x + \beta_{1}y), xy + \varepsilon(\alpha_{2}x + \beta_{2}y), y^{2}+ \varepsilon(\alpha_{3}x+\beta_{3}y)),
\end{eqnarray}
for free choices of constants $\alpha_{i}, \beta_{j} \in \k$, while the
general first order deformation of $J = (a^{3}, c)$ is
\begin{eqnarray}\label{eqn:deformJ}
  J_{\varepsilon} = (a^{3} + \varepsilon(\gamma_{1}a^{2}+\gamma_{2}a+\gamma_{3}), c + \varepsilon(\delta_{1}a^{2}+\delta_{2}a + \delta_{3}))
\end{eqnarray}
where $\gamma_{i}$ and $\delta_{i}$ vary freely in $\k$.

From \eqref{eqn:deformI}, the corresponding first order deformation of
the induced net of conics
$\Lambda_{T} = \langle X^{2}, XY, Y^{2} \rangle$ is given by
\begin{eqnarray}\label{eqn:deformnet}
  \Lambda_{T}(\varepsilon) := \langle X^{2} + \varepsilon(\alpha_{1}XZ + \beta_{1}YZ), XY + \varepsilon(\alpha_{2}XZ + \beta_{2}YZ), Y^{2}+ \varepsilon(\alpha_{3}XZ+\beta_{3}YZ) \rangle.
\end{eqnarray}

Next, we must identify the first order deformation of the net of
conics $\Lambda_{T^{*}} = \langle \check{X}\check{Z}, \check{Y}\check{Z}, \check{Z}^{2} \rangle$ induced by
the deformation \eqref{eqn:deformJ}.  First, note that
$c^{2} \in J_{\varepsilon}$. Therefore, we must determine how the
conics $\check{X}\check{Z}$ and $\check{Y}\check{Z}$ must be deformed.  By homogenizing the element
$c + \varepsilon(\delta_{1}a^{2}+\delta_{2}a + \delta_{3}) \in
J_{\varepsilon}$, we get the deformation:
\begin{eqnarray*}
  \check{Y}\check{Z} + \varepsilon(\delta_{1}\check{X}^{2}+\delta_{2}\check{X}\check{Y} + \delta_{3}\check{Y}^{2}).
\end{eqnarray*}
Finally, by multiplying
$c + \varepsilon(\delta_{1}a^{2}+\delta_{2}a + \delta_{3})$ by $a$,
subtracting $\varepsilon \delta_{1} a^{3}$, then homogenizing we
deduce the following deformation:
\begin{eqnarray*}
  \check{X}\check{Z} + \varepsilon(\delta_{2}\check{X}^{2} + \delta_{3}\check{X}\check{Y})
\end{eqnarray*} Altogether, the corresponding first order deformation of the net $\Lambda_{T^{*}}$ is given by  
\begin{eqnarray}
  \label{deformLambdaprime}
  \Lambda_{T^{*}}(\varepsilon) := \langle \check{X}\check{Z} + \varepsilon(\delta_{2}\check{X}^{2} + \delta_{3}\check{X}\check{Y}), \check{Y}\check{Z} + \varepsilon(\delta_{1}\check{X}^{2}+\delta_{2}\check{X}\check{Y} + \delta_{3}\check{Y}^{2}), \check{Z}^{2} \rangle 
\end{eqnarray}

Now, we must impose the condition that the pair of deformations
$(I_{\varepsilon}, J_{\varepsilon})$ belong in $\CT$. By definition,
this is the condition
\begin{eqnarray}\label{eqn:ConditionCT}
  \Lambda_{T}(\varepsilon)^{\dagger} = \Lambda_{T^{*}}
\end{eqnarray}

The left hand side of \eqref{eqn:ConditionCT} is simple to calculate.  First, the deformation $\Lambda_{T}(\varepsilon)^{*}$ is given by: 

\begin{eqnarray*}
  \Lambda_{T}(\varepsilon)^{*} = \langle \check{X}\check{Z} + \varepsilon(-\frac{\alpha_{1}}{2} \check{X}^{2} - \alpha_{2}\check{X}\check{Y} - \frac{\alpha_{3}}{2} \check{Y}^{2}), \check{Y}\check{Z} + \varepsilon(-\frac{\beta_{1}}{2} \check{X}^{2}-\beta_{2}\check{X}\check{Y} - \frac{\beta_{3}}{2} \check{Y}^{2}), \check{Z}^{2} \rangle
\end{eqnarray*}
and so applying the Jacobian we get 

\begin{eqnarray*}
    \Lambda_{T}(\varepsilon)^{\dagger} = \langle \check{X}\check{Z} + \varepsilon(\beta_{2}\check{X}^{2} + (\alpha_{2}-\beta_{3})\check{X}\check{Y} + \alpha_{3}\check{Y}^{2}), \check{Y}\check{Z} + \varepsilon(\beta_{1}\check{X}^{2}+(\beta_{2}-\alpha_{1})\check{X}\check{Y} - \alpha_{2}\check{Y}^{2}), \check{Z}^{2} \rangle
\end{eqnarray*}
Equating term by term with \eqref{deformLambdaprime} we get the following system of equations:

\begin{eqnarray*}\label{SystemOfEquations}
  \delta_{2} =\beta_{2}\\
  \delta_{3} = \alpha_{2}-\beta_{3}\\
  0 = \alpha_{3}\\
  \delta_{1} = \beta_{1}\\
   \delta_{2} = \beta_{2}-\alpha_{1}\\
  \delta_{3} = -\alpha_{2}\\
\end{eqnarray*}

We note that this system of equations clearly cuts out a
$6$-dimensional space of solutions in the vector space
$\k\langle \alpha_{1}, \beta_{1}, \alpha_{2}, \beta_{2}, \alpha_{3},
\beta_{3}, \gamma_{1}, \gamma_{2}, \gamma_{3}, \delta_{1}, \delta_{2},
\delta_{3} \rangle$ -- indeed, we may freely choose
$\gamma_{1}, \gamma_{2}, \gamma_{3}, \delta_{1}, \delta_{2},
\delta_{3} $, after which the $\alpha$ and $\beta$ variables are
uniquely determined, concluding the proof.
\end{proof}

\begin{remark}
\label{remark:delta1transversality} In the setting of the proof of \Cref{prop:smoothpoint}, we note that the extra single condition $\delta_{1} = 0$ is equivalent to the condition that the $1$'st order deformation of the ideal $J_{\varepsilon}$ continues to contain a linear element.  But then the allowable parameters $\alpha_{1}, \beta_{1}, \alpha_{2}, \beta_{2}, \alpha_{3}, \beta_{3}$ determine a deformation of the ideal $I$ of the form $((x-\varepsilon \alpha)^{2}, (x - \varepsilon \alpha)(y - \varepsilon \beta), (y - \varepsilon \beta)^{2})$.  This is evidently the tangent space to $\Fat$ at the point $T$.

Furthermore, a deformation of $J$ with $\delta_{1} \neq 0$ induces a non-zero tangent vector of $T$ which is a nonzero normal vector to $\Fat$ at $T$.
\end{remark}

\begin{theorem}
  \label{thm:smoothCT}
  The scheme $\CT$ is smooth.
\end{theorem}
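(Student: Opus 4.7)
The plan is to combine the pointwise smoothness of \Cref{prop:smoothpoint} with upper semi-continuity of tangent space dimension and the $\PGL(3)$-equivariance of $\CT$. Let $U \subset \CT$ denote the smooth locus. Because $\CT_{\red}$ is irreducible of dimension $6$ by \Cref{prop:irreducible}, we have $\dim T_p \CT \geq 6$ at every $p \in \CT$, with smoothness at $p$ equivalent to equality. Upper semi-continuity then shows that $U$ is open, and the $\PGL(3)$-equivariance of the assignment $\Lambda \mapsto \Lambda^\dagger$ makes $U$ a $\PGL(3)$-stable open subset of $\CT$.

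The goal is to show $U = \CT$. Since $\PGL(3)$ acts on $\CT$ with only finitely many orbits -- indexed by the compatible pairs of $\PGL(3)$-orbits on $\Hilb_3 \P^2$ and $\Hilb_3 \check{\P}^2$ -- the complement $\CT \setminus U$ is a closed $\PGL(3)$-invariant subset which, if nonempty, must contain a closed $\PGL(3)$-orbit of $\CT$. It therefore suffices to exhibit one smooth point in each closed orbit.

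To enumerate the closed orbits, I would use \Cref{proposition:fatandtriple} to observe that an orbit $O \subset \CT$ is closed if and only if both projections to the two Hilbert schemes land in closed $\PGL(3)$-orbits, since degenerating either $T$ or $T^*$ produces a point of $\CT$ outside $O$. The closed orbits of $\Hilb_3 \P^2$ are $\Fat$ and the orbit of curvillinear length-three schemes on a line. Combined with the dichotomy ``$T$ fat $\iff T^*$ thin'' of \Cref{proposition:fatandtriple}, this leaves exactly two closed orbits in $\CT$: the type $(G,F)$ orbit, where $T$ is fat and $T^*$ is a triple point on the dual line, and the type $(F,G)$ orbit obtained by swapping the roles.

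The type $(G,F)$ orbit contains the point treated in \Cref{prop:smoothpoint}, so it lies in $U$. For the type $(F,G)$ orbit I would invoke the swap involution $(T, T^*) \mapsto (T^*, T)$ on $\CT$ -- an automorphism because $\Lambda \mapsto \Lambda^\dagger$ is itself an involution on nets of length-three schemes, which can be checked orbit-by-orbit along the lines of \Cref{example:coordinatepoints} and \Cref{example:fatpointZnet}. Transporting the smooth point of $(G,F)$ across this swap yields a smooth point of $(F,G)$. Alternatively one can simply repeat the first-order deformation computation of \Cref{prop:smoothpoint} at a chosen type $(F,G)$ point; the symmetry of the defining relation $\Lambda_T^\dagger = \Lambda_{T^*}$ makes this essentially the same calculation. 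I expect this verification -- either establishing the involutivity of $\dagger$, or redoing the tangent space count for a point of type $(F,G)$ -- to be the only place requiring genuine care.
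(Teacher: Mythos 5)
Your proof is correct and follows essentially the same strategy as the paper's: combine the tangent-space computation at the fat-point orbit from \Cref{prop:smoothpoint} with $\PGL(3)$-equivariance, the swap/flip symmetry between $\P^2$ and $\check{\P}^2$, upper semi-continuity, and irreducibility from \Cref{prop:irreducible}. Your reduction to the two closed orbits makes explicit what the paper compresses into the claim that $(T,T^*)$ of \Cref{prop:smoothpoint} or its symmetric flip lies in the closure of every orbit; this is a somewhat cleaner packaging of the same idea, and your observation that the involutivity of $\dagger$ is the one point requiring genuine verification matches what the paper leaves as ``easy to check.''
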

\begin{proof}
  We use upper semi-continuity, \Cref{prop:smoothpoint}, and
  \Cref{prop:irreducible}. The group $\PGL(3)$ acts on the scheme
  $\CT$.  Under this action, it is easy to check that either the point
  $(T, T^{*})$ from \Cref{prop:smoothpoint} or its symmetric flip
  (where we interchange $X,Y,Z$ with $\check{X},\check{Y},\check{Z}$ everywhere) is contained
  in the closure of every orbit.  We conclude that the tangent space
  is $6$-dimensional at every point in $\CT$ by semi-continuity, and
  conclude by \Cref{prop:irreducible}.
\end{proof}

\begin{corollary}\label{cor:closureofgraph}
  The space of complete triangles $\CT$ is the 
  closure (with reduced, induced scheme structure) of the graph of the
  triangle map
  \[
    \tau \from \Hilb_{3}\P^{2} \dashrightarrow \Hilb_{3}\check{\P}^{2}
  \]
  which assigns to a set of three distinct, non-collinear points
  $T \subset \P^{2}$, the set of lines spanned by pairs in $T$.
\end{corollary}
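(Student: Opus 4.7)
The plan is to deduce the corollary from three ingredients already at our disposal: the set-theoretic description of $\CT$ on the honest-triangle locus via \Cref{proposition:fatandtriple}(1), the irreducibility established in \Cref{prop:irreducible}, and the smoothness established in \Cref{thm:smoothCT}.

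First I would verify that the graph $\Gamma_{\tau}$ of $\tau$ sits inside $\CT$ as an open subscheme. Let $U \subset \Hilb_{3}\P^{2}$ denote the open locus of honest triangles. For $T \in U$, the computation in \Cref{example:coordinatepoints}, combined with $\PGL(3)$-equivariance of the construction $\Lambda \mapsto \Lambda^{\dagger}$, shows that $\Lambda_{T}^{\dagger} = \Lambda_{\tau(T)}$, so $(T, \tau(T)) \in \CT$. Conversely, \Cref{proposition:fatandtriple}(1) says that if $(T, T^{*}) \in \CT$ with $T$ an honest triangle, then $T^{*}$ is also an honest triangle whose points are the lines spanned by pairs of points of $T$, i.e.\ $T^{*} = \tau(T)$. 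Hence the preimage of $U$ under the first projection $\CT \to \Hilb_{3}\P^{2}$ is precisely $\Gamma_{\tau}$, and so $\Gamma_{\tau}$ is open in $\CT$.

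Next I would invoke irreducibility and dimension. By \Cref{prop:irreducible}, $\CT_{\red}$ is irreducible of dimension $6$, while $\Gamma_{\tau}$ is a nonempty open subscheme of $\CT$ (it projects isomorphically onto the $6$-dimensional dense open $U$). A nonempty open subset of an irreducible scheme is dense, so the closure $\overline{\Gamma_{\tau}}$ inside $\Hilb_{3}\P^{2} \times \Hilb_{3}\check{\P}^{2}$ equals $\CT_{\red}$ as a closed subset. Finally, \Cref{thm:smoothCT} implies $\CT$ is reduced, so $\CT = \CT_{\red} = \overline{\Gamma_{\tau}}$ with its reduced induced scheme structure.

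I don't anticipate a serious obstacle, since the corollary is essentially a packaging of results already obtained: smoothness, irreducibility, and the pointwise description on the honest-triangle locus. The only subtle point is the direction of \Cref{proposition:fatandtriple}(1) identifying the scheme-theoretic partner $T^{*}$ with the geometric $\tau(T)$ for honest triangles, which is precisely what ties the Jacobian-net definition of $\CT$ to the classical triangle map $\tau$.
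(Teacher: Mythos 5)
Your proposal is correct and takes essentially the same approach as the paper: the paper's proof is a one-line citation of \Cref{proposition:fatandtriple}(1), \Cref{prop:irreducible}, and \Cref{thm:smoothCT} (the last to ensure $\CT$ is reduced), and your argument simply unfolds that logic -- identifying $\Gamma_{\tau}$ with the open honest-triangle locus of $\CT$, using irreducibility to get density, and using smoothness to pass from $\CT_{\red}$ to $\CT$.
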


\begin{proof}
  This follows from part $(1)$ of \Cref{proposition:fatandtriple},
  \Cref{prop:irreducible}, and \Cref{thm:smoothCT}.  (The latter
  is needed to show that $\CT$ is reduced.)
\end{proof}

Let $\phi_{1} \from \CT \to \Hilb_{3}\P^{2}$ and
$\phi_{2} \from \CT \to \Hilb_{3}\check{\P}^{2}$ denote the forgetful maps.
\begin{corollary}
  \label{cor:blow upfat}
  The space $\CT$ is isomorphic to the blow up
  $\Bl_{\Fat}\Hilb_{3}\P^{2}$ and to the blow up
  $\Bl_{\Fat}\Hilb_{3}\check{\P}^{2}$. Under these isomorphisms, the
  forgetful maps $\phi_{i}$ are the respective blow down maps.
\end{corollary}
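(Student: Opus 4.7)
The plan is to verify the universal property of blow-up for $\phi_1 \from \CT \to \Hilb_3\P^2$ with center $\Fat$, and then to upgrade the resulting factorization $\CT \to \Bl_{\Fat}\Hilb_3\P^2$ to an isomorphism via Zariski's Main Theorem; the statement for $\phi_2$ will then follow by the symmetric role of $T$ and $T^*$ in \Cref{def:CT}.

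First I would record the easy properties of $\phi_1$: it is proper, since $\CT$ is closed in a product of projective Hilbert schemes; it is birational, by \Cref{cor:closureofgraph}; and by \Cref{proposition:fatandtriple}(4) it is bijective on closed points over $\Hilb_3\P^2 \setminus \Fat$. A proper, birational, bijective morphism to a smooth scheme is an isomorphism, so $\phi_1$ restricts to an isomorphism over this open locus.

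The crux is showing that $\phi_1^{-1}(\Fat)$ is a Cartier divisor in $\CT$, and I would establish the stronger claim that it is smooth of codimension one. The tangent space calculation in \Cref{prop:smoothpoint}, interpreted through \Cref{remark:delta1transversality}, shows that at the distinguished point $(T, T^*) \in \CT$ considered there, the tangent space to the scheme-theoretic preimage $\phi_1^{-1}(\Fat)$ is cut out inside the $6$-dimensional $T_{(T, T^*)}\CT$ by the single linear equation $\delta_1 = 0$, and is therefore $5$-dimensional. By \Cref{proposition:fatandtriple}(3), $\phi_1^{-1}(\Fat)$ is set-theoretically a $\P^3$-bundle over $\Fat \cong \P^2$, hence irreducible of dimension $5$. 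The distinguished $(T, T^*)$ lies in the unique closed $\PGL(3)$-orbit of $\phi_1^{-1}(\Fat)$ (the locus where $T^*$ is a triple point on the dual line of the support of $T$), so by upper semi-continuity of tangent dimension, every point of $\phi_1^{-1}(\Fat)$ has tangent space of dimension at most $5$; combined with the lower bound $\dim = 5$, equality holds everywhere. Hence $\phi_1^{-1}(\Fat)$ is smooth, and so a Cartier divisor in the smooth scheme $\CT$ (\Cref{thm:smoothCT}).

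By the universal property, $\phi_1$ factors as $\CT \xrightarrow{\widetilde{\phi}_1} \Bl_{\Fat}\Hilb_3\P^2 \to \Hilb_3\P^2$. The morphism $\widetilde{\phi}_1$ is proper, birational, and an isomorphism over $\Hilb_3\P^2 \setminus \Fat$; on exceptional divisors, it maps between two $\P^3$-bundles over $\Fat$, and a fiber-by-fiber check rooted in the same local analysis shows it is bijective on closed points. Since $\Bl_{\Fat}\Hilb_3\P^2$ is smooth and hence normal, Zariski's Main Theorem promotes $\widetilde{\phi}_1$ to an isomorphism. The corresponding statement for $\phi_2$ follows by symmetry: the involution $T \leftrightarrow T^*$ interchanges $\phi_1$ and $\phi_2$ in \Cref{def:CT}, and \Cref{proposition:fatandtriple}(3) identifies $\phi_2^{-1}(\Fat \subset \Hilb_3\check{\P}^2)$ with the thin locus in $\CT$, so the symmetric argument applies verbatim. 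The main obstacle throughout is the Cartier divisor step; once it is in place, the rest is essentially formal.
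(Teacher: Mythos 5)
Your proposal is correct and follows essentially the same route as the paper's proof: establish that $\phi_1^{-1}(\Fat)$ is a smooth (hence Cartier) divisor by combining the tangent-space computation of \Cref{remark:delta1transversality} with the $\PGL(3)$ action and upper semicontinuity, invoke the universal property of blow-ups to obtain the factoring morphism $\iota \from \CT \to \Bl_{\Fat}\Hilb_3\P^2$, and finish with Zariski's Main Theorem. You spell out a few more of the intermediate observations (the irreducibility and dimension of the preimage as a $\P^3$-bundle, the closed-orbit mechanism for semicontinuity), but the structure and key inputs are identical.
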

\begin{proof}
  We prove that $\CT \simeq \Bl_{\Fat}\Hilb_{3}\P^{2}$ -- the argument
  for $\Bl_{\Fat}\Hilb_{3}\check{\P}^{2}$ is the same.

  The projection $\phi_{1} \from \CT \to \Hilb_{3}\P^{2}$ is such that
  $\phi^{-1}(\Fat)$ has codimension $1$ in $\CT$, from the set-theoretic statement (3) in \Cref{proposition:fatandtriple}.  This preimage is also smooth, thanks to the observation in \Cref{remark:delta1transversality}, combining the $\PGL(3)$ action with upper semi-continuity.  
  
  The universal property of blow ups gives us an induced morphism
  $\iota \from \CT \to \Bl_{\Fat}\Hilb_{3}\P^{2}$ factoring
  $\phi_{1}$.  We conclude by applying  Zariski's Main Theorem to the
  morphism $\iota$.
\end{proof}

\subsection{Resolving powers} 
\label{sub:nthpowermaps}
Let $n \geq 2$ be an integer, and let
\[\mu_{n} \from \Hilb_{3}\P^{2} \dashrightarrow \Hilb_{3 {n+1 \choose
      2}}\P^{2}\] denote the rational map defined by sending an ideal
sheaf $\I$ to the ideal sheaf $\I^{n}$. The maps $\mu_{n}$ fail to be
defined along $\Fat \subset \Hilb_{3}\P^{2}$, since the colength of
$\I^{n}$ is ${2n+1 \choose 2}$ rather than $3 {n+1 \choose 2}$ when
$\I = \mathfrak{m}_{p}^{2}$. Here $\mathfrak{m}_{p} \subset \O_{\P^{2}}$
denotes the ideal sheaf of the point $p \in \P^{2}$. The excess in
colength is ${n \choose 2}$.  Conveniently, $\mu_{n}$ extends to a
morphism on $\CT$ for all $n$, as we now explain. Let
$$\gamma \from \Hilb_{3}\check{\P}^{2} \to \P H^{0}(\P^{2}, \O(3))$$ denote
the composition of the Hilbert-Chow morphism
$\Hilb_{3}\check{\P}^{2} \to \Sym^{3}\check{\P}^{2}$ with the natural
``multiplication'' morphism
\begin{eqnarray*}
  m \from \Sym^{3}\check{\P}^{2} \to \P H^{0}(\P^{2}, \O(3))
 \end{eqnarray*}
 induced by the $S_{3}$-equivariant ``multiplication of linear forms"
 map
 $$\check{\P}^{2} \times \check{\P}^{2} \times \check{\P}^{2} \to \P H^{0}(\P^{2}, \O(3)).$$
 We view points of $\P H^{0}(\P^{2}, \O(3))$ as cubic curves in
 $\P^{2}$.

 Therefore we see that every complete triangle $(T, T^{*}) \in \CT$ comes
 naturally equipped with a cubic curve $\gamma(T^{*}) \subset \P^{2}$
 which is a union of lines, possibly with multiplicities. If
 $(T, T^{*})$ is an honest triangle, then $\gamma(T^{*})$ is simply
 the triangle spanned by $T$ -- in particular, $\gamma(T^{*})$ gives a
 global section of $\I_{\Gamma}^{2}(3)$. If
 $(T, T^{*}) \in \CT$ is such that $T$ is a fat scheme,
 then $\gamma(T^{*})$ is an ``asterisk'' consisting of three lines
 passing through the support point of $T$.

\begin{theorem}
  \label{thm:resolventhpower}
  The rational map $\mu_{n}$ extends to a morphism on $\CT$ for all
  $n$.
\end{theorem}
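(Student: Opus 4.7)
The plan is to construct a flat family of length-$M$ subschemes of $\P^{2}$ over $\CT$, where $M = 3 \binom{n+1}{2}$, restricting to $T \mapsto V(\I_{T}^{n})$ over the open locus of honest triangles. By the universal property of $\Hilb_M \P^2$, such a family produces a morphism $\widetilde{\mu}_n \from \CT \to \Hilb_M \P^2$ extending $\mu_n$.

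Let $\mathcal{T} \subset \CT \times \P^2$ denote the pullback of the universal subscheme along $\phi_1$, with ideal sheaf $\I_{\mathcal{T}}$, and let $\mathcal{C} \subset \CT \times \P^2$ be the relative Cartier divisor cut out by the family of cubics $\gamma(T^*)$ described just before the theorem, with ideal sheaf $\I_{\mathcal{C}}$ (a line bundle, since the cubic is never the zero form). I propose
\begin{equation*}
\I^{(n)} \;:=\; \sum_{k=0}^{\lfloor n/2 \rfloor} \I_{\mathcal{C}}^{\,k} \cdot \I_{\mathcal{T}}^{\,n-2k} \;\subset\; \O_{\CT \times \P^2}
\end{equation*}
as the ideal sheaf of the universal family, and $\widetilde{\mu}_n$ as its classifying morphism.

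Two facts need checking. First, on the complement of the exceptional divisor $\phi_1^{-1}(\Fat)$ the summands with $k \geq 1$ are redundant: a $\PGL(3)$-equivariant inspection of the six non-fat orbits shows that $\gamma(T^*)$ locally lies in $\I_T^2$ there (for an honest triangle, the local equation at each vertex is the product of the two incident edges; analogous degenerate descriptions handle the other orbits). Hence $\I^{(n)} = \I_\mathcal{T}^n$ off the fat locus, and $\widetilde{\mu}_n$ agrees with $\mu_n$ on this dense open. Second, one must show that the fiber of $\O/\I^{(n)}$ at every point of $\CT$ has length exactly $M$. By $\PGL(3)$-equivariance this reduces to the fat case, where the ideal becomes $\sum_k (\ell_1 \ell_2 \ell_3)^k \cdot \m_p^{\,2n-4k}$ at the support point $p$, with $\ell_1, \ell_2, \ell_3$ the three lines through $p$ recorded by $T^*$. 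An associated-graded calculation in $\k[x,y]$ shows that in each degree $d$ with $\lceil 3n/2 \rceil \leq d < 2n$, the new contribution modulo $\m_p^{\,2n}$ is precisely the $(4d - 6n + 1)$-dimensional subspace $(\ell_1\ell_2\ell_3)^{2n-d} \cdot \k[x,y]_{d - 3(2n-d)}$; summing over $d$ yields exactly $\binom{n}{2}$, matching the required reduction from $\length(\O/\m_p^{\,2n}) = \binom{2n+1}{2}$ down to $M$.

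Once fiberwise length is constant, flatness of $\O/\I^{(n)}$ over $\CT$ follows by miracle flatness applied to the finite morphism $\Supp(\O/\I^{(n)}) \to \CT$, using smoothness of $\CT$ from \Cref{thm:smoothCT}. Combined with the universal property of $\Hilb_M \P^2$, this produces $\widetilde{\mu}_n$. The main obstacle is the fat-fiber colength bookkeeping: the choice of exponents $k = 0, 1, \dots, \lfloor n/2 \rfloor$ and twist pattern is engineered precisely to absorb the $\binom{n}{2}$ units of excess colength, and verifying this demands the careful graded-ring calculation sketched above. The orbit-by-orbit check at non-fat orbits is more routine, driven by the general principle that the triangle cubic $\gamma(T^*)$ vanishes to order at least two on $T$ unless $T$ is fat.
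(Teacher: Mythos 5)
Your proposal uses exactly the same modified ideal as the paper, namely $\sum_k \gamma^k \I_T^{n-2k}$ where $\gamma$ is the ideal of the cubic $\gamma(T^*)$, and the same two-step structure: agreement with $\mu_n$ away from $\Fat$, plus a colength check at fat complete triangles. The only substantive difference is that you actually carry out the colength computation that the paper explicitly leaves to the reader; I checked your associated-graded bookkeeping (the sum $\sum_{d=\lceil 3n/2\rceil}^{2n-1}(4d-6n+1) = \binom{n}{2}$, using that $\gamma(T^*)$ has order exactly $3$ at the support of the fat point) and it is correct. One small simplification worth noting: to conclude that $\widetilde\mu_n$ extends $\mu_n$ you only need $\gamma(T^*) \subset \I_T^2$ on a dense open, so the paper restricts the observation to honest triangles, whereas you assert it for all six non-fat orbits — true, but more than necessary; alternatively, once the colength of $\I^{(n)}$ is known to be $3\binom{n+1}{2}$ for all $(T,T^*)$, the equality $\I^{(n)} = \I_T^n$ off $\Fat$ follows automatically from $\I^{(n)} \supseteq \I_T^n$ and equality of colengths, with no orbit-by-orbit inspection.
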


\begin{proof} For simplicity of notation, we let $\gamma$ denote the
  ideal sheaf generated by the cubic $\gamma(T^{*})$. Consider the
  assignment
  \begin{eqnarray}
    \label{eqn:modifiedideal}
  (T, T^{*}) \mapsto \I_{T}^{n} + \gamma \I_{T}^{n-2} + \gamma^{2}\I_{T}^{n-4} + ...
\end{eqnarray}
where the sum ends at $\gamma^{n/2}$ or $\gamma^{(n-1)/2}\I_{T}$
depending on the parity of $n$. We leave it to the reader to check
that the ideal on the right side of \Cref{eqn:modifiedideal} has
the correct colength $3{n+1 \choose 2}$, even if $T$ is a fat
scheme. Hence, this assignment induces a morphism
$\CT \to \Hilb_{3 {n+1 \choose 2}}\P^{2}$.

Finally, if $(T, T^{*})$ is an honest triangle, then
$\gamma$ is already contained in $\I_{T}^{2}$ and therefore the
above assignment reduces to $\I_{T} \mapsto \I_{T}^{n}$,
 the $n$th power map, concluding the proof.
\end{proof}

\begin{corollary}
  \label{cor:AnySurface}
  Let $S$ be any smooth surface. Then the $n$th power map
  \[\mu_{n} \from \Hilb_{3}S \dashrightarrow \Hilb_{3 {n+1 \choose
      2}}S\] extends to  a regular map on $\Bl_{\Fat}\Hilb_{3}S$.
\end{corollary}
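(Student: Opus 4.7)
The plan is to reduce to \Cref{thm:resolventhpower} via étale-local modeling. The map $\mu_n$ is already regular on $\Hilb_3 S \setminus \Fat$, so the task is to construct its extension across the exceptional divisor of $\Bl_{\Fat}\Hilb_3 S$. Every point of that exceptional divisor lies over a fat scheme $\mathfrak{m}_p^2$ at some $p \in S$, and inspection of the ideal formula in the proof of \Cref{thm:resolventhpower} shows that the construction depends only on the local structure of $S$ near $p$ (the modified ideal is supported at the support of $T$).

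For each $p \in S$, pick a Zariski open $U \ni p$ together with an étale morphism $f \from U \to \A^2 \hookrightarrow \P^2$; such $f$ exists because any regular system of parameters at $p$ extends to an étale coordinate chart on a neighborhood. Formation of the Hilbert scheme is étale-local, and blowing up a smooth center commutes with étale base change, so we obtain an étale morphism
\[ \widetilde{f} \from \Bl_{\Fat}\Hilb_3 U \to \Bl_{\Fat}\Hilb_3 \P^2 = \CT. \]
Composing with the extension $\widetilde{\mu}_n \from \CT \to \Hilb_{3\binom{n+1}{2}}\P^2$ from \Cref{thm:resolventhpower}, and using that the resulting subscheme is set-theoretically supported in $f(U)$, étale descent along $f$ lifts the composition to a morphism
\[ \Bl_{\Fat}\Hilb_3 U \to \Hilb_{3\binom{n+1}{2}} U \subset \Hilb_{3\binom{n+1}{2}} S \]
that extends $\mu_n|_U$.

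To finish, I would check that these locally constructed extensions agree on a cover $S = \bigcup U_\alpha$. This is essentially automatic because the ideal $\I_T^n + \gamma \I_T^{n-2} + \gamma^2 \I_T^{n-4} + \cdots$ appearing in \Cref{thm:resolventhpower} is canonical: it is expressed intrinsically in terms of $\I_T$ and the auxiliary cubic $\gamma(T^*)$, and $T^*$ is in turn determined by $\I_T$ through the Jacobian net assignment $\Lambda_T^\dagger = \Lambda_{T^*}$, which is manifestly functorial on the surface. Hence two étale charts near $p$ produce the same ideal, and the local extensions glue.

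The main obstacle is the étale descent step: one must verify that the output of $\widetilde{\mu}_n \circ \widetilde{f}$ lies set-theoretically in $f(U)$, and that its scheme-theoretic pull-back along $f$ has the correct colength $3\binom{n+1}{2}$ to define a point of $\Hilb_{3\binom{n+1}{2}} U$. Both follow from a direct local analysis: the modified ideal in \Cref{thm:resolventhpower} is supported at the support of $T$ (so it pulls back cleanly through any étale neighborhood of that support), and étale maps are isomorphisms on completed local rings, so colength is preserved.
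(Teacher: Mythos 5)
Your proof follows the same route as the paper's three-line argument: reduce to \Cref{thm:resolventhpower} by choosing an \'etale chart $U \to \A^2 \subset \P^2$ around each point, transporting the modified-ideal construction, and gluing. You spell out the descent and gluing that the paper leaves implicit, and the essential point that makes it work---that the ideal $\I_T^n + \gamma\I_T^{n-2} + \cdots$ is determined by local data near the support of $T$, hence is independent of the chosen chart---is correctly identified (with the small caveat that the induced map $\Bl_{\Fat}\Hilb_3 U \to \CT$ is \'etale only on the open locus where $f$ restricts injectively to the length-$3$ subscheme, which suffices since that locus contains the entire exceptional divisor).
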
 
\begin{proof}
  It suffices to prove the statement  \'etale locally on $S$.
  However, \'etale locally $S$ is isomorphic to $\P^{2}$.  The result
  follows from \Cref{cor:blow upfat} and
  \Cref{thm:resolventhpower}.
\end{proof}

Returning to the overarching narrative, in order to count the number of points in $$\eta^{-1}(\{p_{1}, \dots, p_{13}\}) \subset \Hilb_{3}\P^{2}$$ we will use the squaring map $\mu_{2}$ to
produce a map between two vector bundles on $\CT$.  An
appropriate degeneracy scheme of this map will include
$\eta^{-1}(\{p_{1}, \dots, p_{13}\})$ along with undesirable excess. We will remove the excess in \Cref{sec:pencilspace} -- for now, we content ourselves with identifying the relevant vector bundles.

\subsection{Some vector bundles on CT}
\label{sec:vectorbundle}

We simplify notation and let
\[\s : \CT \to \Hilb_{9}\P^{2} \] denote the squaring map
$\mu_{2}$ from the previous section.  We let
\[\mathcal{Z} \subset \CT \times \P^{2}\] denote the universal length
$9$ subscheme induced by $\s$, and write $p_1, p_2$ for the projections of
$\CT \times \P^{2}$ to first and second factors, respectively.  Finally, we denote by $\mathcal{V}_{n}$ the sheaf 
\begin{align}
\label{Vn}
p_{1*}\left(\mathcal{I}_{\mathcal{Z}} \otimes p_{2}^{*}\O(n) \right).
\end{align}
\begin{proposition}
  \label{proposition:vectorbundle}
  $\mathcal{V}_{n}$  is
  a vector bundle on $\CT$ of rank ${n+2 \choose 2} - 9$ for all $n \geq 5$.
\end{proposition}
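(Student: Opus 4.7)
The plan is to exhibit $\mathcal{V}_{n}$ as the kernel of a morphism of vector bundles on $\CT$, and to reduce the vector bundle property to a fiberwise $H^{1}$-vanishing. By \Cref{thm:resolventhpower}, the squaring map $\s \from \CT \to \Hilb_{9}\P^{2}$ is a morphism, so $\mathcal{Z}$, being the pullback of the universal length-$9$ family, is finite and flat of degree $9$ over $\CT$. Push forward the tautological sequence
\[
0 \to \I_{\mathcal{Z}} \otimes p_{2}^{*}\O(n) \to p_{2}^{*}\O(n) \to \O_{\mathcal{Z}} \otimes p_{2}^{*}\O(n) \to 0
\]
along $p_{1}$. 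Flat base change gives $R^{i}p_{1*}\,p_{2}^{*}\O(n) = \underline{H^{i}(\P^{2}, \O(n))}$, which is trivial of rank $\binom{n+2}{2}$ for $i = 0$ and vanishes for $i \geq 1$ (since $n \geq 0$). Finite flatness of $\mathcal{Z}/\CT$ makes $p_{1*}(\O_{\mathcal{Z}} \otimes p_{2}^{*}\O(n))$ locally free of rank $9$ with vanishing higher direct images. The result is the four-term exact sequence
\[
0 \to \mathcal{V}_{n} \to \underline{H^{0}(\P^{2}, \O(n))} \to p_{1*}(\O_{\mathcal{Z}} \otimes p_{2}^{*}\O(n)) \to R^{1}p_{1*}(\I_{\mathcal{Z}} \otimes p_{2}^{*}\O(n)) \to 0.
\]

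If the rightmost term vanishes, then $\mathcal{V}_{n}$ is the kernel of a surjection of vector bundles, so itself a vector bundle of the stated rank $\binom{n+2}{2} - 9$. Cohomology and base change -- applicable since $H^{2}(\P^{2}, \I_{Z}(n)) = H^{2}(\P^{2}, \O(n)) = 0$ on every fiber -- identifies the fiber of this $R^{1}$ at $t \in \CT$ with $H^{1}(\P^{2}, \I_{Z_{t}}(n))$, so it suffices to show $H^{1}(\P^{2}, \I_{Z}(n)) = 0$ for every fiber $Z$ and every $n \geq 5$. The key geometric input is that $\I_{Z} \supset \gamma(T^{*})$ by the formula in \Cref{thm:resolventhpower}, so $Z$ is contained in the cubic $C := V(\gamma(T^{*}))$. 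The ideal-sheaf sequence
\[
0 \to \O_{\P^{2}}(n-3) \to \I_{Z}(n) \to \I_{Z/C}(n) \to 0,
\]
together with $H^{1}(\P^{2}, \O(n-3)) = 0$ for $n \geq 2$, reduces everything to $H^{1}(C, \I_{Z/C}(n)) = 0$, where $C$ is a Gorenstein arithmetic-genus-$1$ curve and $\I_{Z/C}(n)$ is a rank-$1$ torsion-free sheaf of Euler characteristic $3n - 9 \geq 6$.

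The main obstacle is this final vanishing in the non-generic cases: depending on $(T, T^{*})$, the cubic $C$ may be a triangle of lines, an asterisk of three concurrent lines, a conic plus a line, a double line plus a line, or a triple line, and $Z$ can be concentrated on the singular locus. I would carry out the check orbit-by-orbit, since $\CT$ has only finitely many $\PGL(3)$-orbits (refining the seven orbits of $\Hilb_{3}\P^{2}$ from \S \ref{sec:completetriangles}, further subdivided above $\Fat$ by the orbits on $\Hilb_{3}\check{\P}^{2}$). For each orbit, one writes $\I_{Z}$ and $C$ explicitly using \Cref{thm:resolventhpower} and the computations of \Cref{example:coordinatepoints}--\Cref{example:fatpointZnet}, then verifies the vanishing either by direct exhibition of nine independent images in $H^{0}(Z, \O_{Z}(n))$ coming from degree-$n$ monomials, or by further restricting to each irreducible component of $C_{\mathrm{red}}$, where one is reduced to the vanishing of $H^{1}$ of a positive-degree line bundle on $\P^{1}$.
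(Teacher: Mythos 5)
Your reduction is correct and ends at the same target the paper does --- namely, that $Z := \s(T,T^{*})$ must impose independent conditions on degree-$n$ forms, i.e.\ $H^{1}(\P^{2},\I_{Z}(n))=0$ for every fiber --- but you get there by a genuinely different route. The paper's proof invokes semi-continuity of $h^{1}$: it checks the single special complete triangle of \Cref{prop:smoothpoint} (fat $T$) together with its symmetric flip, concludes the vanishing for all $(T,T^{*})$ with $T$ non-thin, and then checks the thin case directly. You instead use the structural fact $Z\subset C:=V(\gamma(T^{*}))$ from \Cref{thm:resolventhpower} to pass to the ideal-sheaf sequence on the cubic and reduce to $H^{1}(C,\I_{Z/C}(n))=0$, then propose orbit-by-orbit verification. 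The cubic-$C$ reduction is a nice observation that the paper's sketch does not exploit, and it gives a more geometric reason for the vanishing; but you end up carrying the case analysis over all orbits of $\CT$, whereas the paper's semi-continuity trick compresses the verification to two special points plus the thin locus. Two small corrections: since $\gamma(T^{*})$ is always a product of three linear forms, the cubic $C$ is never a ``conic plus a line''; and calling $\I_{Z/C}(n)$ ``rank-$1$ torsion-free'' is a bit loose on the non-reduced cubics (multiple lines), though this is harmless since in those cases you fall back on directly exhibiting nine independent conditions rather than leaning on any general positivity criterion for $H^{1}$. Also be aware that your second proposed verification method (restricting to components of $C_{\red}$) does not straightforwardly handle the non-reduced cubics and the cases where $Z$ sits on the singular locus of $C$; the first method (direct exhibition of nine independent images) is the one that reliably closes every case.
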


\begin{proof}
  The claim being made is that every length $9$ scheme of the form  $\s(T,T^{*})$
  imposes independent conditions on degree $n \geq 5$ homogeneous
  forms. This can either be
  checked on a case-by-case basis, or can be checked for example on
  the special complete triangle $(T,T^{*})$ in
  \Cref{prop:smoothpoint}, as well as its dual. Then an appeal to semi-continuity gives
  the conclusion for any complete triangle $(T,T^{*})$ where $T$ is
  not contained in a line.  For those complete triangles $(T,T^{*})$
  with $T$ contained in a line, the claim can be checked directly.
\end{proof}

\begin{lemma}
    \label{lemma:generatedquintics} Let  $(T,T^{*}) \in \CT$ be any point such that $T \notin \Thin$. Then the sheaf $\I_{\s(T,T^{*})}(n)$ is generated by global sections for $n \geq 4$, i.e. the scheme defined by the common vanishing of all forms in $H^{0}(\P^{2},\I_{\s(T,T^{*})}(n))$ is precisely $\s(T,T^{*})$ when $n \geq 4$.
\end{lemma}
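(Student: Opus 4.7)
The plan is to reduce to the case $n = 4$ and then argue orbit-by-orbit.

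First, I would reduce to the case $n = 4$. Since $\I_{\s(T,T^{*})}(n) \simeq \I_{\s(T,T^{*})}(4) \otimes \O_{\P^{2}}(n-4)$ and the line bundle $\O_{\P^{2}}(n-4)$ is globally generated for $n \geq 4$, and since the tensor product of two globally generated sheaves is globally generated, proving global generation for $n = 4$ implies the statement for all $n \geq 4$.

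Second, I would invoke $\PGL(3)$-equivariance of both the squaring map $\s$ and the ideal sheaf assignment $(T,T^{*}) \mapsto \I_{\s(T,T^{*})}(4)$ to reduce to verifying the statement at one representative of each $\PGL(3)$-orbit of $\CT$ above $\Hilb_{3}\P^{2} \setminus \Thin$. By the orbit decomposition in \S\ref{sec:completetriangles}, these orbits lie above the $\Hilb_{3}\P^{2}$-orbits (A), (C), (E), (G), and are four in number.

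Third, for each representative I would write down $\s(T,T^{*})$ explicitly via the formula $\I_{\s(T,T^{*})} = \I_{T}^{2} + (\gamma(T^{*}))$ from \Cref{thm:resolventhpower} and exhibit enough quartic global sections to generate the ideal sheaf in every stalk. For orbit (A) with $T$ the three coordinate vertices, $\s(T,T^{*}) = \I_{T}^{2}$, and the six quartic monomials $X^{2}Y^{2}, Y^{2}Z^{2}, X^{2}Z^{2}, X^{2}YZ, XY^{2}Z, XYZ^{2}$ generate $\I_{T}^{2}$ in every stalk. For orbit (G) with $T$ the fat scheme at $p = [0:0:1]$, the scheme $\s(T,T^{*})$ is the zero locus of $\m_{p}^{4} + (\gamma)$, where $\gamma$ is the cubic cut out by three lines through $p$; the monomials $X^{4}, X^{3}Y, X^{2}Y^{2}, XY^{3}, Y^{4}$ generate the $\m_{p}^{4}$-part of the stalk at $p$, while $\gamma \cdot L$ for any linear $L$ with $L(p) \neq 0$ produces the class of $\gamma$ in the stalk modulo $\m_{p}^{4}$; away from $p$ the quartics $X^{4}, Y^{4}$ witness that $\I_{\s(T,T^{*})}$ is the unit ideal. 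Orbits (C) and (E) would be handled by analogous explicit local computations at each support point of $T$.

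The main obstacle is the fat orbit (G), where $\s(T,T^{*})$ is concentrated at a single point and the local ideal $\m_{p}^{4} + (\gamma)$ mixes the pure $\m^{4}$-generators with the extra cubic $\gamma$. The delicate point is to confirm that the available quartic sections—monomials of degree four plus multiples of $\gamma$ by a non-vanishing linear form—generate both parts simultaneously in the stalk at $p$. Once (G) is settled, the remaining orbits decompose locally at each support point into direct analogues of (A) and (G), making those cases essentially routine.
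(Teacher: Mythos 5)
Your proposal is correct, but it takes a genuinely different (and more labor-intensive) route than the paper. The paper's proof is a two-line argument: it verifies the claim at a \emph{single} complete triangle $(T_0,T_0^*)$ --- the one with $T_0 = V(x^2,xy,y^2)$ fat and $T_0^*$ the triple point on the dual line --- obtaining the local ideal $(x^4,x^3y,x^2y^2,xy^3,y^4,x^3)$, which is visibly generated in degree $\leq 4$; it then concludes by combining the openness of the globally-generated locus with the fact that this particular fat-triple orbit lies in the closure of every $\PGL(3)$-orbit of $\CT$ lying over $\Hilb_3\P^2 \setminus \Thin$ (``isotrivial specialization'' plus semi-continuity). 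Your orbit-by-orbit approach avoids this closure-of-orbits argument entirely, at the price of several explicit checks. A few comments on those checks. First, your count of ``four'' orbit representatives is off: the exceptional divisor over $\Fat$ carries \emph{three} $\PGL(3)$-orbits in $\CT$ (according as $T^*$ is three distinct points, a $2{+}1$, or a triple point on the dual line), so there are six orbits over $\Hilb_3\P^2\setminus\Thin$; however, your orbit-(G) argument works uniformly in $\gamma(T^*)$, so this is only a bookkeeping slip. Second, and more substantively, the orbit-(E) case is less ``routine'' than you suggest: with $T$ curvilinear and non-thin, one has $\I_T = (y - x^2, x^3)$ locally, and $\I_T^2 = \big((y-x^2)^2,\, x^3(y-x^2),\, x^6\big)$ contains a degree-6 monomial generator. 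The global quartics in $\I_T^2$ do nevertheless generate this stalk --- one checks that $(y-x^2)^2$, $xy(y-x^2) = x(y-x^2)^2 + x^3(y-x^2)$ and $y^3 \equiv x^6 \pmod{\big((y-x^2)^2, x^3(y-x^2)\big)}$ do the job --- but this computation uses the quadratic shift $y - x^2$ (i.e.\ non-thinness) in an essential way; the analogous check with $\I_T = (y, x^3)$ fails, which is exactly why the lemma excludes $\Thin$. You should spell this out rather than declare it routine. With these points addressed, your proof is a valid and more self-contained alternative to the paper's specialization argument.
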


\begin{proof}
    By combining an isotrivial specialization using the $\PGL(3)$ action with semi-continuity, we need only verify this for the pair $(T,T^{*})$ where, in affine coordinates $(x,y)$, $T$ is given by $V(x^2,xy,y^2)$ and $T^{*}$ defines the ``three" concurrent lines consisting of $V(x)$  reckoned 3 times.  Here, the ideal of $\s(T,T^{*})$ is \[(x^{4},x^{3}y,x^{2}y^{2},xy^{3},y^{4},x^3),\] which is clearly generated by polynomials of degree $\leq n$ once $n \geq 4$, proving the result.
\end{proof}

\subsection{A Chern class calculation using localization on $\CT$}
\label{sec:Chern}
Maintain the notation in \Cref{sec:vectorbundle} and define 
\[ E = {p_1}_* \left(\O_{\mathcal{Z}} \otimes p_2^* \O(5)\right).\] Then $E$ is a vector bundle of rank
$9$ on $\CT$ -- indeed, the whole point of introducing $\CT$ in the first
place was to be able to invoke the bundle $E$.

The objective of this section is to evaluate the integer
\[  c_{3}^2(E) - c_{2} (E) c_{4}(E) \in H^6(\CT, \Z) = \Z\] using the
technique of localization.

We recall the localization formula in the context of enumerative
geometry from \cite{ell.str}.  Let $X$ be a smooth projective variety
of dimension $n$ with a $\Gm$ action.  Let $F \subset X$ be the set of
fixed points, and assume that $F$ is finite.  Let $E$ be an
equivariant vector bundle on $X$.  Let $p(c_1, c_2, \dots)$ be a
polynomial in formal variables $c_1, c_2, \dots$.  Assume that $p$ is
weighted homogeneous of degree $n$ when the variable $c_i$ is given
weight $i$.  The goal generally is to compute
\[ p(c_1(E), c_2(E), \dots ) \in H^{2n}(X, \Z) = \Z.\] For $x \in F$,
let $\sigma_i(E, x)$ be the value of the $i$th
elementary symmetric polynomial in the weights of the $\Gm$ acting on
$E|_x$.  Set $f(E, x) = p(\sigma_1(E, x), \sigma_2(E,x), \dots)$.
\begin{theorem}[{Bott's localization formula
    \cite[Theorem~2.2]{ell.str}}]
  \label{thm:localization}
  We have the equality
  \[ p(c_1(E), c_2(E), \dots ) = \sum_{x \in F} \frac{f(E, x)}{\sigma_n(T_X, x)}. \]
\end{theorem}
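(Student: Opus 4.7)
The plan is to deduce the formula from the Atiyah-Bott--Berline-Vergne localization theorem in $\Gm$-equivariant cohomology, working over the base ring $H^{*}_{\Gm}(\pt) = \Z[t]$. First I lift the ingredients: since $E$ is $\Gm$-equivariant, it has equivariant Chern classes $c_{i}^{\Gm}(E) \in H^{2i}_{\Gm}(X)$ whose images under the forgetful map $H^{*}_{\Gm}(X) \to H^{*}(X)$ are the ordinary $c_{i}(E)$. I then form the class $\alpha := p(c_{1}^{\Gm}(E), c_{2}^{\Gm}(E), \dots) \in H^{2n}_{\Gm}(X)$, an equivariant lift of the class to be evaluated.

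The core step invokes the Atiyah-Bott--Berline-Vergne theorem: after inverting the non-zero homogeneous elements of $\Z[t]$, restriction to the fixed locus becomes an isomorphism, and every equivariant class decomposes as
\[ \alpha = \sum_{x \in F} (i_{x})_{*} \left( \frac{(i_{x})^{*} \alpha}{e^{\Gm}(N_{x/X})} \right). \]
Pushing forward along $\pi \from X \to \pt$ and using that $\pi \circ i_{x}$ is the identity on $\pt$ gives
\[ \pi_{*}\alpha = \sum_{x \in F} \frac{(i_{x})^{*}\alpha}{e^{\Gm}(N_{x/X})}, \]
and since $\alpha$ lifts a top-degree non-equivariant class, the left-hand side coincides with the integer $p(c_{1}(E), c_{2}(E), \dots)$ that I wish to compute.

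It remains to identify the fixed-point contributions concretely. At an isolated fixed point $x \in F$, the fiber $E|_{x}$ splits as a direct sum of one-dimensional $\Gm$-eigenspaces with weights $w_{1}(E,x), \dots, w_{r}(E,x)$, so that $(i_{x})^{*} c_{j}^{\Gm}(E) = e_{j}(w_{1}, \dots, w_{r}) = \sigma_{j}(E,x)$; hence $(i_{x})^{*}\alpha = f(E,x)$. Likewise $N_{x/X} = T_{X}|_{x}$ splits into $n$ weight lines whose product of weights equals $\sigma_{n}(T_{X}, x)$, giving $e^{\Gm}(N_{x/X}) = \sigma_{n}(T_{X}, x)$. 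Substituting into the previous display yields the stated formula.

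The principal obstacle is the localization theorem itself. One establishes it by showing that $H^{*}_{\Gm}(X \setminus F)$ is torsion as a $\Z[t]$-module (since $\Gm$ acts with finite stabilizers on $X \setminus F$, so its equivariant cohomology reduces to that of a quotient with vanishing equivariant parameter), then using the long exact sequence of the pair $(X, X \setminus F)$ to conclude that, after localization, the pushforward $i_{*}$ is surjective with known kernel, producing the inverse expression above. A secondary subtlety worth flagging is that the individual summands on the right-hand side are rational functions in the equivariant parameter $t$, so the theorem simultaneously asserts the non-trivial integrality that these rational functions sum to a constant integer independent of $t$; this is a useful sanity check when implementing the formula.
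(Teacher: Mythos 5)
The paper does not prove this statement: it is cited verbatim from \cite[Theorem~2.2]{ell.str} and used as a black box, so there is nothing in the paper itself to compare your argument against. Taken on its own, your proposal is a correct and standard outline of how Bott's residue formula is derived from the Atiyah--Bott/Berline--Vergne localization theorem. You lift the Chern polynomial to an equivariant class $\alpha \in H^{2n}_{\Gm}(X)$, apply the localization isomorphism to write $\alpha$ as a sum of pushforwards from the isolated fixed points weighted by inverse equivariant Euler classes, push forward to a point (noting that $\pi_{*}\alpha$ lands in $H^{0}_{\Gm}(\pt) = \Z$, so it equals the non-equivariant integral), and then identify $(i_{x})^{*} c_{j}^{\Gm}(E)$ with $\sigma_{j}(E,x)$ and $e^{\Gm}(N_{x/X}) = e^{\Gm}(T_{X}|_{x})$ with $\sigma_{n}(T_{X},x)$ via the weight decompositions of $E|_{x}$ and $T_{X}|_{x}$.

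The one place you gloss is the torsion step. It is not quite that the equivariant cohomology of $X \setminus F$ ``reduces to that of a quotient with vanishing equivariant parameter''; the cleaner statement is that for any orbit $\Gm/H$ with $H$ finite one has $H^{*}_{\Gm}(\Gm/H) \cong H^{*}(BH)$, on which the image of $t \in H^{2}(B\Gm)$ acts nilpotently (rationally it acts by zero since $H^{>0}(BH;\mathbb{Q}) = 0$), and one then propagates this torsion property to $H^{*}_{\Gm}(X \setminus F)$ by Mayer--Vietoris or a spectral sequence for a $\Gm$-invariant stratification. This is a refinement rather than a gap, and your closing remark about the summands being rational functions in $t$ whose sum is a $t$-independent integer is indeed the right sanity check; your overall outline is sound.
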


We now compute the ingredients of the right hand side in
\Cref{thm:localization} for $X = \CT$.  Put homogeneous coordinates
$[X:Y:Z]$ on $\P^2$.  Consider the $\Gm$ action on the $3$-dimensional vector space $\langle X, Y, Z \rangle$ by
\[ t \cdot (X, Y, Z) = (t^a X, t^b Y, t^c Z),\] where
$(a,b,c) \in \Z^3$ are distinct, general integers.  This action of $\Gm$ on
$\P^2$ induces compatible actions on $\check{\P}^{2}$, $\O_{\P^2}(n)$, $\Hilb_3 \P^2$, $\CT$, and
$E$.  Observe that $S_3$ acts on $(X,Y,Z)$ by permutations, and accordingly on $a,b,c$.

\subsection{The 31 fixed points of the $\Gm$ action on $\CT$}
We now list the $S_3$-orbits of the $31$ fixed points of the $\Gm$ action on
$\CT$.  The format is $(I; J)$, where $I \subset k[X,Y,Z]$ is the homogeneous ideal
of a length 3 scheme and $J \subset k[\check{X},\check{Y},\check{Z}]$ is a an ideal describing a length $3$ subscheme of $\check{\P}^{2}$.
\begin{enumerate}
\item $(XY, YZ, XZ; \check{X} \check{Y},\check{X} \check{Z},\check{Y} \check{Z})$ --- the unique 1+1+1 configuration,
\item $(XY, XZ, Y^2; \check{X}\check{Y}, \check{X}\check{Z}, \check{Z}^2)$ and its 6 permutations -- non-linear 1+2 configuration,
\item $(X, Y^2Z; \check{Y}^2 ,\check{Y}\check{Z} , \check{Z}^2)$ and its 6 permutations -- linear 1+2 configuration, 
\item $(X, Y^3; \check{Y}^2 ,\check{Y}\check{Z} , \check{Z}^2)$ and its 6 permutations -- linear 3 configuration,
\item $(X^2, XY, Y^2; \check{Z}, \check{Y}^{3})$ and its 6 permutations -- fat point (non-linear 3 configuration),
\item $(X^2, XY, Y^2; \check{Z}, \check{Y}^{2}\check{X})$ and its 6 permutations -- fat point (non-linear 3 configuration).
\end{enumerate}

This (1)-(6) ordering of $S_3$ orbit representatives will be systematically adhered to for all the following weight computations, including throughout the \texttt{sage} code in \S \ref{sec:sagecode}.

\begin{proposition}
The weights of the $\Gm$ action on $E$ at the fixed points of type (1)-(6) are:
\begin{enumerate}
\item $(5a, 5b, 5c, 4a+b, 4a+c, 4b+a, 4b+c, 4c+a, 4c+b)$,
\item $(5a, 4a+b, 4a+c, 5c, 4c+a, 4c+b, 3c+a+b, 3c+2b, 2c+3b)$ and its 6 permutations,
\item $(5b, 4b+a, 4b+c, 5c, 4c+a, 4c+b, 3c+a+b, 3c+2b, 2c+3b)$ and its 6 permutations,
\item $(5c, 4c+b, 3c+2b, 2c+3b, c+4b, 5b, 4c+a, 3c+a+b, 2c+a+2b)$ and its 6 permutations,
\item $(5c, 4c+a, 3c+2a, 4c+b, 3c+a+b, 2c+2a+b, 3c+2b, 2c+2b+a, 2c+3b)$ and its 6 permutations,
\item $(5c, 4c+a, 3c+2a, 2c+3a, 4c+b, 3c+a+b, 3c+2b, 2c+2b+a, 2c+3b)$ and its 6 permutations.
\end{enumerate}
\end{proposition}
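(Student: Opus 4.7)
At each of the $31$ $\Gm$-fixed points $(T, T^{*}) \in \CT$, both ideals $\I_{T}$ and $\I_{T^{*}}$ are monomial, so the cubic form $\gamma(T^{*}) \in H^{0}(\P^{2}, \O(3))$ produced by the multiplication map $m$ is a monomial, and by \Cref{thm:resolventhpower} the length-$9$ ideal
\[
\I_{\s(T, T^{*})} \;=\; \I_{T}^{2} \,+\, \bigl(\gamma(T^{*})\bigr)
\]
is also monomial. Consequently the fibre $E|_{(T, T^{*})} = H^{0}(\P^{2}, \O_{\s(T, T^{*})}(5))$ splits as a direct sum of one-dimensional $\Gm$-eigenspaces, one for each degree-$5$ monomial $X^{i}Y^{j}Z^{k}$ representing a nonzero class in the quotient, with weight $ia + jb + kc$. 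The whole task is therefore to exhibit nine such monomials for each of the six $S_{3}$-orbit representatives and to read off the corresponding weights.

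I would proceed orbit by orbit. First, $\gamma(T^{*})$ is extracted from the ideal of $T^{*}$ by composing Hilbert-Chow with multiplication of dual linear forms: one obtains $\gamma = XYZ$ for orbit $(1)$; $\gamma = XY^{2}$ for orbit $(2)$; $\gamma = X^{3}$ for orbits $(3)$, $(4)$, and $(5)$; and $\gamma = X^{2}Y$ for orbit $(6)$. Next, I would compute $\I_{T}^{2} + (\gamma)$, reduce it to its minimal monomial generating set, and dehomogenize at each support point of $\s(T, T^{*})$ so that the ideal becomes a monomial ideal in two local variables. Listing the monomials below the resulting staircase and then rehomogenizing them by multiplying by the appropriate power of the chart coordinate produces the desired nine degree-$5$ monomials in $X, Y, Z$, and summing the weights $ia + jb + kc$ recovers the lists claimed in the statement. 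The $S_{3}$-translates of each orbit then follow formally by permuting $(a, b, c)$ together with the variables, since the entire construction is $S_{3}$-equivariant.

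The only step needing genuine care is the distinction between orbits $(5)$ and $(6)$: both have the same fat $T$, so $\I_{T}^{2}$ alone has colength only $6$, and the cubic correction $\gamma(T^{*})$ is what produces the full length-$9$ scheme. The two different choices $X^{3}$ and $X^{2}Y$ deliver genuinely different length-$9$ subschemes lying over a single fat point, and hence two distinct fibres of $E$ with distinct weight lists -- this is exactly the separating role of \Cref{thm:resolventhpower} and of the space $\CT$ itself. Once the correct cubic is plugged in, however, each of these cases collapses to the same routine staircase computation as the other four orbits, and the weight tuples listed in $(1)$--$(6)$ fall out directly.
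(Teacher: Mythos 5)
Your approach is correct and is essentially the same as the paper's: dehomogenize in the chart around each support point of $\s(T,T^*)$, observe that the local ideal is monomial, read the nine basis monomials off the staircase, rehomogenize, and take the weights $ia+jb+kc$. The paper illustrates this only for orbit $(2)$ and leaves the rest to the reader, whereas you explicitly isolate the one genuine wrinkle—that for the two fat-point orbits $(5)$ and $(6)$ the cubic $\gamma(T^{*})$ from \Cref{thm:resolventhpower} is what distinguishes the fibres—which is a welcome addition but not a different method.
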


\begin{proof}
In order to indicate how to perform these computations, we will do the case 
(2) as an example -- the reader can then check that no new complications arise in the general calculation.  

Note that in the case (2), the image in
$\Hilb_9 \P^2$ is cut out simply by the square of the ideal; the cubic
 $XY^{2}$ is redundant information.  The length $3$ scheme
$T$ cut out by $\langle XY, XZ, Y^2 \rangle$ is supported at $[1:0:0]$
and $[0:0:1]$.  Near $[1:0:0]$, we can use affine coordinates
$y = Y/X$ and $z = Z/X$.  In these coordinates, the ideal is $(y, z)$,
and its square is $(y^2, yz, z^2)$.  The section $X^5$ of
$\O_{\P^2}(5)$ is $\Gm$ equivariant and non-vanishing at $[1:0:0]$.
We thus get a $\Gm$ equivariant basis of
$H^0(T \cap \{X \neq 0\}, \O_{\s(T,T^{*})}(5))$ given by
\[ X^5 \langle 1, y, z \rangle.\] The corresponding weights are
$5a, 4a+b, 4a+c$.  Near $[0:0:1]$, we can use affine coordinates
$x = X/Z$ and $y = Y/Z$.  In these coordinates, the ideal is
$(x, y^2)$, and its square is $(x^2, xy^2, y^4)$.  The section $Z^5$
of $\O_{\P^2}(5)$ is $\Gm$ equivariant section and non-vanishing at
$[0:0:1]$.  We thus get a $\Gm$ equivariant basis of
$H^0(T \cap \{Z \neq 0\} , \O_{\s(T,T^{*})}(5))$ given by
\[ Z^5 \cdot \langle 1, x, y, xy, y^2, y^3 \rangle.\]
The corresponding weights are $5c, 4c+a, 4c+b, 3c+a+b, 3c+2b, 2c+3b$.
Combining the contributions from $[1:0:0]$ and $[0:0:1]$, we get the full set of weights.
\end{proof}

\begin{proposition}
The weights of the $\Gm$ action on the tangent bundle at the
 fixed points of type (1) - (6) are:
\begin{enumerate}
\item $(c-a, c-b, b-c, b-a, a-b, a-c)$,
\item $(a-c, a-b, c-a, 2c-2b, b-a, c-b)$, and its six permutations,
\item $(c-a, 2c-2b, b-a, c-b, b-c, b-a)$, and its six permutations,
\item $(c-a, 3c-3b, b-a, 2c-2b, 2b-c-a, c-b)$, and its six permutations,
\item $(3b-3a, 2b-2a, b-a, c-a, c-b, a-2b+c)$, and its six permutations,
\item $(a-b, c-b, c-a, 2b-2a, c-b, b-a)$, and its six permutations.
\end{enumerate}
\end{proposition}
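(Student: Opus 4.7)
The plan is to exploit \Cref{cor:blow upfat} to reduce all six weight computations to tangent-space calculations on a Hilbert scheme of three points. For fixed-point types (1)--(4), the scheme $T$ is not fat, so the forgetful map $\phi_{1}\from \CT \to \Hilb_{3}\P^{2}$ is a local isomorphism at $(T, T^{*})$ and one has $T_{(T, T^{*})} \CT \cong T_{T} \Hilb_{3}\P^{2}$ as $\Gm$-representations. For types (5)--(6), the scheme $T$ is fat but $T^{*}$ is thin, so the dual forgetful map $\phi_{2}\from \CT \to \Hilb_{3}\check{\P}^{2}$ is the local isomorphism and $T_{(T,T^{*})} \CT \cong T_{T^{*}} \Hilb_{3}\check{\P}^{2}$. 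In either case, the tangent space to the Hilbert scheme decomposes as a direct sum over the support points of the length-three scheme, each summand being the punctual tangent space $\Hom_{R/I}(I/I^{2}, R/I)$ for the corresponding local ideal $I$.

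To assign weights I will use the rule that a $\Gm$-equivariant homomorphism sending a generator $g$ of $I/I^{2}$ of weight $w_{g}$ to an element $h \in R/I$ of weight $w_{h}$ has weight $w_{h} - w_{g}$, and I will compute a basis of $\Hom_{R/I}(I/I^{2}, R/I)$ by choosing generators of $I/I^{2}$ and sending each in turn to a monomial basis vector of $R/I$. The answer for type (1) --- the union of the tangent weights $\{a-b, a-c\}$, $\{b-a, b-c\}$, $\{c-a, c-b\}$ at the three coordinate points of $\P^{2}$ --- fixes the sign convention. For $\check{\P}^{2}$ calculations I will use the dual $\Gm$-action under which $\check{X}, \check{Y}, \check{Z}$ carry weights $-a, -b, -c$, as dictated by $\Gm$-invariance of the incidence pairing $\check{X}X + \check{Y}Y + \check{Z}Z$.

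With conventions fixed, each of the six items becomes a direct computation. For instance, in type (2) the scheme cut out by $(XY, XZ, Y^{2})$ splits as a reduced point at $[1:0:0]$ (contributing tangent weights $a-b, a-c$ of $\P^{2}$) and a length-$2$ subscheme at $[0:0:1]$ with local ideal $(x, y^{2})$ (contributing the four additional weights $c-a, b-a, 2c-2b, c-b$ obtained as differences of the monomial weights of $R/I = k[y]/(y^{2})$ and those of the generators $x, y^{2}$ of $I/I^{2}$). Types (5) and (6) are handled dually by computing $\Hom_{R/I}(I/I^{2}, R/I)$ for the thin ideals $(\check{Z}, \check{Y}^{3})$ and $(\check{Z}, \check{X}\check{Y}^{2})$ in $\check{\P}^{2}$, using the dual weight assignments at the supports $[1:0:0]$ and (for type (6)) $[0:1:0]$.

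The main obstacle is purely bookkeeping: one must keep the sign conventions for the tangent bundle, the dual $\Gm$-action on $\check{\P}^{2}$, and the weight of an equivariant Hom consistent across all six types, and correctly identify the local ideals at non-reduced support points. Once the conventions are pinned down, each individual weight calculation is elementary, and the proof reduces to six parallel tabulations whose outputs follow directly from the local ideals listed in the enumeration.
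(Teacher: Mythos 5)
Your proposal is correct and follows essentially the same route as the paper: both use $\phi_1$ (resp. $\phi_2$) as a local isomorphism to $\Hilb_3\P^2$ (resp. $\Hilb_3\check\P^2$) depending on whether $T$ is fat, compute the Hilbert-scheme tangent space as the Hom module $\Hom(I, \O/I)$ split over the support points, and use the dual weight convention on $\check\P^2$. (Incidentally, your worked example correctly labels the scheme $(XY,XZ,Y^2)$ as type (2), whereas the paper's proof misattributes that same computation to type (3).)
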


\begin{proof}
To see how these are calculated, we consider two representative examples: (3) and (6).  

At a complete triangle of type (3), the
map $\varphi_{1}: \CT \to \Hilb_3 \P^2$ is a local isomorphism.
Therefore, we have
\[ T_p \CT \cong T_p \Hilb_3 \P^2 \cong \Hom_{\P^2} (I, \O_{\P^2}/I).\]
At $[1:0:0]$, we have $I = (y,z)$ where $y = Y/X$ and $z = Z/X$.  So
we get $\Hom(I, \O/I) = \k \langle \widehat y, \widehat z \rangle$
where $\widehat y$ and $\widehat z$ are the dual variables to $y$ and
$z$; their weights are $a-b$ and $a-c$, respectively.  At $[0:0:1]$,
we have $I = (x,y^2)$ where $x = X/Z$ and $y = Y/Z$.  So we get
\[\Hom(I, \O/I) = \Hom_k (\langle x, y^2 \rangle, \langle 1, y \rangle) = \k \langle \widehat x \otimes 1, \widehat x \otimes y, \widehat{y^2} \otimes 1, \widehat{y^2} \otimes y \rangle.\]
The weights of these elements are $c-a, b-a, 2c-2b, c-b$.  Combining
the contributions from $[1:0:0]$ and $[0:0:1]$, we get the full set of
weights.

Now we shift focus to a fixed point of type (6).  At this point,
the map $\CT \to \Hilb_3 \check{\P}^2$ is a local isomorphism.  Denote
by $\widehat{X}$, $\widehat{Y}$, and $\widehat{Z}$ the variables dual
to $X$, $Y$, and $Z$.  Then the corresponding point in
$\Hilb_3 \check{\P}^2$ is the point
\[ \langle \widehat{Z}, \widehat{Y}^2\widehat{X} \rangle.\] As in the
third case, we get the weights $(c-a, 2b-2a, c-b, b-a, a-b, c-b)$.

\end{proof}

We can now compute top degree Chern expressions of the vector bundle $E$. In particular, using \Cref{thm:localization}, we get

\begin{equation}
  \label{eq:57728}
c_3(E)^2 - c_2(E)c_4(E) = 2^7 \times 11 \times 41 = 57728.
\end{equation}
The computation is carried out in \S \ref{sec:sagecode}.

Before continuing with our story, we take a moment to collect other weight calculations which will be needed in the last section.  We maintain the 1-6 ordering of the $S_{3}$ orbits of fixed points throughout. 
\begin{proposition}
\label{proposition:weightstautological}
Let $\O^{[3]}, \O(1)^{[3]},$ and $\O(2)^{[3]}$ denote the tautological rank $3$ bundles pulled back to $\CT$, and let $\O(H)$ denote the line bundle on $\CT$ corresponding to the divisor of those $(T,T^{*})$ such that $T$ meets a fixed line. The torus weights of these bundles at the fixed points in $\CT$ are listed below.  (One then applies the action of $S_3$ on the letters $\{a,b,c\}$.) 
\begin{itemize}
    \item[$\O^{[3]}$:] \begin{enumerate}
        \item $(0,0,0)$
        \item $(0,b-c,0)$,
        \item $(0,b-c,0)$,
        \item $(0,b-c,2b-2c)$,
        \item $(0,a-c,b-c)$,
        \item $(0,a-c,b-c)$.
    \end{enumerate}
    \item[$\O(1)^{[3]}$:] \begin{enumerate}
        \item $(a,b,c)$,
        \item $(a,b,c)$,
        \item $(b,b,c)$,
        \item $(c,b,2b-c)$,
        \item $(a,b,c)$,
        \item $(a,b,c)$.
    \end{enumerate}
    \item[$\O(2)^{[3]}$:] \begin{enumerate}
        \item $(2a,2b,2c)$,
        \item $(2a,b+c,2c)$,
        \item $(2b,b+c,2c)$,
        \item $(2b,b+c,2c)$,
        \item $(a+c,b+c,2c)$,
        \item $(a+c,b+c,2c)$.
    \end{enumerate}
    \item[$\O(3)^{[3]}$:] \begin{enumerate}
        \item $(3a,3b,3c)$,
        \item $(3a,b+2c,3c)$,
        \item $(3b,b+2c,3c)$,
        \item $(b+2c,2b+c,3c)$,
        \item $(a+2c,b+2c,3c)$,
        \item $(a+2c,b+2c,3c)$.
    \end{enumerate}
    \item[$\O(H)$:] \begin{enumerate}
        \item $(a+b+c)$,
        \item $(a+2c)$,
        \item $(b+2c)$,
        \item $(3c)$,
        \item $(3c)$,
        \item $(3c)$. 
    \end{enumerate}
\end{itemize}
\end{proposition}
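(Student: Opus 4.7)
The plan is to compute, at each of the six $S_3$-orbit representatives of $\Gm$-fixed points listed in the previous subsection, the torus weight on the fiber of each bundle. Since all five bundles are pulled back from $\Hilb_3 \P^2$ along the $\Gm$-equivariant forgetful map $\phi_1 \from \CT \to \Hilb_3 \P^2$, the calculation can be performed on $\Hilb_3 \P^2$. As an immediate consistency check, the two fat-point orbits (5) and (6) must receive identical entries, because they share the same underlying $T$, and indeed the table does so.

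For the rank-three bundles $\O(n)^{[3]}$, identified as the pushforwards $\pi_{*}(\O_{\mathcal{T}}\otimes p_{2}^{*}\O(n))$ from the universal length-three subscheme $\mathcal{T}$, I work chart by chart over $\Supp(T)$. On each standard affine chart $\{X\neq 0\}$, $\{Y\neq 0\}$, or $\{Z\neq 0\}$ I trivialize $\O(n)$ equivariantly by $X^n$, $Y^n$, or $Z^n$ (of weights $na$, $nb$, $nc$), and choose a monomial equivariant basis of the Artinian local ring $\O_{\P^2,p}/(I_T)_p$ in the corresponding affine coordinates, whose weights are differences of $a$, $b$, $c$. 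Multiplying through by the chosen trivialization yields an equivariant basis of the fiber of $\O(n)^{[3]}$, and the weights are read off directly. Summing contributions from the points of $\Supp(T)$ produces, orbit by orbit, all entries listed for $\O^{[3]}, \O(1)^{[3]}, \O(2)^{[3]}, \O(3)^{[3]}$.

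For the line bundle $\O(H)$, the key identity is
\[
\O(H) \simeq \det \O(1)^{[3]} \otimes \left(\det \O^{[3]}\right)^{-1},
\]
coming from the observation that, for a $\Gm$-equivariant linear form $\ell$ defining the fixed line $L$, multiplication by $\ell$ gives a map $\O^{[3]} \to \O(1)^{[3]}$ of rank-three bundles whose determinant vanishes precisely where $T$ meets $L$; on a reduced triple $\{p_1,p_2,p_3\}$ this map is the diagonal $\mathrm{diag}(\ell(p_1),\ell(p_2),\ell(p_3))$, so its determinant is $\ell(p_1)\ell(p_2)\ell(p_3)$. Consequently the weight of $\O(H)$ at each fixed point equals the sum of the weights of $\O(1)^{[3]}$ minus the sum of the weights of $\O^{[3]}$, and the tabulated values $(a+b+c,\, a+2c,\, b+2c,\, 3c,\, 3c,\, 3c)$ follow immediately from the previous step.

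No serious difficulty is anticipated: because each ideal generator is a torus eigenvector and $\Gm$ acts diagonally in $X,Y,Z$, the monomial basis of every local Artinian quotient involved is automatically equivariant, and all computations reduce to the same style of bookkeeping already performed for the bundle $E$ in the preceding subsection. The principal care needed is in handling the non-reduced cases (3)--(6) consistently across the chosen orbit representatives.
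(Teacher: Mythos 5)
The paper does not actually give a proof of this proposition; the weight tables are simply asserted. Your blind proof therefore fills that gap, and it does so by exactly the style of calculation the paper uses two propositions earlier to justify the weights of $E$ and of the tangent bundle: trivialize $\O(n)$ equivariantly by the appropriate $X^n$, $Y^n$, or $Z^n$ on each affine chart containing a point of $\Supp(T)$, take the induced monomial basis of the Artinian quotient, read off the weights, and sum over connected components of $T$. I checked all six orbit types against the tables and they match (up to reordering the entries). The one step that goes beyond the paper's implicit method is your identification $\O(H) \simeq \det \O(1)^{[3]} \otimes \bigl(\det \O^{[3]}\bigr)^{-1}$ via the multiplication map $\ell\cdot\colon \O^{[3]} \to \O(1)^{[3]}$; this is a clean and correct way to reduce the $\O(H)$ column to the first two, and it reproduces the tabulated values $(a+b+c,\, a+2c,\, b+2c,\, 3c,\, 3c,\, 3c)$. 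Your argument is correct and complete; the only cosmetic nicety would be to note explicitly that the degeneracy locus of the multiplication map has the expected codimension one (true, since the map is an isomorphism off the divisor $H$), so that its determinant really does cut out $H$ with the right scheme structure.
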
 

\section{Circumventing excess}
\label{sec:pencilspace}

\subsection{Why 57728 is wrong}

The calculation \eqref{eq:57728} done in the previous section is unfortunately not the number $\nu_{3,2}$.  The problem is that the  evaluation map 
\begin{align}
    \label{eq:ev}
\underline{H^{0}(\P^{2}, \I_{\Gamma_{13}}(5))} \to E,
\end{align}
where $\Gamma_{13} \subset \P^{2}$ is a general set of $13$ points (see \Cref{subsec:returning}), has $2$-dimensional kernel over certain points $(T,T^{*}) \in \CT$ which should not be counted for our enumerative problem -- there is excess in the degeneracy scheme.  For a simple example, observe that if $T$ consists of three of the points of $\Gamma_{13}$, then the evaluation mapping over $(T,T^{*})$ automatically has at least a $2$-dimensional kernel. There are more complicated contributions to the excess. (Ultimately this is explained by \Cref{lemma:threecomponents} below.) To dodge the excess, we change our viewpoint and work in a Grassmannian bundle over $\CT$ which we denote by $\SQP$ and call \emph{the space of singular quintic pencils}.  Recall that the vector bundle $\mathcal{V}_{5}$ (see \Cref{proposition:vectorbundle}) is related to $E$ by an exact sequence $$0 \to \mathcal{V}_{5} \to \underline{H^{0}(\P^{2}, \O(5))} \to E \to 0$$ over $\CT$.

\begin{definition}
  \label{definition:quinticsingularpencils} The {\bf space of singular quintic pencils}, denoted  $\SQP$, is the
  smooth variety $\Gr(2,\mathcal{V}_{5})$ representing $2$-dimensional subspaces in the fibers of $\mathcal{V}_{5}$.
\end{definition} 
A point of $\SQP$ corresponds to a triple $(T, T^{*}, \Lambda)$
where $(T,T^{*}) \in \CT$ is a complete triangle and $\Lambda$ is a
pencil of quintic curves, all containing the length 9 scheme
$\s(T,T^{*})$. We let \[\varphi: \SQP \to \CT\] denote the map sending
$(T,T^{*},\Lambda)$ to $(T,T^{*})$;  $\varphi$ is a $\Gr(2,12)$-bundle.  The variety $\SQP$ also affords a natural map
\[\eta: \SQP \to \Gr\left(2, H^{0}(\P^{2}, \O(5))\right)\]
with formula $\eta(T,T^{*},\Lambda) = \Lambda$. 

Let us state the main objective of this section from the outset, to better orient the reader: 

\begin{theorem}
    \label{theorem:Domintegral} Let $p \in \P^{2}$ be a point, and let $\Dom(p) \subset \SQP$ be as in \Cref{definition:D} below. Then $$\nu_{3,2} = \int_{\SQP} \left[\Dom(p)\right]^{13}.$$
\end{theorem}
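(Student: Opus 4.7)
The plan is to realize $\Dom(p) \subset \SQP$ as the vanishing locus of a natural section of a rank-$2$ vector bundle, so that the integral literally measures the intersection number of thirteen such loci. Let $\mathcal{U} \subset \varphi^* \mathcal{V}_5$ denote the tautological rank-$2$ subbundle on $\SQP = \Gr(2, \mathcal{V}_5)$. For each $p \in \P^2$, restriction of quintic forms to $p$ gives a canonical bundle map $\mathrm{ev}_p \from \mathcal{U} \to \underline{\O_p(5)}$, and $\Dom(p)$ is exactly its vanishing locus, i.e., the zero scheme of a section of $\mathcal{U}^* \otimes \underline{\O_p(5)}$, a rank-$2$ bundle. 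Hence $[\Dom(p)] = c_2(\mathcal{U}^* \otimes \underline{\O_p(5)})$, of expected codimension $2$, and $[\Dom(p)]^{13}$ lies in top codimension $26 = \dim \SQP$. For general $p_1, \dots, p_{13}$, the integral $\int_{\SQP}[\Dom(p)]^{13}$ computes the degree (with multiplicities) of $\bigcap_{i=1}^{13} \Dom(p_i)$, provided this scheme is zero-dimensional. The theorem then splits into two parts: (a) a set-theoretic bijection between $\bigcap_i \Dom(p_i)$ and the set $B$ of singular triads for $(p_1, \dots, p_{13})$, and (b) a transversality statement forcing multiplicity one at each point.

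For (a), I would argue as follows. Given $(T, T^*, \Lambda) \in \bigcap_i \Dom(p_i)$, the pencil $\Lambda$ consists of quintics singular at $T$ and passing through every $p_i$. For $(p_1, \dots, p_{13})$ in a Zariski-open set, I expect $T$ to be honest (reduced, non-collinear), its triangle to avoid all of the $p_i$, and the generic member of $\Lambda$ to be a reduced, irreducible quintic -- so $T$ is genuinely a singular triad. Each is an open condition; what must be shown is that the locus of "bad" tuples in the universal incidence $\mathcal{I} = \{(x, p_1, \dots, p_{13}) : p_i \in \Base(\Lambda_x)\} \subset \SQP \times (\P^2)^{13}$ -- for example with $T \in \Fat \cup \Thin$, with the triangle of $T$ meeting some $p_i$, or with every element of $\Lambda$ reducible -- has dimension strictly less than $2 \cdot 13 = 26$. \Cref{proposition:fatandtriple} and the orbit stratification of $\Hilb_3\P^2$ provide the necessary dimension bounds. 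Conversely, every singular triad $T$ yields a unique $T^*$ (part (4) of \Cref{proposition:fatandtriple}, since $T \notin \Fat$) and a unique pencil $\Lambda$, namely the kernel of the evaluation $H^0(\P^2, \I_{\Gamma_{13}}(5)) \to E_{(T,T^*)}$, which is exactly $2$-dimensional by condition (4) defining $\mathcal{Y}$. \Cref{theorem:translation} then gives $|B| = \nu_{3,2}$.

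The main obstacle, which is the content of \Cref{sec:pencilspace}, is establishing (b). At an intersection point $x = (T, T^*, \Lambda)$, the differentials of the thirteen evaluation sections assemble into a linear map
\[
d\mathrm{ev} \from T_x \SQP \to \bigoplus_{i=1}^{13} \Hom(\Lambda, \O_{p_i}(5)),
\]
whose source and target both have dimension $26$. Transversality at $x$ is precisely the assertion that $d\mathrm{ev}$ is an isomorphism. Via the short exact sequence
\[
0 \to \Hom(\Lambda, \mathcal{V}_5/\Lambda) \to T_x \SQP \to T_{(T,T^*)} \CT \to 0
\]
coming from the Grassmannian bundle $\varphi$, a kernel vector corresponds to a first-order deformation $(\dot T, \dot T^*, \dot \Lambda)$ keeping every $p_i$ in the base locus of the deformed pencil. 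The plan is to rule out such deformations by combining two ingredients: a direct dimension count on the incidence $\mathcal{I}$ showing that the locus where $\mathcal{I} \to (\P^2)^{13}$ fails to be \'etale projects to a proper subvariety of $(\P^2)^{13}$; and a limit linear series argument, specializing $(p_1, \dots, p_{13})$ onto a carefully chosen (possibly reducible) quintic curve containing $T$, tracking how obstructions to deforming the pencil develop in the limit. Any nontrivial kernel vector would produce, after specialization, an impossible limit linear series, contradicting the generality of the $p_i$. With (a) and (b) established, the integral equals $|B| = \nu_{3,2}$, proving the theorem.
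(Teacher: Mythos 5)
Your proposal opens with an incorrect identification. The vanishing locus of the evaluation section $\mathrm{ev}_p \from \mathcal{U} \to \underline{\O_p(5)}$ is $\BP(p)$, not $\Dom(p)$: as a set it is precisely $\{(T,T^*,\Lambda) : p \in \Base(\Lambda)\}$, and by \Cref{theorem:BPdecomposition} this has three irreducible components, with $[\BP(p)] = [\Dom(p)] + 4[\Inc(p)] + [\Lin(p)]$ as codimension-$2$ cycles. Your claim $[\Dom(p)] = c_2(\mathcal{U}^* \otimes \underline{\O_p(5)})$ therefore computes $[\BP(p)]$, and $\int_{\SQP}[\BP(p)]^{13}$ is the Porteous number $57728$ (combine \Cref{lemma:Deltas} with $i=0$ and \eqref{eq:57728}), whereas $\int_{\SQP}[\Dom(p)]^{13} = 4246$. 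The excess that motivates passing from $\CT$ to $\SQP$ does not disappear on $\SQP$; it reappears as the components $\Inc(p)$ and $\Lin(p)$, and the entire point of \Cref{theorem:BPdecomposition} is to isolate and subtract their contributions. So the theorem you are trying to prove directly refutes your opening identification: were it correct, (a) and (b) would yield $57728 = 4246$.

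Setting that aside, your parts (a) and (b) correctly locate the remaining content and overlap with the paper's argument. Part (a) is the substance of \Cref{theorem:solutionset} and its supporting lemmas: dimension counts on the finite-base locus (\Cref{lemma:finite,lemma:finitehonest}) together with a longer analysis ruling out pencils with infinite base (\Cref{lemma:QLNo,proposition:infline,proposition:Movesingular,proposition:LinDom,lemma:ThetaInf}). Note that the paper's limit-linear-series argument appears in \Cref{proposition:LinDom} as part of this set-theoretic step, not in a transversality argument as your part (b) suggests. On part (b), the paper does not spell out a transversality verification of the form you outline; it relies on the set-theoretic identification plus a standard general-position argument for multiplicity one. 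Your differential-of-the-evaluation-section computation is sensible at a point $x \in \bigcap_i\Dom(p_i)$, where --- precisely because of (a) --- $x$ avoids $\Inc(p_i)$ and $\Lin(p_i)$, so the germs of $\BP(p_i)$ and $\Dom(p_i)$ at $x$ coincide and the section differential does cut out $T_x\Dom(p_i)$. But this local rescue works only because (a) forces the intersection away from the excess components, which is exactly what your opening paragraph failed to register.
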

We begin by establishing some definitions.

\subsection{First definitions}

\begin{definition}
    \label{definition:base} Let $W \subset H^{0}(\P^{2},\O(n))$ be a subspace. The {\bf base-scheme} of $W$, denoted $\Base(W)$ is the subscheme of $\P^{2}$ which is the common vanishing scheme of all elements of $W$.
\end{definition}

\begin{definition}
  \label{definition:finite} We let \[\Inf \subset \SQP\] denote the
  closed subset consisting of triples $(T,T^{*},\Lambda)$ such that $\Base(\Lambda)$ is infinite. We let
  \[\Fin \subset \SQP\] denote the open complement of $\Inf$.
\end{definition}

\subsection{Point conditions on singular quintic pencils}
\label{sec:pointconditionSQP}

Fix a point $p \in \P^{2}$.  The point $p$ determines the hyperplane
\[H_{p} \subset H^{0}(\P^{2}, \O(5))\] consisting of quintic forms vanishing at $p$,
and therefore also determines a codimension $2$ sub-Grassmannian
\[\Gr\left(2, H_{p}\right) \subset \Gr\left(2, H^{0}\left(\P^{2}, \O(5)\right)\right).\]

\begin{definition}
  \label{definition:basepointp} Maintain the setting immediately
  prior. We define \[\BP (p) \subset \SQP\] to be
  the subscheme $\eta^{-1}\left(\Gr(2,H_p)\right)$.
\end{definition}
As a set, $\BP (p)$ consists of those triples
$(T,T^{*},\Lambda) \in \SQP$ having the property that $p \in \Base(\Lambda)$.

\begin{proposition}
  \label{proposition:BPp}
  $\BP (p)$ is a codimension 2 local complete intersection subscheme
  of $\SQP$.
\end{proposition}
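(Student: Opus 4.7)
The plan is to realize $\BP(p)$ as the vanishing scheme of a canonical section of a rank $2$ vector bundle on $\SQP$, and then verify the expected codimension via a fiberwise dimension count. First I will observe that the tautological rank $2$ subbundle $\mathcal{S} \subset \varphi^*\mathcal{V}_5$ on $\SQP = \Gr(2, \mathcal{V}_5)$ sits inside the trivial bundle $\underline{H^0(\P^2, \O(5))}$ via the natural inclusion $\mathcal{V}_5 \subset \underline{H^0(\P^2, \O(5))}$. Composing with evaluation at $p$ yields a bundle morphism $\mathcal{S} \to \O_{\SQP}$, i.e.\ a global section $s_p$ of $\mathcal{S}^\vee$; its zero scheme equals $\BP(p)$, because $(T, T^*, \Lambda)$ lies in $Z(s_p)$ if and only if every form in $\Lambda$ vanishes at $p$.

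Since $\SQP$ is smooth of dimension $\dim \CT + \dim \Gr(2, 12) = 26$ and $\mathcal{S}^\vee$ has rank $2$, any such zero scheme has codimension at most $2$ everywhere. The proposition will follow once we show that every irreducible component of $\BP(p)$ has dimension at most $24$; then $\BP(p)$ is automatically a local complete intersection of codimension exactly $2$. The key step is thus this dimension bound, which I plan to verify by stratifying via the Grassmannian bundle $\varphi \from \SQP \to \CT$.

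For a fixed $(T, T^*) \in \CT$, the fiber of $\BP(p) \to \CT$ is the sub-Grassmannian $\Gr(2, K) \subset \Gr(2, \mathcal{V}_5|_{(T,T^*)})$, where $K$ is the kernel of the evaluation functional on $H^0(\P^2, \I_{\s(T,T^*)}(5))$ at $p$. This kernel has codimension $0$ or $1$ according to whether $p$ does or does not lie in $\Base(H^0(\P^2, \I_{\s(T,T^*)}(5)))$, giving fiber dimension $20$ or $18$ respectively. On the open stratum where $T \notin \Thin$, \Cref{lemma:generatedquintics} identifies this base locus with $\s(T, T^*)$, whose support equals $\Supp(T)$; since the locus $\{p \in \Supp(T)\} \subset \CT$ has dimension at most $4$ (achieved by honest triangles through $p$), both the substratum $\{p \in \Supp(T)\}$ (dim $\leq 4 + 20 = 24$) and its complement (dim $\leq 6 + 18 = 24$) satisfy the required bound.

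The main obstacle I anticipate is the thin stratum, where \Cref{lemma:generatedquintics} fails. Here a short direct computation in the spirit of \Cref{example:collinearZnet} should show that if $T$ is thin on a line $L$ with defining form $\ell$, then every element of $H^0(\P^2, \I_{\s(T,T^*)}(5))$ is divisible by $\ell$; consequently the base locus contains $L$, so $p$ is a base point exactly when $p \in L$. The sub-locus of thin $(T, T^*)$ with $L \ni p$ has dimension $1 + 3 = 4$ (the pencil of lines through $p$, together with a length-$3$ subscheme on each), contributing $4 + 20 = 24$, while its complement inside the thin locus has dimension $5$ with fiber dimension $18$, contributing $23$. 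Combining all strata produces the required bound and identifies $\BP(p)$ as a codimension $2$ local complete intersection.
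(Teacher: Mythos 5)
Your proposal is correct and takes essentially the same route as the paper: both establish the codimension $\leq 2$ bound (you via a section of $\mathcal{S}^\vee$, the paper via pulling back the Schubert cycle $\Gr(2,H_p)$), and then bound fiber dimensions over $\varphi \from \SQP \to \CT$ using the description of $\Base\bigl(H^0(\I_{\s(T,T^*)}(5))\bigr)$. The only cosmetic difference is that the paper argues by contradiction assuming a divisor component and proves an inline lemma about supports of base loci, whereas you stratify directly and appeal to \Cref{lemma:generatedquintics} for the non-thin stratum; the dimension counts and the key geometric input are identical.
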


\begin{proof}
    Since $\Gr(2,H_{p})$ is smooth and is a codimension $2$ subvariety of $\Gr\left(2, H^{0}(\P^{2}, \O(5))\right)$, and since $\SQP$ is also smooth, it follows that $\codim \BP(p) \leq 2$, and it suffices to show $\codim \BP(p) = 2$. It is clear that $\BP (p) \neq \SQP$, so let us assume for sake of contradiction that $C \subset \BP(p)$ is an irreducible component which is a divisor in $\SQP$.  

    Dimension constraints yield two possibilities. The first possibility is $\varphi(C) = \CT$, and the second possibility is that $\varphi(C) \subset \CT$ is a divisor.  Before continuing the proof, we prove a general claim:  
    \begin{lemma}
    \label{claim:support}
    Let $(T,T^{*}) \in \CT$ be arbitrary, and let $W = H^{0}(\P^{2}, \I_{\s(T,T^{*})}(5))$.  Then \[\Supp \Base(W) = \Supp T\] if and only if $T \notin \Thin$ and otherwise $\Supp \Base(W)$ is the line $\langle T \rangle$.
    \end{lemma}
    \begin{proof}[Proof of \ref{claim:support}]
    The claim rests on the observation that $\Supp T = \Supp \Base (\Lambda_{T})$ if and only if $T$ is not contained in a line. Here $\Lambda_{T}$ is the net of conics (\autoref{definition:NetZ}) containing $T$.  If $T \in \Thin$, then the quintic curves in $W$ are those which contain the line spanned by $T$ as a component, and whose residual quartic curve contains $T$.  From this description it is clear that $\Supp\Base(W) = \langle T \rangle$ as claimed.  
    
    So we can and will assume $T \notin \Thin$. Then the inclusion $$\Supp \Base (H^{0}(\I_{T}^{2}(4))) \subset \Supp T$$ is seen by considering the pairwise products of  three quadratic polynomials spanning $\Lambda_{T}$.  Therefore, $\Supp \Base (H^{0}(\I_{T}^{2}(5))) \subset \Supp T$ as well. Since $\Base (W) \subset \Base (H^{0}(\I_{T}^{2}(5)))$, we conclude that $\Supp \Base (W) \subset \Supp T$. The opposite inclusion is trivial, concluding the proof. 
    \end{proof}

    Returning to the proof of \Cref{proposition:BPp}, we consider the case $\varphi(C) = \CT$.  Choose a general honest triangle $(T,T^{*})$ -- in particular, $p \notin T$.  Then, from \Cref{claim:support} we know that $T = \Supp \Base(W)$.  Thus, the point $p$ imposes a non-trivial condition on the elements of $W$, i.e. the vector space $V \subset W$ consisting of those $w \in W$ satisfying $w(p)=0$ has codimension $1$.  Therefore $\varphi^{-1}(\{(T, T^{*})\}) \cap C \subset \Gr(2,V)$ has codimension at least $2$ in $\varphi^{-1}(\{(T, T^{*})\})$, and hence $C$ cannot be a divisor in $\SQP$, our desired contradiction.  

    It remains to deal with the possibility that $\varphi(C)$ is a (irreducible) divisor in $\CT$. Let $(T,T^{*})$ be a general element of $\varphi(C)$, assuming it is a divisor. In particular, $p \notin T$ and if $(T,T^{*}) \in \Thin$ then $p \notin \langle T \rangle$, as otherwise $\varphi(C)$ would have codimension strictly larger than $1$.  By \Cref{claim:support}, it follows that the space of sections of $W$ vanishing at $p$ is a proper subspace $V \subset W$.  Since $\varphi^{-1}(\{(T,T^{*})\}) \cap C \subset \Gr(2,V)$, and $\Gr(2,V)$ has codimension $2$ in $\Gr(2,W)$, it follows that $C$ has codimension at least $3$ in $\SQP$, a contradiction.  The proposition is now proved.
\end{proof}

The proof of \Cref{proposition:BPp} justifies the following definition.

\begin{definition}
  \label{definition:D}
  \begin{enumerate}
    \item    We define \[\Dom (p) \subset \BP (p)\] to be the unique irreducible
  component (with reduced, induced scheme structure) whose general
  point $(T,T^{*},\Lambda)$ satisfies
    \begin{enumerate}
     \item $(T,T^{*})$ is an honest triangle, and
     \item $p \notin T$.
      \end{enumerate} 

    \item  We define \[ \Inc (p) \subset \BP (p) \] to be the irreducible component 
  (with reduced, induced scheme structure) consisting of triples
  $(T,T^{*},\Lambda)$ such that $p \in T$.
    \item  We define \[\Lin (p) \subset \BP (p)\] to
  be the irreducible component (with reduced, induced scheme
  structure) whose general point corresponds to a triple $(T,T^{*}, \Lambda)$ satisfying
  \begin{enumerate}
  \item $T \in \Thin$
  \item $p \notin T$, and
  \item $T$ and $p$ are collinear.
  \end{enumerate}
  \end{enumerate}
\end{definition}

\begin{remark}
    \label{remark:Dombundle} Observe that if $(T,T^{*})$ is such that $p \notin T$ and if $\s(T,T^{*}) \cup \{p\}$ imposes ten independent conditions on quintic forms, then $\Dom(p)$ is the unique irreducible component of $\BP(p)$ lying over a sufficiently small neighborhood of $(T,T^{*})$.  Indeed, over a neighborhood of $(T,T^{*}) \in \CT$ the map $\eta|_{\BP(p)}: \BP(p) \to \CT$ is then a $\Gr(2,11)$-bundle.
\end{remark}

\begin{lemma}
    \label{lemma:threecomponents} $\Dom(p), \Inc(p),$ and $\Lin(p)$ are the irreducible components of $\BP(p)$.
\end{lemma}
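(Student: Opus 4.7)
The plan is to classify every irreducible component of $\BP(p)$ by dimension-counting via its image in $\CT$. By \Cref{proposition:BPp}, $\BP(p)$ is pure of codimension $2$ in $\SQP$, and since $\dim \SQP = \dim \CT + \dim \Gr(2,12) = 6 + 20 = 26$, every irreducible component $C$ of $\BP(p)$ satisfies $\dim C = 24$. I will pick an arbitrary component $C$ and study $D := \varphi(C) \subset \CT$.

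The key observation is that the fibers of $\varphi\colon \BP(p) \to \CT$ take exactly two dimensions. Writing $W_{T,T^{*}} = H^{0}(\P^{2},\I_{\s(T,T^{*})}(5))$ and $V_{T,T^{*}} = W_{T,T^{*}} \cap H_{p}$, the fiber of $\BP(p)$ over $(T,T^{*})$ is $\Gr(2, V_{T,T^{*}})$. By \Cref{claim:support}, $p \in \Supp \Base(W_{T,T^{*}})$ if and only if either $p \in T$ or ($T \in \Thin$ and $p \in \langle T \rangle$); otherwise $V_{T,T^{*}}$ has codimension $1$ in the $12$-dimensional $W_{T,T^{*}}$. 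Hence the fiber is $\Gr(2,11)$ (dimension $18$) at a generic $(T,T^{*})$ and jumps to $\Gr(2,12)$ (dimension $20$) precisely on the closed locus described above. Since $\dim C = \dim D + (\text{generic fiber dim of } C \to D)$ and the generic fiber dim of $C \to D$ is bounded by the fiber dim of $\BP(p) \to \CT$, the only possibilities consistent with $\dim C = 24$ are $\dim D = 6$ (fiber dim $18$) or $\dim D = 4$ (fiber dim $20$); the value $\dim D = 5$ is excluded because no fiber of $\varphi$ has dimension $19$, and because $\BP(p)$ being pure of dimension $24$ forbids a divisor $D \subset \CT$ over which the fiber dim is $20$.

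In the case $\dim D = 6$, we have $D = \CT$. Over a generic honest triangle $(T,T^{*})$ with $p \notin T$, the fiber of $\BP(p)$ is the irreducible $\Gr(2, V_{T,T^{*}}) \cong \Gr(2,11)$, and $C$ must contain this whole fiber; thus $C$ is the closure in $\SQP$ of the $\Gr(2,11)$-bundle over the open locus of such $(T,T^{*})$, which by \Cref{remark:Dombundle} is exactly $\Dom(p)$. In the case $\dim D = 4$, the generic fiber of $C$ over $D$ has dimension $20$, which forces $C$ to contain the full $\Gr(2, W_{T,T^{*}})$-fiber over the generic point of $D$. By \Cref{claim:support}, $D$ is therefore contained in
\[
Z_{1} \cup Z_{2}, \quad Z_{1} := \{(T,T^{*}) \in \CT : p \in T\}, \quad Z_{2} := \{(T,T^{*}) \in \CT : T \in \Thin,\ p \in \langle T \rangle,\ p \notin T\}.
\]
A direct parameter count (together with irreducibility of the relevant Hilbert-scheme strata and the fact that $\Fat$ is disjoint from both $Z_{1}$ and $Z_{2}$) shows that $Z_{1}$ and $Z_{2}$ are each irreducible of dimension $4$. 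Thus $D$ equals either $Z_{1}$ or $Z_{2}$, and $C$ is the closure of the resulting $\Gr(2,12)$-bundle, which is $\Inc(p)$ or $\Lin(p)$ respectively.

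The main obstacle is the dimension-exclusion argument for $\dim D = 5$: it relies crucially on the discreteness of fiber dimensions ($18$ or $20$ only) combined with the purity of $\BP(p)$ provided by \Cref{proposition:BPp}. The rest is essentially a bookkeeping exercise, but one must also verify the irreducibility of $Z_{1}$ and $Z_{2}$ (which follows by identifying $Z_{1}$ with a blow-up of the incidence Hilbert scheme $\{T \ni p\} \subset \Hilb_{3}\P^{2}$ and $Z_{2}$ with a $\P^{3}$-bundle over the $\P^{1}$ of lines through $p$), and the identification of the corresponding closed preimages with $\Inc(p)$ and $\Lin(p)$ as defined.
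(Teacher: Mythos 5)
Your proof is correct and takes a genuinely different route from the paper's. The paper's proof picks a general point $(T,T^{*},\Lambda)$ of an arbitrary component $Z$ and runs a direct case analysis on the geometry of $T$ relative to $p$ (is $p\in T$? is $T$ thin? is $p\in\langle T\rangle$?), using \Cref{lemma:generatedquintics} via \Cref{remark:Dombundle} for the main case $T\notin\Thin$, $p\notin T$, and quick dimension counts for the rest. You instead organize everything around the single dimension invariant $\dim D$ for $D=\varphi(C)$: you observe that the fiber of $\varphi|_{\BP(p)}$ is $\Gr(2,V_{T,T^{*}})$ with $\dim V_{T,T^{*}}\in\{11,12\}$ (hence fiber dimension $18$ or $20$), that the jump locus is exactly $Z_{1}\cup Z_{2}$ (via \Cref{claim:support}, which is essentially the same ingredient as \Cref{lemma:generatedquintics}), and then let the arithmetic of $\dim C = 24$, $\dim D + \text{fiber dim} = 24$, and $\dim(Z_1\cup Z_2)=4$ do all the work, including ruling out $\dim D = 5$. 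This is arguably a cleaner packaging, as it makes the logical role of the purity statement in \Cref{proposition:BPp} explicit, whereas the paper leans on ``for dimension reasons'' in several places. Two small caveats: your parenthetical claim that $\Fat$ is disjoint from $Z_{1}$ is false (the fat scheme supported at $p$ contains $p$, so its preimage is a $3$-dimensional piece of $Z_{1}$); this is harmless since $3<4$, but it means the irreducibility of $Z_{1}$ takes a moment more thought than you allot to it. You should also be a little careful to note that the identification of the closure of the $\Gr(2,12)$-bundle over $Z_{1}$ (resp.\ $Z_{2}$) with $\Inc(p)$ (resp.\ $\Lin(p)$) as \emph{defined} in \Cref{definition:D} is by inspecting the conditions at the general point, though that is indeed routine.
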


\begin{proof}
    Let $Z \subset \BP(p)$ be an irreducible component, and let $(T,T^{*},\Lambda)$ be a general element of $Z$.  We must show $Z$ is one of the three listed sets.
    
    If $p \in T$, then $Z = \Inc(p)$ (for dimension reasons) and we are done.  So we may and will assume $p \notin T$ from here.  

    If $T \in \Thin$, then $\Lambda$ must be a pencil of quintics which has the line $\langle T\rangle$ in its base scheme. If $p \in \langle T \rangle$ then again by counting dimensions we conclude $Z = \Lin(p)$.  If $p \notin \langle T \rangle$ on the other hand, then the set of all such $(T,T^{*},\Lambda)$ does not have large enough dimension to contribute an irreducible component of $\BP(p)$.

    If $(T,T^{*})$ is such that $T \notin \Thin$ then by \Cref{lemma:generatedquintics} the twisted ideal sheaf $\I_{\s(T,T^{*})}(5)$ is globally generated, and therefore $\s(T,T^{*}) \cup \{p\}$ imposes ten independent conditions on quintic forms.  Therefore, $Z = \Dom(p)$ by \Cref{remark:Dombundle}, finishing the proof.    
\end{proof} 

Our next objective is to determine the multiplicities of $\BP(p)$ along its three irreducible components $\Dom(p), \Inc(p),$ and $\Lin(p)$ (\Cref{lemma:threecomponents}).   We need a bit of preparation before continuing.  If we let $S \to \SQP$ denote the universal rank $2$ vector bundle, then we can let  

\begin{eqnarray}
    S^{\dagger} \subset S \times _{\SQP}S
\end{eqnarray}
denote the bundle of frames for $S/\SQP$.  A point of $S^{\dagger}$ is a tuple $(F,G,T,T^{*})$ where $F$ and $G$ are linearly independent quintic forms which are both elements of $H^{0}\left(\P^{2}, \I_{\s(T,T^{*})}(5)\right)$.  The natural morphism $S^{\dagger} \to \SQP$ is smooth and faithfully flat (it is a $\GL_{2}$-torsor), and therefore, if we use the $\dagger$ superscript in the obvious way, it suffices to study the multiplicities of $\Dom(p)^{\dagger}, \Inc(p)^{\dagger},$ and $\Lin(p)^{\dagger}$ as irreducible components of $\BP(p)^{\dagger}$.  

The reason for passing to $S^{\dagger}$ is that $\BP(p)^{\dagger}$ is a global complete intersection of two divisors: If we define 
\begin{eqnarray*}
    H_{1}(p) &:= \left\{\begin{tabular}{c}
         $(F,G,T,T^{*}) \in S^{\dagger}$ such\\ that
         $F(p)=0$.\\ 
    \end{tabular} \right\} \text{ and}\\
    H_{2}(p) &:= \left\{\begin{tabular}{c}
         $(F,G,T,T^{*}) \in S^{\dagger}$ such\\ that
         $G(p)=0$.\\ 
    \end{tabular} \right\} \text{,}
\end{eqnarray*}
    then $\BP(p)^{\dagger} = H_{1}(p) \cap H_{2}(p)$ as schemes. 
 For transversality arguments, it behooves us to better illuminate the three tangent spaces of $S^{\dagger}, H_{1}(p),$ and $H_{2}(p)$ at a point $(F,G,T,T^{*})$.  We will only need to analyze the situation where $T \subset \P^{2}$ consists of three distinct points $\{a,b,c\}$, and where $F$ and $G$ possess only ordinary nodes at $a,b,c$.  We will use the language of deformation theory, working over the dual numbers $\k[\varepsilon]/(\varepsilon^{2})$. 
    
    A first order deformation of $F$ (resp. $G$) is given by $F + \varepsilon F'$ (resp. $G + \varepsilon G'$) where $F'$ (resp. $G'$) is a quintic form.  A first order deformation of $F$ (resp. $G$) which continues to have three nodes and which  \emph{allows their locations  $a,b$ and $c$ to deform} is of the form $F + \varepsilon F'$ where $F'(a)=F'(b) = F'(c) = 0$. Therefore, the tangent space $T_{(F,G,T,T^{*})}S^{\dagger}$ is a linear subspace of the product vector space $H^{0}\left(\P^{2},\I_{T}(5)\right) \times H^{0}\left(\P^{2},\I_{T}(5)\right)$, one  which we identify next.

    The essential question we must answer is: Given a form $F'$ vanishing at $a,b,$ and $c$, how do we determine the deformation of the nodes $a,b,c$ induced by $F + \varepsilon F'$? A local calculation reveals a clean answer. We will focus on just the point $a$. $F$ is a global section of the sheaf $\m_{a}^{2}(5)$, and therefore induces an element $$\hess_{F} \in \left(\m_{a}^{2}/\m_{a}^{3}\right) \otimes \O(5).$$ By the natural isomorphism $$\m_{a}^{2}/\m_{a}^{3} \simeq \Sym^{2}(\m_{a}/\m_{a}^{2}),$$ and in light of the inclusion (char. $\k \neq 2$) $$\Sym^{2}(\m_{a}/\m_{a}^{2}) \subset \Hom_{\k}\left(\left(\m_{a}/\m_{a}^{2}\right)^{\vee} , \m_{a}/\m_{a}^{2} \right),$$ we may view $\hess_{F}$ as an element of $$\Hom \left(\left(\m_{a}/\m_{a}^{2}\right)^{\vee} , \left(\m_{a}/\m_{a}^{2}\right) \otimes \O(5) \right).$$  The notation is chosen because in local coordinates if $f(x,y)$ is an affine quintic obtained by dehomogenizing $F$ then $\hess_{F}$ is represented by the $2 \times 2$  \emph{Hessian matrix} $$\begin{bmatrix}
        f_{xx} & f_{xy} \\
        f_{xy} & f_{yy} \\
    \end{bmatrix}.$$   As a consequence of $F$ having an ordinary node at $a$, we see that $\hess_{F}$ is invertible.  
    
    Now, suppose $F + \varepsilon F'$ is a deformation satisfying $F'(a) = 0$. Then $F'$ determines its differential $$d F'_{a} \in \left(\m_{a}/\m_{a}^{2}\right) \otimes \O(5),$$ and therefore $$\tau_{F,a}(F') := \hess_{F}^{-1}(d F'_{a})$$ is a well-defined element of the tangent space $$\left(\m_{a}/ \m_{a}^{2} \right)^{\vee} = T_{a}\P^{2}.$$ This vector is the deformation of the node at  $a$ induced by the deformation $F + \varepsilon F'$, as can easily be checked in local coordinates.  We are prepared to  express the tangent space $T_{(F,G,T,T^{*})}S^{\dagger}$: 
    \begin{eqnarray}
    \label{tangentSdagger} T_{(F,G,T,T^{*})}S^{\dagger} = \left\{ \begin{tabular}{c}
            Pairs of quintics $(F',G')$ vanishing \\
            at $T$ 
            satisfying $\tau_{F,t}(F') = \tau_{G,t}(G')$ for \\ all three points $t \in T$
        \end{tabular}
        \right\}.
    \end{eqnarray}

\begin{lemma}
    \label{LinTransverse} The divisors $H_{1}(p)$ and $H_{2}(p)$ intersect transversely at a general point of $\Lin(p)^{\dagger}.$
\end{lemma}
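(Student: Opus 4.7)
My plan is to reduce the transversality statement to the linear independence of two functionals $\delta_F, \delta_G$ on $T_{(T, T^*)}\CT$, and then verify this by computing the functionals along two explicit tangent directions.

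At a general point of $\Lin(p)^{\dagger}$, $T = \{a, b, c\}$ is three distinct points on a line $\ell$ with $p \in \ell \setminus T$. Since any quintic singular at three collinear points must contain their common line as a factor, every element of the fiber $(\mathcal V_5)_{(T, T^*)}$ is of the form $\ell \cdot (\text{quartic through } T)$, and hence vanishes at $p$. Therefore, for any tangent vector $\dot T \in T_{(T, T^*)}\CT$, the values $F'(p)$ and $G'(p)$ of a lift $(F', G') \in TS^{\dagger}$ covering $\dot T$ are independent of the choice of lift (two lifts differ by a fiber element, which vanishes at $p$). This defines functionals $\delta_F, \delta_G \in T^*_{(T, T^*)}\CT$, and transversality of $H_1(p) \cap H_2(p)$ at $(F, G, T, T^*)$ is equivalent to the linear independence of $\delta_F$ and $\delta_G$.

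Writing $F = \ell Q_1$ and $G = \ell Q_2$, I would compute the ratio $\delta_F/\delta_G$ along two tangent directions. For $\dot T_1$ tangent to $\Thin$ and tilting $\ell$ to a nearby line $\ell_s$ not passing through $p$, writing $F_s = \ell_s Q_{1, s}$ and using $\ell(p) = 0$ gives $\delta_F(\dot T_1) = \ell'(p) \, Q_1(p)$, and symmetrically $\delta_G(\dot T_1) = \ell'(p) \, Q_2(p)$, so the ratio is $Q_1(p)/Q_2(p)$. For $\dot T_2$ the normal direction to $\Thin$ in $\CT$ (realized by moving $c$ transversally off $\ell$), a lift $(F', G')$ must have $F'$ with ordinary nodes at $a, b$ and prescribed $1$-jet at $c$ determined by $\eqref{tangentSdagger}$. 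Solving this linear interpolation problem with the ansatz $F' = (x - a_1)(x - b_1) \, R$ for a suitable cubic $R$ (valid because $F'$ must have double points at $a, b$), and working in local affine coordinates, shows $\delta_F(\dot T_2)$ is a nonzero scalar (depending only on $p, a, b, c$) times $Q_{1, x}(c)$, the derivative of $Q_1$ along $\ell$ at $c$. Symmetrically, $\delta_G(\dot T_2)$ is the same scalar times $Q_{2, x}(c)$, giving ratio $Q_{1, x}(c)/Q_{2, x}(c)$.

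For a generic pair of quartics $(Q_1, Q_2)$ through $\{a, b, c\}$, the equation $Q_1(p) Q_{2, x}(c) = Q_2(p) Q_{1, x}(c)$ defines a codimension-one condition (since evaluation at $p$ and tangential differentiation at $c$ are independent linear functionals on the $12$-dimensional space of quartics through $T$), so the two ratios differ and $\delta_F, \delta_G$ are linearly independent on the $2$-plane spanned by $\dot T_1, \dot T_2$. The main difficulty is the computation for $\dot T_2$: although the lift $F'$ is only determined modulo the $12$-dimensional fiber $(\mathcal V_5)_{(T, T^*)}$, the value $F'(p)$ is well-defined, and the factorization ansatz together with the explicit form of $\hess_F^{-1}$ at each point of $T$ must be carefully tracked to extract the $Q_{1, x}(c)$ dependence cleanly.
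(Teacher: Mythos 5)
Your proof is correct, and it takes a genuinely different route from the paper's. The paper proves the lemma by exhibiting a single explicit point of $\Lin(p)^\dagger$ --- a concrete choice of $F = XY(Y-Z)(Y-\lambda Z)Z$, $G = XY(Y-Z)(Y-\mu Z)Z$, $T$ the three coordinate-like points on $X=0$, and $p = [0:\lambda:1]$ --- and a concrete tangent vector $(F',G')$ with $F'(p)=0$ and $G'(p)\neq 0$, thus directly verifying that the two tangent hyperplanes are distinct. Your argument is structural: the observation that every element of the fiber $(\mathcal V_5)_{(T,T^*)}$ is $\ell$ times a quartic, hence automatically vanishes at $p\in\ell$, shows the defining functionals descend to $\delta_F, \delta_G \in T^*_{(T,T^*)}\CT$; you then evaluate them on a tilting direction tangent to $\Thin$ and a transversal direction, reducing transversality to the generic non-vanishing of $Q_1(p)Q_{2,x}(c) - Q_2(p)Q_{1,x}(c)$ on the $12$-dimensional space of quartics through $T$. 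This buys you a clean criterion that makes the genericity visible (and identifies precisely where transversality can fail), at the cost of a more involved local computation for the normal direction --- which, after tracking $\hess_F^{-1}$ with $F=\ell Q_1$ at $c$, does come out to the claimed scalar multiple of $Q_{1,x}(c)$, with the crucial point being that the lift-independence of $F'(p)$ forces $R(p)$ to be determined by $R|_\ell$, which in turn is pinned down by $R_x(c)$. One small caveat: the parenthetical ``valid because $F'$ must have double points at $a,b$'' is not quite the right justification for the ansatz $F'=(x-a_1)(x-b_1)R$ --- having double points at $a,b$ does not force that factorization; rather, the ansatz merely provides \emph{some} valid lift, and since $F'(p)$ is well-defined modulo the fiber, any valid lift suffices. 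With that phrasing adjusted, the argument is sound.
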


\begin{proof}
    We will  exhibit a single point $$(F,G,T,T^{*}) \in \Lin(p)^{\dagger}$$ where the two tangent spaces $T_{(F,G,T,T^{*})}H_{1}(p)$ and $T_{(F,G,T,T^{*})}H_{2}(p)$ are distinct, codimension 1 spaces of the ambient tangent space $T_{(F,G,T,T^{*})}S^{\dagger}$.

    Consider then, the following tuple $(F,G,T,T^{*})$ and point $p$: 
    \begin{align*}
        F & = XY(Y-Z)(Y-\lambda Z)Z, \\
        G & = XY(Y-Z)(Y-\mu Z)Z ,\\
        T & = \left\{a = [0:0:1], b = [0:1:1], c = [0:1:0] \right\},\\
        p & = [0:\lambda:1],
    \end{align*}
    where $\lambda, \mu \in \k$ are to be chosen generally. $(F,G,T,T^{*})$ is evidently contained in $\Lin(p)^\dagger$: all points $a,b,c,p$ lie on the line $X=0$. Additionally, $F$ and $G$ have only simple nodes at the points $a,b,c$, and so the description in \eqref{tangentSdagger} of $T_{(F,G,T,T^{*})}S^{\dagger}$ as a vector space of certain pairs $(F',G')$ applies.   And so, consider the pair 
    \begin{align*}
        F' = Y(Y-Z)(Y-\lambda Z)Z^2,\\
        G' = Y(Y-Z)(Y-\mu Z)Z^{2}.
    \end{align*}
    A local calculation (omitted) then shows that the three membership conditions of \eqref{tangentSdagger}, namely $$\tau_{F,t}(F') = \tau_{G,t}(G'), \,\, \forall t \in T,$$ are met. Furthermore, $(F',G')$ is evidently contained in $T_{(F,G,T,T^{*})}H_{1}(p)$ (because $F'(p)=0$) and is not contained in $T_{(F,G,T,T^{*})}H_{2}(p)$ (because $G'(p) \neq 0$). The lemma follows.  
\end{proof}

\begin{theorem}
  \label{theorem:BPdecomposition}
  As codimension $2$ cycles in $\SQP$, 
  \begin{eqnarray}
      \left[\BP(p) \right] = \left[\Dom(p) \right] + 4\left[\Inc(p)
    \right] + \left[\Lin(p) \right].
  \end{eqnarray}
\end{theorem}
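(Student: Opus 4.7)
The plan is to pass to the $\GL_{2}$-torsor $\pi\from S^{\dagger}\to\SQP$, where $\BP(p)^{\dagger}:=\pi^{-1}(\BP(p))$ is cut out globally as the complete intersection $H_{1}(p)\cap H_{2}(p)$ of the two divisors defined by the vanishings of the tautological sections $F(p)$ and $G(p)$. Because $\pi$ is smooth with connected fibers, the multiplicities of $\Dom(p)$, $\Inc(p)$, $\Lin(p)$ in $[\BP(p)]$ will equal the corresponding multiplicities of their preimages in $[H_{1}(p)]\cdot[H_{2}(p)]$, so it suffices to compute three local intersection numbers, one at the generic point of each component.

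Two of these should be handled quickly. At a general point of $\Lin(p)^{\dagger}$, \Cref{LinTransverse} directly provides a transverse intersection of $H_{1}(p)$ and $H_{2}(p)$, so the multiplicity along $\Lin(p)$ is $1$. At a general point of $\Dom(p)^{\dagger}$, the triangle $T$ is honest and $p\notin T$, so \Cref{lemma:generatedquintics} implies that the length-$10$ scheme $\s(T,T^{*})\cup\{p\}$ imposes ten independent conditions on quintic forms; consequently $\BP(p)$ is, in a neighborhood, a smooth $\Gr(2,11)$-subbundle of the ambient $\Gr(2,12)$-bundle over $\CT$, so $H_{1}(p)$ and $H_{2}(p)$ meet transversely there and again contribute multiplicity $1$.

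The substantive content of the theorem is the multiplicity $4$ along $\Inc(p)$. At a general point, write $T=\{p,b,c\}$ with $b,c$ distinct from $p$ and non-collinear with $p$, and take $\Lambda=\langle F,G\rangle$ to be a generic pencil of quintics with ordinary nodes at $p,b,c$. I will introduce affine coordinates $(x,y)$ on $\P^{2}$ centered at $p$ together with two local parameters $(u,v)$ on $\SQP$ transverse to $\Inc(p)$, with the property that as $(u,v)$ varies the node of $T$ near $p$ moves to $a=(u,v)$; the remaining $24$ local parameters are tangent to $\Inc(p)$. For any member of the deformed pencil, Taylor expansion around the node at $a$ together with the nondegeneracy of that node yields
\begin{equation*}
F(p)=Q_{F}(u,v)+O\!\bigl((u,v)^{3}\bigr),\qquad G(p)=Q_{G}(u,v)+O\!\bigl((u,v)^{3}\bigr),
\end{equation*}
where $Q_{F},Q_{G}$ are the binary Hessian forms of the unperturbed $F,G$ at $p$. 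The intersection multiplicity along $\Inc(p)$ then equals the length of $K[u,v]_{(u,v)}/(F(p),G(p))$, where $K$ is the function field of $\Inc(p)$. For a generic pencil the Hessians $Q_{F},Q_{G}$ are linearly independent binary quadratics, so a direct computation (two conics in two variables with no common linear factor) shows that $(Q_{F},Q_{G})$ is $(u,v)$-primary of colength $4$; the cubic and higher-order perturbations do not alter this length because Artinian complete-intersection lengths are deformation-invariant. The hard part will be confirming the generic linear independence of $Q_{F}$ and $Q_{G}$, which I expect to verify by a single explicit test computation with an adapted pencil, in the style of the proof of \Cref{LinTransverse}. Summing the three contributions yields $[\BP(p)]=[\Dom(p)]+4[\Inc(p)]+[\Lin(p)]$.
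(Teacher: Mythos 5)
Your proposal follows the paper's proof essentially step for step: pass to the frame bundle $S^{\dagger}$, invoke \Cref{LinTransverse} for $\Lin(p)$, observe that $\Dom(p)$ is a smooth $\Gr(2,11)$-subbundle locally (cf.\ \Cref{remark:Dombundle} and \Cref{lemma:generatedquintics}) to get transversality there, and then isolate the key multiplicity-$4$ computation along $\Inc(p)$ via a $2$-dimensional slice normal to $\Inc(p)^\dagger$ parametrized by the displacement $(u,v)$ of the node near $p$, obtaining $F(p)\equiv Q_F(u,v)$, $G(p)\equiv Q_G(u,v)$ modulo $(u,v)^3$. The one cosmetic difference is the final step: the paper phrases the contribution as the intersection multiplicity at a common node of two plane curves whose tangent cones share no line, whereas you phrase it as the colength of $(Q_F,Q_G)$ in the local ring at the generic point of $\Inc(p)^\dagger$ and invoke deformation-invariance of complete-intersection colength. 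These are the same computation in different clothing.

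One correction of wording with mathematical content: you write that ``$Q_F,Q_G$ are linearly independent binary quadratics'' and then parenthetically describe this as ``no common linear factor.'' These are not equivalent -- $u^2$ and $uv$ are linearly independent but share a factor, and $(u^2,uv)$ does \emph{not} have finite colength. The condition you actually need (and the one the paper uses, phrased as ``tangent cones do not share a line'') is coprimality of $Q_F$ and $Q_G$, which does give colength $4$. Your argument is fine once ``linearly independent'' is replaced by ``coprime'' in both places, and the generic coprimality is easy to check at a single explicit point as you suggest.
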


\begin{proof}
    We must explain the multiplicities. Since the natural map $S^{\dagger} \to \SQP$ is smooth and surjective, it suffices to prove 
    \begin{align}
    \label{daggermult}
    \left[\BP(p)^{\dagger} \right] = \left[\Dom(p)^{\dagger} \right] + 4\left[\Inc(p)^{\dagger}
    \right] + \left[\Lin(p)^{\dagger} \right]
    \end{align} as codimension 2 cycles in the frame bundle  $S^{\dagger}$.   The coefficient of $\Lin(p)^{\dagger}$ is explained by \Cref{LinTransverse}. We will only  focus on the coefficient $4$ of $\Inc(p)^{\dagger}$, as the ideas apply equally well (and with fewer complications) to the  coefficient of $\Dom(p)^{\dagger}$. 

    We will choose a sufficiently general $2$-dimensional \'etale-local slice of $S^\dagger$ at a general point $(F,G,T,T^{*}) \in \Inc(p)^{\dagger}$. So, let $(U,q) \subset S^{\dagger}$ be a smooth pointed surface with \'etale-local coordinates $s,t$ at $q$, and suppose $q = (F,G,T,T^{*})$ is a general point of $\Inc(p)^{\dagger}$. Observe that $T = \{p, t_{2}, t_{3}\}$ is an honest triangle, so there is an \'etale neighborhood $V$ of $(T,T^{*}) \in \CT$  which is isomorphic to an \'etale neighborhood the point $(p,t_{2},t_{3}) \in (\P^{2})^{3}$. Let $\alpha_{1}:V \to \P^{2}$ denote projection onto the first factor.  Finally, as part of the generic hypotheses on $U$, after possibly shrinking $U$, suppose the composite $U \to V \to \P^{2}$ is unramified at $q$.  
    
    Having made the choice of the general slice $U$, our objective is to understand the two curves $H_{i}(p) \cap U, i=1,2$ locally near $q$.  Dehomogenizing the family of forms parametrized by $U$, and letting $p = (0,0) \in \mathbb{A}^{2}$, we obtain a pair of varying polynomials dependent on $(s,t)$: 
    \begin{align*}
        f_{(s,t)}(x,y) = c_{11}(x-u)^{2} + c_{12}(x-u)(y-v) + c_{22}(y-v)^{2} + \dots \\
        g_{(s,t)}(x,y) = d_{11}(x-u)^{2} + d_{12}(x-u)(y-v) + d_{22}(y-v)^{2} + \dots,
    \end{align*}
    where $u = u(s,t)$ and $v = v(s,t)$ are the coordinates of the particular node which coincides with the point $p$ at $s=t=0$. The coefficients $c_{ij},d_{ij}$ are also functions of $(s,t)$, and when $s=t=0$ we may assume that $f_{(0,0)}$ and $g_{(0,0)}$ are tri-nodal quintics whose tangent cones at $p=(0,0)$ do not share a line. 

    Now, by introducing the condition ``$H_{1}(p)$" we are simply plugging in $x=y=0$ into $f_{(s,t)}$ and requesting vanishing. This gives the equation $$c_{11}u^2 + c_{12}uv + c_{22}v^2 + \dots = 0,$$ where the excluded terms lie in $(s,t)^{3}$. We obtain a similar local equation for $H_{2}(p) \cap U$ with $c_{ij}$'s replaced with $d_{ij}$'s.  The two functions $u,v$ can be taken to be local coordinates at $q$ -- this is due to the genericity assumptions on the slice $U$. It follows that the local equations $H_{i}(p) \cap U$ inherit the property of having ordinary nodes at $q$ from the fact that $f_{(0,0)}$ and $g_{(0,0)}$ both had ordinary nodes. Thus, $H_{1}(p) \cap U$ and $H_{2}(p)\cap U$ are two curves nodal at $q$.  The tangent cones of $H_{1}\cap U$ and $H_{2} \cap U$ at $q$ do not share a line because the same was true for the curves $f_{(0,0)}$ and $g_{(0,0)}$.  Thus,  the multiplicity $4$ occurring in \eqref{daggermult} is explained.  
\end{proof}


\subsection{Thirteen point conditions}
\label{sec:thirteenpointconditions}

From here on, let $\Gamma_{13} = \{p_{1}, \dots, p_{13}\}$ be 13 general points in
$\P^{2}$, and set \begin{equation}
\label{equation:Theta}\Theta := \bigcap \limits_{i=1, \dots, 13} \Dom(p_i).
\end{equation}
Our next major objective is \Cref{theorem:solutionset}, which states that $\Theta$ is indeed the set we must enumerate. The strategy is of course to argue that certain possible and undesirable types of points do not occur in the intersection.  We first deal with possibilities inside the open set $\Fin$ using dimension counts in  \Cref{lemma:finite}, and \Cref{lemma:finitehonest}. Then we deal with undesirable possibilities inside $\Inf$ -- this is more difficult, occupying \Cref{lemma:QLNo}, \Cref{proposition:infline},  \Cref{proposition:Movesingular}, and \Cref{proposition:LinDom}.  \Cref{proposition:LinDom} in particular uses a limit-linear series argument.

\subsubsection{Finite base schemes}
\label{sec:understanding-s-cap}

\begin{lemma}
  \label{lemma:finite} Every point
  $(T, T^{*}, \Lambda) \in \Theta \cap \Fin$ satisfies:
  $(T,T^{*},\Lambda) \notin \Inc(p_{i})$ and
  $(T,T^{*},\Lambda) \notin \Lin(p_{i})$ for all
  $p_{i} \in \Gamma_{13}$.
\end{lemma}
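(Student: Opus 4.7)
My plan is to treat the two non-containments separately: the $\Lin$ case follows immediately from the definitions, while the $\Inc$ case reduces to a universal-family dimension count whose key input is the distinctness of $\Dom(p_{i})$ and $\Inc(p_{i})$ as irreducible components of $\BP(p_{i})$.

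For $\Lin(p_{i})$, I would argue as follows. Its defining open subset requires $T \in \Thin$, and since $\Thin$ is closed in $\Hilb_{3}\P^{2}$ (and pulls back to a closed subset of $\CT$), the entire irreducible component $\Lin(p_{i})$ lies in the locus $\{T \in \Thin\}$. But for $T \in \Thin$, \Cref{claim:support} shows that the base scheme of every element of $H^{0}(\P^{2}, \I_{\s(T,T^{*})}(5))$ is supported on the line $\langle T \rangle$; in particular $\Base(\Lambda)$ is infinite for any pencil $\Lambda$ in this space. Hence $\Lin(p_{i}) \subset \Inf$, so $\Theta \cap \Fin \cap \Lin(p_{i}) = \emptyset$.

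For $\Inc(p_{i})$, I would run a universal dimension count. By \Cref{lemma:threecomponents}, $\Dom(p_{i})$ and $\Inc(p_{i})$ are distinct irreducible components of $\BP(p_{i})$, each of dimension $24$ in the $26$-dimensional $\SQP$. So $\Dom(p_{i}) \cap \Inc(p_{i})$ is a proper closed subvariety of each and has dimension at most $23$. Letting $p_{i}$ vary over $\P^{2}$, the universal intersection
\[ \{(T,T^{*},\Lambda,p) \in \SQP \times \P^{2} : (T,T^{*},\Lambda) \in \Dom(p) \cap \Inc(p)\} \]
has dimension at most $23 + 2 = 25$. Next I would form the universal incidence
\[ \mathcal{W} := \{(T,T^{*},\Lambda,p_{1},\dots,p_{13}) \in (\SQP \cap \Fin) \times (\P^{2})^{13} : (T,T^{*},\Lambda) \in \Dom(p_{i}) \cap \Inc(p_{i}) \cap \bigcap_{j \ne i}\Dom(p_{j})\} \]
and project it to the coordinates $(T,T^{*},\Lambda,p_{i})$. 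The image lies in the $25$-dimensional set above, and over each image point the fiber is finite, since each remaining $p_{j}$ must lie in the finite scheme $\Base(\Lambda)$. Therefore $\dim \mathcal{W} \leq 25 < 26 = \dim (\P^{2})^{13}$, so the projection $\mathcal{W} \to (\P^{2})^{13}$ is not dominant; for generic $\Gamma_{13}$ there is no preimage, giving $\Theta \cap \Fin \cap \Inc(p_{i}) = \emptyset$.

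The main obstacle is really the bound $\dim(\Dom(p_{i}) \cap \Inc(p_{i})) \leq 23$, since it is exactly this single unit of strict inequality that beats the otherwise equal dimensions of source and target in the final projection to $(\P^{2})^{13}$. Fortunately the bound is essentially free once \Cref{lemma:threecomponents} is in hand, so the remainder of the argument reduces to routine dimension bookkeeping with the $\Fin$ constraint ensuring that the residual base points of $\Lambda$ contribute only finitely.
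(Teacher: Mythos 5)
Your proposal is correct, and it takes a mildly different route from the paper on one of the two cases.

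For $\Inc(p_i)$, your argument is essentially the same dimension count the paper performs, differing only in bookkeeping: the paper fixes $p_1$, bounds $U := \Dom(p_1) \cap \Inc(p_1) \cap \Fin$ by dimension $23$, and compares the quasi-finite universal incidence $U' \to (\P^2)^{12}$ against $\dim(\P^2)^{12} = 24$; you instead let all thirteen points vary and compare $25 < 26$. The decisive input is identical: the strict drop $\dim(\Dom(p)\cap\Inc(p)) \leq 23$ coming from \Cref{lemma:threecomponents} together with the finiteness of $\Base(\Lambda)$ on $\Fin$.

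For $\Lin(p_i)$, your argument is actually cleaner than the paper's, which only says this case ``proceeds mutatis mutandis'' by another dimension count. You observe that $\Lin(p_i)$ lies entirely in $\{T \in \Thin\}$ (by irreducibility of $\Lin(p_i)$ and closedness of the $\Thin$ locus), and that for such $(T,T^*)$ the line $\langle T\rangle$ is forced into $\Base(\Lambda)$ by \Cref{claim:support}, so $\Lin(p_i) \subset \Inf$ outright. This avoids any dimension count and in fact proves the stronger statement $\Lin(p)\cap\Fin = \varnothing$ for every $p$, not just for $p$ belonging to a general configuration.
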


\begin{proof}
  This follows from combining a dimension count with the condition
  that $\Gamma_{13}$ is a general set. We only prove the $\Inc$
  statement -- the other case proceeds mutatis mutandis.

  Without losing generality, we need only prove the statement
  $(T,T^{*},\Lambda) \notin \Inc(p_{1})$.  As $\Dom(p_1)$ is
  irreducible and $24$-dimensional,
  \[\dim \Dom(p_1) \cap \Inc(p_1)  \leq 23.\] And so, the locally closed
  set $U := \Dom(p_1) \cap \Inc(p_1) \cap \Fin$ is at most $23$-dimensional.

  Every $(T,T^{*},\Lambda) \in U$ determines the $0$-dimensional, length
  $25$ scheme $\Base(\Lambda) \subset \P^{2}$. Let
  \[U' \subset U \times (\P^{2})^{12}\] denote the set
  parametrizing quadruples $(T,T^{*},\Lambda,W)$ where
  $(T,T^{*},\Lambda) \in U$ and where $W = (w_1, \dots, w_{12})$ is a
  $12$-tuple of points in $\P^{2}$ satisfying $w_i \in \Base(\Lambda)$
  for all $i$. Then the forgetful map $U' \to U$ is quasi-finite, and
  hence \[\dim U' \leq 23.\]

  The map $U' \to (\P^{2})^{12}$ defined by
  $(T,T^{*}, \Lambda, W) \mapsto W$ therefore cannot dominate
  $(\P^{2})^{12}$, and hence the set of $13$ \emph{general} points
  $\{p_1, \dots, p_{13}\}$ cannot be contained in $\Base(\Lambda)$ for
  $(T,T^{*},\Lambda) \in \bigcap_{i=1}^{13}\Dom(p_{i}) \cap
  \Inc(p_{1}) \cap \Fin$, implying the lemma.
\end{proof}

\begin{lemma}
  \label{lemma:finitehonest}
  Let $(T,T^{*},\Lambda) \in \Theta \cap \Fin$. Then $T$ is an honest
  triangle, and $T \cap \Gamma_{13} = \varnothing$.
\end{lemma}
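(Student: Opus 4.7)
The lemma has two parts to verify: $T \cap \Gamma_{13} = \varnothing$ and $T$ is an honest triangle. The plan is to obtain the first directly from Lemma \ref{lemma:finite}, and handle the second by first excluding $T \in \Thin$ via Claim \ref{claim:support} (from the proof of Proposition \ref{proposition:BPp}) and then excluding the remaining non-honest orbits via a dimension count in $\SQP$.

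For the first part, I would argue by contradiction: if some $p_i \in T$, then $(T,T^*,\Lambda)$ satisfies the defining set-theoretic condition of $\Inc(p_i)$, placing it in $\Inc(p_i)$ and contradicting Lemma \ref{lemma:finite}. This forces $T \cap \Gamma_{13} = \varnothing$.

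For the second part, I would first rule out $T \in \Thin$: if $T$ is thin, Claim \ref{claim:support} gives that every section of $W = H^{0}(\P^{2}, \I_{\s(T,T^*)}(5))$ vanishes on the line $\langle T \rangle$, so $\Base(\Lambda) \supset \langle T \rangle$ is infinite, contradicting $\Lambda \in \Fin$. To handle the remaining non-thin non-honest orbits (C), (D), (E), (G) of $\Hilb_{3}\P^{2}$, let $N \subset \CT$ denote the non-honest locus. An orbit-by-orbit check gives $\dim N \leq 5$: every non-honest orbit in $\Hilb_{3}\P^{2}$ is at most $5$-dimensional, and although $\Fat$ itself is only $2$-dimensional, its exceptional divisor in $\CT = \Bl_{\Fat}\Hilb_{3}\P^{2}$ is $5$-dimensional (from the rank-$4$ normal bundle). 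Since $\varphi : \SQP \to \CT$ is a $\Gr(2,12)$-bundle with $20$-dimensional fibers, $\varphi^{-1}(N) \cap \Fin$ has dimension at most $25$. I would then form the incidence
\[
I = \{(x, (p_{1}, \dots, p_{13})) \in (\varphi^{-1}(N) \cap \Fin) \times (\P^{2})^{13} : p_{i} \in \Base(\Lambda_{x}) \text{ for all } i\},
\]
and observe that the projection $I \to \varphi^{-1}(N) \cap \Fin$ has $0$-dimensional fibers (since $x \in \Fin$ makes $\Base(\Lambda_{x})$ finite), so $\dim I \leq 25 < 26 = \dim (\P^{2})^{13}$. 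The projection $I \to (\P^{2})^{13}$ is thus not dominant, so for a general $\Gamma_{13}$ we have $\Theta \cap \Fin \cap \varphi^{-1}(N) = \varnothing$, forcing $T$ to be honest.

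The main delicate point is the dimension bound $\dim N \leq 5$, and the subtle contribution is from the Fat orbit: it is only $2$-dimensional in $\Hilb_{3}\P^{2}$, but its preimage in $\CT$ acquires three extra dimensions from the projectivized rank-$4$ normal bundle. The budget $25 < 26$ leaves exactly one dimension to spare, matching the codimension of $N$ in $\CT$ — a thin margin, but enough for the incidence argument to succeed.
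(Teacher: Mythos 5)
Your proposal is correct and follows essentially the same route as the paper: the first claim is read off directly from \Cref{lemma:finite}, thinness is excluded using the structure of the base locus, and the honest-triangle claim comes from the same incidence-variety dimension count (a codimension-$1$ locus in $\CT$ pulls back to a $25$-dimensional locus in $\SQP$, whose quasi-finite incidence with $(\P^2)^{13}$ cannot dominate the $26$-dimensional target). The only cosmetic difference is that the paper notes the non-reduced locus is a Cartier divisor to get the dimension bound in one stroke, whereas you bound the full non-honest locus orbit by orbit (including the $5$-dimensional exceptional divisor over $\Fat$), making your separate thin-exclusion step logically redundant but harmless.
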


\begin{proof}
  The statement ``$T \cap \Gamma_{13} = \varnothing$" follows
  immediately from \Cref{lemma:finite}, so we will assume it and
  show that $T$ is an honest triangle.

  Pick $(T,T^{*},\Lambda) \in \Theta \cap \Fin$. As $\Base(\Lambda)$ is
  finite, it follows that $T \notin \Thin$ (as otherwise $\Base(\Lambda)$
  contains the line spanned by $T$). Therefore, we need only show that $T$ is reduced.
  
  As in the proof of \Cref{lemma:finite}, we perform a
  dimension count.  The locus $V \subset \Fin$ consisting of triples
  $(T,T^{*},\Lambda)$ where $T$ is non-reduced is a Cartier
  divisor. And so, $\dim V = 25$. Let
  \[V' \subset V \times (\P^{2})^{13}\] denote the scheme
  parametrizing quadruples $(T,T^{*},\Lambda, W)$, where
  $(T,T^{*},\Lambda) \in V$ and $W = (w_1, \dots, w_{13})$ is a 
  $13$-tuple of points satisfying $w_{i} \in \Base(\Lambda)$ for all
  $i$. Since the forgetful map $V' \to V$ is quasi-finite, it follows
  that $\dim V' = 25$. Therefore the second projection
  $V' \to (\P^{2})^{13}$ cannot be dominant, and in particular cannot
  contain the general tuple $(p_1,\dots, p_{13})$ in its image, implying the lemma.
\end{proof}

\subsubsection{Dealing with infinite base schemes}
\label{sec:inf-considerations}

We now take on the challenge of showing emptiness of $\Theta \cap \Inf$.

\begin{definition}
  \label{definition:1dpart} 
  \begin{enumerate}
    \item If $\Lambda \subset H^{0}(\P^2, \O(d))$ is
    a pencil of degree $d$ curves, with base scheme $\Base(\Lambda)$, we
    define the {\bf fixed curve} of $\Base(\Lambda)$ to be the Cartier
    divisor on $\P^{2}$ defined by the greatest common factor of any two
    general elements of $\Lambda$. 
    \item If the fixed curve of $\Lambda$ has
    degree $e$, we define the {\bf moving part} of $\Lambda$, denoted $\Lambda'$, to be the
    pencil of degree $d-e$ curves obtained by dividing the equations of
    the members of $\Lambda$ by the equation of the fixed curve.
    \item If $\Lambda$ is a pencil of curves with fixed curve $C$, then a {\bf fixed point} of $\Lambda$ will mean a point in $C \setminus \Base(\Lambda')$.
    \item If $\Lambda$ is a pencil of curves with fixed curve $C$, then an {\bf isolated point} of $\Lambda$ will mean a point in $\Base(\Lambda') \setminus C$.
    \item If $\Lambda$ is a pencil of curves with fixed curve $C$, then an {\bf embedded point} of $\Lambda$ will mean a point in $C \cap \Base(\Lambda')$.
  \end{enumerate}
\end{definition}

\begin{lemma}
    \label{lemma:QLNo}
    Suppose $\Lambda$ is a pencil of quintic curves whose fixed curve is a quartic $Q$.
    Suppose furthermore that $\Lambda$ satisfies one of the following: 
    \begin{enumerate}
        \item[(a)] $\Base(\Lambda)$ has no embedded points and $\Sing Q$ consists of at most two points, each a node,
        \item[(b)] $\Base(\Lambda)$ has no embedded points and $\Sing Q$ consists of a single ordinary cusp,
        \item[(c)] $\Base(\Lambda)$ has one embedded point at a smooth point of $Q$ while $\Sing Q$ has at most one singular point which is a node,
        \item[(d)] $\Sing Q$ consists of a single node and this node is the embedded point of $\Base(\Lambda)$.
    \end{enumerate}

    Then $\Base (\Lambda)$ does not contain any subscheme of the form $\s(T,T^{*})$ for any $(T,T^{*}) \in \CT$.
\end{lemma}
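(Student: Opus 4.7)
Since $Q$ is a quartic and $\Lambda$ consists of quintics, the moving pencil $\Lambda'$ is a pencil of lines and so has a single base point $p$. Writing members of $\Lambda$ locally as $q\ell$ and $q\ell'$, the ideal sheaf of $\Base(\Lambda)$ at a point $a$ equals $(q)$ if $a \in Q \setminus \{p\}$, equals $\m_p$ if $a = p \notin Q$, and equals $q \cdot \m_p$ if $a = p \in Q$ (the embedded-point case). The key observation underpinning the proof is that $\s(T,T^{*})$ locally contains the fat point $V(\m_a^2)$ at every support point $a$ of $T$: indeed, $\I_{\s(T,T^{*})} = \I_T^2 + (\gamma(T^{*}))$ is contained in $\m_a^2$ locally at $a$, since $\I_T \subset \m_a$ and $\gamma(T^{*})$ is a cubic, hence lies in $\m_a^3 \subset \m_a^2$.

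\textbf{Support reduction.} If $\s(T,T^{*}) \subset \Base(\Lambda)$, then $V(\m_a^2) \subset \Base(\Lambda)$ locally at each support point $a$ of $T$. Examining the three local forms of $\I_{\Base(\Lambda)}$ above, this forces each such $a$ to satisfy either $a \in \Sing(Q)$ (so that $q \in \m_a^2$) or $a = p$ with $p \in Q$ (the only case in which the mixed ideal $q \cdot \m_p$ automatically sits in $\m_p^2$). In each of the hypotheses (a)--(d), the admissible set $\Sing(Q) \cup (\{p\} \cap Q)$ has cardinality at most $2$. Since $T$ in orbit (A) or (B) has three distinct support points, these orbits are immediately ruled out, disposing of the honest-triangle and collinear-triple cases.

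\textbf{Remaining orbits.} For the orbits (C), (D), (E), (F), (G) of $T$ the support has only one or two points, so the counting above is insufficient, and one must compare the full local scheme structure of $\s(T,T^{*})$ at its ``long'' support point with the local ideals of $\Base(\Lambda)$. I plan to carry this out orbit by orbit using the explicit normal forms developed in \S\ref{sec:completetriangles}. For orbits (C), (D) the length-$6$ local part of $\s(T,T^{*})$ has ideal $(x, y^2)^2 = (x^2, xy^2, y^4)$ in coordinates adapted to the length-$2$ tangent direction of $T$; a direct polynomial check shows that neither a node equation $xy$, nor a simple-cusp equation $y^2 - x^3$, nor the smooth-embedded ideal $q \cdot \m_p$ fits inside this ideal. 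For orbit (G) the local ideal $\m^4 + (\text{cubic asterisk})$ can contain a local equation of $Q$ only if $a$ is a triple point of $Q$, which is excluded in all four hypotheses. Orbits (E) and (F) are handled similarly, with even deeper local ideals forcing worse singularities than permitted.

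\textbf{Main obstacle.} The subtlest part will be the subcases in which the long support point of $\s(T,T^{*})$ coincides with the embedded point $p \in Q$ (in situations (c) and (d)): here the factor $\m_p$ in $q \cdot \m_p$ provides genuine extra flexibility compared with the unpinched ideal $(q)$, so the support-level reduction does not suffice. The contradiction must be extracted by a tangent-direction alignment argument: in case (c) the required alignment forces both coefficients of the linear part of $q$ to vanish, contradicting smoothness of $Q$ at $p$; in case (d) the analogous analysis shows the length-$6$ part can indeed sit at the node $p$, but then the residual length-$3$ fat point of $\s(T,T^{*})$ at the other support point has no singular point of $Q$ available to absorb it, contradicting distinctness of the two support points. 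Executing this battery of local computations cleanly, especially juggling the tangent-alignment obstructions in the embedded-point subcases, is the principal combinatorial difficulty.
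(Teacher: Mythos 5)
Your architecture coincides with the paper's: $\s(T,T^*)$ has $2$-dimensional Zariski tangent space at every support point of $T$, so those support points must lie among the singular and embedded points of $\Base(\Lambda)$; this rules out orbits (A) and (B) outright (three support points, at most two admissible positions in each of hypotheses (a)--(d)), and the remaining orbits are dispatched by local ideal comparisons. One small correction to your justification: the claim that $\gamma(T^*) \in \m_a^3$ is false already for an honest triangle, where the cubic $\gamma(T^*)$ (the three sides) vanishes only to order $2$ at each vertex, and it fails for most other orbits as well; what actually holds, and suffices, is $\gamma(T^*) \in \m_a^2$, giving $\I_{\s(T,T^*)} = \I_T^2 + (\gamma(T^*)) \subset \m_a^2$ (or one can simply cite semicontinuity of tangent-space dimension along a degeneration from honest triangles).

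The genuine gap is that you defer the local computations, and your sketch glosses over exactly the cases where the argument is not a formality. For orbits (E)/(F) under hypothesis (b), the cusp, your heuristic ``deeper local ideals forcing worse singularities'' breaks down: a cusp germ $g$ and the curvilinear ideal $(u^2, uv^3, v^6)$ (in suitable analytic coordinates) both have degenerate quadratic part, so no multiplicity or membership-in-$\m^k$ comparison excludes $g \in (u^2, uv^3, v^6)$. The paper needs a genuine order-of-vanishing argument: writing $g = u^2(1 + h_1) + h_2 uv^3 + h_3 v^6$, one checks that for any $u(t), v(t) \in \k\llbracket t \rrbracket$ with positive order, $\ord g(u(t),v(t))$ can never equal $3$, contradicting that a cusp branch admits a parametrization of order $3$. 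Likewise, in the embedded-point hypotheses (c) and (d) the decisive step is counting independent elements of $\I_{\s(T,T^*)}$ modulo $\m^3$ (case c) or $\m^4$ (case d) and comparing with the ideal $(xf,yf)$, run separately for fat versus curvilinear $T$; your proposal treats these uniformly, but the fat ideal lies inside $\m^3$ outright while the curvilinear one does not, so the two require genuinely different arguments. In short, the plan has the right shape, and the content is missing precisely at the points where the proof requires an idea.
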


\begin{proof}
    The moving part $\Lambda'$ of $\Lambda$ is a pencil of lines; we let $b \in \P^{2}$ denote the base point $\Lambda'$.  Then, $\Base (\Lambda)$ contains an embedded point if and only if $b \in Q$, in which case $b$ is the sole embedded point. Before taking each case up in turn, observe that as $\s(T,T^{*})$ has $2$-dimensional Zariski tangent space at each of its points, if $\s(T,T^{*}) \subset \Base(\Lambda)$ then $T$ must be supported on the singular points of $Q$ or on the embedded point of $\Base(\Lambda)$, if it exists (or both).
    \begin{enumerate}
        \item[(a)] In this scenario, one of the two nodes, call it $n \in Q$, must support a length $\geq 2$ connected component $T_{n} \subset T$ of $T$. 
        
        If $T_{n}$ has length $2$, then in affine coordinates $(x,y)$ around $n$ the ideal $\I_{T_{n}}$ is $(y,x^{2})$ and so the ideal $\I_{\s(T,T^{*})}$ is given by $(y^{2},yx^{2},x^{4})$.  The ideal $\I_{\s(T,T^{*})}$ does not contain an element with non-degenerate quadratic part, and so $Q$'s local equation cannot be contained it, eliminating this case.  
        
        If $T_{n}$ has length $3$ and is curvilinear, then in suitable \emph{analytic} local coordinates $(u,v)$ around $n$, the ideal $\I_{T_{n}}$ can be taken to be $(u,v^{3})$, and so $\I_{\s(T,T^{*})} = (u^{2},uv^{3},v^{6})$.  Once again, an analytic local equation for $Q$ cannot  be contained in $\I_{\s(T,T^{*})}$ because it would have a non-degenerate quadratic term. 
        
        Finally, if $T_{n}$ is a fat point, then $\s(T,T^{*})$ (for any $T^{*}$) is contained in the cube of the maximal ideal $\m \subset \O_{\P^{2},n}$, while $Q$'s local equation lies in $\m^{2} \setminus \m^{3}$.  And so in all possible scenarios, $Q$'s local equation cannot be contained in $\I_{\s(T,T^{*})}$, which is what we needed to show.
        
        \item[(b)] Let $c \in Q$ denote the cusp. If $\s(T,T^{*}) \subset \Base(\Lambda)$ then $T$ is entirely supported on $c$, and hence $T$ is either fat or curvilinear. 
        
        If $T$ is a fat point, then $\I_{\s(T,T^{*})}$ is contained in the cube of the maximal ideal $\m \subset \O_{\P^{2},c}$, yet $Q$'s local equation is contained in $\m^{2} \setminus \m^{3}$ (a cusp is a double point) -- so the fat possibility is eliminated. 
        
        If $T$ is curvilinear, then in suitable analytic local coordinates $(u,v)$ around $c$, we have $\I_{T} = (u,v^{3})$ and so $\I_{\s(T,T^{*})} = (u^{2},uv^{3},v^{6})$. Suppose $g(u,v)$ is an analytic local equation for $Q$. If $g \in \I_{\s(T,T^{*})}$ then, because a cusp is a double point, after scaling by an element in $\k^{\times}$ we must have $$g = u^{2} + h_1(u,v) \cdot u^{2} + h_2(u,v) \cdot uv^{3} + h_{3}(u,v) \cdot v^{6},$$ where the $h_{i}$ are power series and where $h_{1}$ has no constant term.  Now observe that there are no power series $$u(t),v(t) \in \k \llbracket t \rrbracket$$ with vanishing constant terms such that $$\ord g(u(t),v(t)) = 3,$$ a necessary condition for the germ $g$ to define a cusp. (Here the order $\ord$ of a power series with variable $t$ is the degree of the first non-zero term.) Thus the germ of a defining equation of $Q$ at $c$ cannot be contained in $\I_{\s(T,T^{*})}$, eliminating this curvilinear possibility. 

        \item[(c)] Hypothetically, if  $\s(T,T^{*}) \subset \Base (\Lambda)$ then our first claim is that $T$ must be entirely supported on the embedded point $b \in Q$:  Let $n \in Q$ be the node of $Q$ if it exists. By what is written immediately prior to the proof of part (a),  $T$ is supported somewhere in the set $\{b,n\}$. Let $T_{b}, T_{n}$ denote the connected components of $T$ supported on the respective points.  If $\length T_{n} \geq 2$, we argue as in part (a) to conclude that $\I_{\s(T,T^{*})}$ cannot contain a defining equation for $Q$.  If $\length T_{b} = 2$, then in local coordinates $(x,y)$ near $b$ the ideal $\I_{\s(T,T^{*})}$ is $(x^{2},xy^{2},y^{4})$. If  $f$ is a local defining equation of $Q$ near $b$, then the local ideal of $\Base (\Lambda)$ is given by $$\mathcal{J} := (xf,yf).$$  This ideal $\mathcal{J}$ has two linearly independent quadratic elements, modulo $(x,y)^{3}$, while $\I_{\s(T,T^{*})}$ does not.  Thus $\s(T,T^{*}) \nsubset \Base (\Lambda)$ in this case. 
        
        Therefore, we may assume that $T$ is entirely supported at the point $b$.  There are now two cases to investigate: either $T$ is a fat point, or $T$ is curvilinear. 
        
        In the fat case, $\I_{\s(T,T^*)}$ is contained in the cube of the maximal ideal $\m = (x,y) \subset \O_{\P^{2},b}$, while the two generators of $\mathcal{J}$ are not ($Q$ is smooth at $b$). So $\s(T,T^{*}) \not\subset \Base(\Lambda)$ in this case. 
        
        On the other hand, if $T$ is curvilinear a calculation as in the proof of part (b) implies that, modulo $\m^{3}$, $\I_{\s(T,T^{*})} \cap \m^{2}$ consists of a single quadratic form on $(\m/\m^{2})^{\vee}$ up to scaling. However, $\mathcal{J} \cap \m^{2}$ has two linearly independent quadratic forms modulo $\m^{3}$, and thus again $\s(T,T^{*}) \not\subset \Base (\Lambda)$. So our hypothetical situation is impossible, as we needed to show.

        \item[(d)] Let $n \in Q$ denote the node, let $x,y \in \O_{\P^{2},n}$ be local affine coordinates of $\P^{2}$ near $n$, and let $f(x,y) \in \O_{\P^{2},n}$ denote a local defining equation of $Q$.  If, hypothetically, $\s(T,T^{*}) \subset \Base (\Lambda)$, then $T$ must be entirely supported at $n$.  As in the proof of part (c), we consider the two possibilities: $T$ is either a fat point or is curvilinear with length $3$.  If $T$ is fat, then $\I_{\s(T,T^{*})}$ is contained in $\m^{3}$, where $\m = (x,y)$ is the maximal ideal. Furthermore, $\I_{\s(T,T^{*})}$ contains exactly one element (up to scale) in $\m^{3}$, modulo $\m^{4}$. However, the ideal $\mathcal{J} := (xf,yf)$ contains two $\k$-linearly independent such elements. Thus, $T$ cannot be fat.  

        If $T$ is curvilinear we observe that, modulo $\m^{4}$, the elements of $\I_{\s(T,T^{*})} \cap \m^{3}$ are binary cubic forms (on $(\m/\m^{2})^{\vee}$) all sharing a common factor which is a perfect square. This can be checked after passing to (any) analytic local coordinates $(u,v)$ -- we can choose convenient coordinates where $\I_{T} = (u,v^{3})$ and so $\I_{\s(T,T^{*})} = (u^2, uv^3,v^6)$.  Indeed, modulo $(u,v)^4$ all elements in $\I_{\s(t,T^{*})} \cap (u,v)^{3}$  are multiples of $u^{2}$.  This property of having a perfect square common factor is not shared by the ideal $\mathcal{J}$, and hence $\s(T,T^*) \not\subset \Base (\Lambda)$, eliminating this curvilinear possibility.
        \end{enumerate}
\end{proof}

\begin{proposition}
  \label{proposition:infline} If $(T,T^{*},\Lambda)$  is an element of $\Theta \cap \Inf$ then
  the fixed curve of $\Base(\Lambda)$ consists of a reduced
  line.
\end{proposition}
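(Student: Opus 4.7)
The plan is to argue case by case on $e = \deg C \in \{1, 2, 3, 4\}$, showing that $e \in \{2, 3, 4\}$ leads to a contradiction, so that $e = 1$ and the fixed curve (being a Cartier divisor of degree $1$) is automatically a reduced line. Write $\Lambda = C \cdot \Lambda'$ where $\Lambda'$ is the moving pencil of degree $5 - e$. Since $\Lambda \subset H^{0}(\I_{\s(T,T^{*})}(5))$ and $(T, T^{*}, \Lambda) \in \Theta$, we have $\s(T, T^{*}) \cup \{p_{1}, \ldots, p_{13}\} \subset \Base(\Lambda) = C \cup \Base(\Lambda')$ and every member $C \cdot Q'$ of $\Lambda$ is singular at each point of $T$.

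For $e = 4$, $\Lambda'$ is a pencil of lines through a single base point $b$, and at least $12$ of the $p_{i}$ lie on the quartic $C$. The $2$-parameter family of quartics through $12$ general points contains only mildly singular members --- at worst two nodes or one ordinary cusp --- since more severe singularity types impose codimension $\geq 3$ conditions in this family. Coupled with the possibility that $b$ provides an embedded point at a smooth point of $C$ or at the unique node, all resulting configurations fall into cases (a)--(d) of \Cref{lemma:QLNo}, which then produces the contradiction $\s(T, T^{*}) \not\subset \Base(\Lambda)$.

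For $e = 3$ and honest $T = \{t_{1}, t_{2}, t_{3}\}$, a pointwise analysis shows that each $t_{j}$ must lie on $C$: if any $t_{j}$ were off $C$, then every member of $\Lambda'$ would have to be singular at $t_{j}$, forcing the entire length-$4$ base (by B\'ezout) of $\Lambda'$ to concentrate at $t_{j}$, so the remaining $t_{k}$ for $k \neq j$ could not lie in $\Base(\Lambda')$ and $C$ would have to be singular there --- impossible since the cubic $C$, being uniquely determined by $9$ of the $p_{i}$, is generically smooth. So $T \subset \Base(\Lambda')$. Using $|\Base(\Lambda')| \leq 4$ and $T \cap \{p_{i}\} = \emptyset$, at most $4 - 3 = 1$ of the $p_{i}$ can sit in $\Base(\Lambda') \setminus T$, forcing $\geq 12$ of them onto $C$ and contradicting the bound of $9$ general points on a cubic. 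The case $e = 2$ is analogous, using $|\Base(\Lambda')| \leq 9$ for cubic pencils: at most $9 - 3 = 6$ of the $p_{i}$ lie in $\Base(\Lambda') \setminus T$ and at most $5$ on the conic, totaling $\leq 11 < 13$. Non-honest $T$ is handled by noting that a thin $T$ already places the line $\langle T \rangle$ into the fixed curve (reducing to the analysis of any residual component), while a fat $T$ demands localized base multiplicity of $9$ incompatible with B\'ezout for $\Lambda'$ of degree $\leq 3$.

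The main obstacle is the quartic case, where pure dimension counts are too coarse --- the $16$-parameter family of choices $(C, \Lambda')$ roughly balances the constraints imposed by $\s(T, T^{*})$ and the $p_{i}$ --- so the detailed singularity analysis of \Cref{lemma:QLNo} is genuinely necessary to rule out the borderline configurations. Bookkeeping for non-honest $T$ in the cubic and conic cases is a secondary but manageable complication.
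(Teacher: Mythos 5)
Your quartic case is essentially the paper's argument: isolate the pencil or net of quartics through $12$ or $13$ of the $p_i$, use generality of $\Gamma_{13}$ to constrain the singularities of $C$ to at most two nodes or one ordinary cusp and the location of the moving base point, and invoke \Cref{lemma:QLNo} to rule out $\s(T,T^{*}) \subset \Base(\Lambda)$. That part is fine.

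For $e = 3$ and $e = 2$, however, you take a genuinely different route — arguing through the support of $T$ rather than pinning down $\Base(\Lambda)$ — and this route has several genuine gaps. First, you invoke $T \cap \{p_1, \dots, p_{13}\} = \varnothing$. This is established in the paper only for $\Theta \cap \Fin$ (\Cref{lemma:finite}, \Cref{lemma:finitehonest}); it is \emph{not} available a priori for $\Theta \cap \Inf$, since $\Dom(p_i)$ is defined as a closure and its boundary points can lie in $\Inc(p_i)$. Second, your argument that each point of $T$ lies on $C$ in the cubic case appeals to ``$C$, being uniquely determined by $9$ of the $p_i$, is generically smooth'' — but that $C$ passes through $9$ of the $p_i$ is precisely what you are trying to deduce at that step, so the reasoning is circular. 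Third, your treatment of non-honest $T$ addresses only the thin and fat orbits; orbits (C), (D), (E) (a reduced point plus a nonreduced length-$2$ component, or a curvilinear length-$3$ scheme) are not covered, and your fat-case remark conflates $\Base(\Lambda')$ with $\Base(\Lambda)$.

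The paper's argument for $e = 2, 3$ sidesteps all of this. Rather than tracking $T$, it uses generality of $\Gamma_{13}$ to determine $\Base(\Lambda)$ completely: for $e = 3$, it is a smooth cubic union four reduced points in general position; for $e = 2$, a smooth conic union nine reduced points off the conic. In both cases $\Base(\Lambda)$ is a smooth scheme, whereas $\s(T,T^{*})$ has a $2$-dimensional Zariski tangent space at every point of its support. This single tangent-space observation excludes $\s(T,T^{*}) \subset \Base(\Lambda)$ for \emph{every} $(T,T^{*}) \in \CT$ at once, with no case split on the type of $T$ and no need to know whether $T$ meets $\Gamma_{13}$. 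If you want to salvage your $T$-based counting, you would need to (i) drop the unjustified assumption $T \cap \Gamma_{13} = \varnothing$ and redo the counting, (ii) prove $T \subset C$ without presupposing smoothness of $C$, and (iii) handle all seven $\PGL(3)$-orbit types of $T$, not just the three you mention — at which point the paper's direct description of $\Base(\Lambda)$ is considerably simpler.
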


\begin{proof}
  We proceed by considering one by one the possible degrees of the
  fixed curve $C \subset \Base(\Lambda)$. Each case is resolved by the tension between 
  generality of $\Gamma_{13} = \{p_1, \dots, p_{13}\}$ (which by assumption is contained in $\Base(\Lambda)$) on the one hand, and the number of points in the intersection $\Gamma_{13} \cap C$ on the other. For ease of reading, let $\Gamma_{C} := \Gamma_{13} \cap C$.

\begin{enumerate}
  \item \emph{Assuming $\deg (C) = 4$},  the moving part of $\Lambda$ is a pencil of
  lines. Furthermore, $C$ must be reduced, as at most $5$ of the
  points of $\Gamma_{13}$ can lie on a conic. There are
  two possibilities for the number $\# \Gamma_{C}$: either $12$ or $13$, depending on whether the basepoint of $\Lambda'$ is in $\Gamma_{13}$ or not. Suppose first that $\# \Gamma_{C} = 13$. The unique, general, pencil of quartics
  determined by $\Gamma_{C}$ has finitely many singular members, each
  with a {\sl single} node located away from
  $\Gamma_{C} = \Gamma_{13}$. However, this prevents any length $9$
  scheme of the form $\s(T,T^{*})$ from being contained in $\Base(\Lambda)$, by parts (a), (c) and (d) of \Cref{lemma:QLNo}.  
  
  If
  $\# \Gamma_{C} = 12$, the argument is similar: the net of quartics
  determined by $\Gamma_{C}$ has singular members of only three types:
  (1) a nodal curve with a unique node, (2) a curve with exactly two
  nodes for singularities, and (3) a curve with a unique ordinary cusp.  Accordingly, $C$ is either smooth or has qualities (1), (2), or (3) just listed.  
  
  The thirteenth point not contained in $C$ must be the unique 
  basepoint of the moving part of $\Lambda$. So, by virtue of tangent space considerations, the only way
  for the length $9$ scheme $\s(T,T^{*})$ to be contained in
  $\Base(\Lambda)$ is if $\s(T,T^{*}) \subset C$. However, this is precluded by parts (a) and (b) of \Cref{lemma:QLNo}.

  \item \emph{Assuming $\deg (C) = 3$}, the moving part $\Lambda'$ of $\Lambda$ is a pencil of
  conics. As before, $C$ must be reduced, as $\Lambda'$ has at
  most $4$ of the points of $\Gamma_{13}$ in its base scheme, and so  the remaining points of $\Gamma_{13}$ cannot be contained in a line.
  In fact, this type of reasoning shows there is only one {\sl a priori}
  possibility: $\# \Gamma_{C} = 9$ and $C$ is the unique (smooth)
  cubic curve determined by $\Gamma_{C}$. The moving part $\Lambda'$ of $\Lambda$ is then a
  pencil of conics with base-scheme consisting of the four points 
  $\Gamma_{13} \setminus \Gamma_{C}$. The pencil $\Lambda$ 
  does not contain a subscheme of the form $\s(T,T^{*})$ in its base
  scheme $\Base(\Lambda)$ again by tangent space considerations, because $\Base(\Lambda)$ is the union of the smooth curve $C$ and the four reduced points $\Gamma_{13} \setminus \Gamma_{C}$.

  \item \emph{Assuming $\deg (C) = 2$}, then once again $C$ must be reduced as is seen by
  an argument similar to that found in the previous paragraph.  There
  are only two {\sl a priori} possibilities for $\# \Gamma_{C}$:
  $\# \Gamma_{C} = 4$ or $5$. Now, $\# \Gamma_{C}$ cannot be $4$,
  because then the moving part of $\Lambda$, a pencil of cubics, must
  contain all $9$ of the points $\Gamma_{13} \setminus \Gamma_{C}$ in
  its base scheme, contrary to the general nature of $\Gamma_{13}$.
  Thus $\# \Gamma_{C} = 5$ and $C$ is the unique smooth conic
  determined by $\Gamma_{C}$.  The remaining $8$ points of
  $\Gamma_{13} \setminus \Gamma_{C}$ define a general pencil of
  cubics, and this pencil of cubics is then the moving part $\Lambda'$ of $\Lambda$. $\Lambda'$ has
  a $9$-th basepoint not contained in $C$ (again because $\Gamma_{13}$ is
  general). Thus $\Base(\Lambda)$ is the smooth scheme consisting of
  the smooth conic $C$ and $9$ reduced points not contained in $C$. This base
  scheme evidently does not contain any subscheme of the form
  $\s(T,T^{*})$, again by tangent space considerations.
\end{enumerate}
  We've finished the analysis of all cases, and the proposition
  follows.
\end{proof}

\begin{proposition}
    \label{proposition:Movesingular} Suppose $(T,T^{*},\Lambda) \in \Theta \cap \Inf$.  Then the moving part $\Lambda'$ of $\Lambda$ is a pencil of quartics whose general member is a smooth quartic curve.
\end{proposition}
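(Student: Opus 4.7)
By Bertini's theorem applied to the pencil $\Lambda'$, the general member of $\Lambda'$ is smooth away from $\Base(\Lambda')$. Hence if the general member of $\Lambda'$ were singular, there would be a point $p$ at which every member of $\Lambda'$ is singular, equivalently $V(\m_p^2) \subseteq \Base(\Lambda')$, a fat scheme of length $3$. The plan is to combine this with the constraints on $\Base(\Lambda')$ forced by $\Gamma_{13}$ and $\s(T,T^{*})$ to derive a dimension contradiction.

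Writing $\Lambda = L \cdot \Lambda'$ with $L$ the reduced fixed line guaranteed by \Cref{proposition:infline}, the inclusion $\Gamma_{13} \cup \s(T,T^{*}) \subseteq L \cup \Base(\Lambda')$ forces $\Base(\Lambda')$ to contain the scheme-theoretic residual $R := \mathrm{res}(\Gamma_{13} \cup \s(T,T^{*}),\, L)$. Using the standard length identity $\ell(Z) = \ell(Z \cap L) + \ell(\mathrm{res}(Z,L))$, I would bound $\ell(R)$ from below orbit by orbit in $T$. A pencil in $H^{0}(\P^{2}, \O(4))$ requires its base to impose at most $13 = \binom{6}{2} - 2$ independent conditions on quartics, so the contradiction follows whenever $R$ together with $V(\m_p^2)$ imposes more.

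The case analysis proceeds orbit by orbit. For the thin orbits (B), (D), (F), a direct homogeneous computation in degree $5$ shows that every element of $H^{0}(\I_T^{2}(5))$ is divisible by the line $L_T$ containing $T$ (for instance $\I_T = (X, Y^{3})$ gives $\I_T^{2} = (X^2, XY^{3}, Y^{6})$ and any degree-$5$ member must be $X \cdot (XA + Y^{3}B)$), so $L = L_T$ and $\mathrm{res}(\I_T^{2}, L_T) = \I_T$; hence $\ell(R) \geq 3 + |\Gamma_{13} \setminus L_T| \geq 3 + 11 = 14$, already ruling out the pencil without even invoking the extra fat point. For the honest-triangle case, the residual of $V(\m_v^{2})$ to $L$ has length $3$ if $v \notin L$ and length $1$ if $v \in L$ (since $V(\m_v^2) \cap L$ has length $2$ when $v \in L$); combined with $|\Gamma_{13} \setminus L| \geq 11$ (by generality, since no three points of $\Gamma_{13}$ are collinear), the length $\ell(R)$ exceeds $13$ in every sub-case, again ruling out the pencil. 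Parallel scheme-theoretic residual computations cover the curvilinear orbits (C), (E) and the fat orbit (G). The main technical obstacle I foresee is the degenerate edge case where the hypothetical common singularity $p$ coincides with a support point of $\s(T,T^{*})$, so that $V(\m_p^2)$ adds no new length to $\Base(\Lambda')$; in that sub-case the contradiction must be extracted from a finer independence-of-conditions argument, exploiting the disjointness $T \cap \Gamma_{13} = \emptyset$ built into the definition of $\Dom(p_i)$.
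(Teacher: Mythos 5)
Your proposal takes a genuinely different route from the paper's. The paper argues by a dimension count on a parameter space: it introduces the $24$-dimensional variety $W$ of pairs $(L, \Pi)$ where $L$ is a line and $\Pi$ is a pencil of quartics sharing a common singular point, observes that generality of $\Gamma_{13}$ forces the relevant $(L, \Pi)$ to be a \emph{general} point of $W$ (exactly $2$ points of $\Gamma_{13}$ on $L$, exactly $11$ in $\Base(\Pi)$), and concludes because $\Base(\Pi)$ for general $(L,\Pi)$ is one point of multiplicity $4$ together with $12$ reduced points --- far too reduced to contain the everywhere-non-reduced, length-$9$ scheme $\s(T,T^{*})$. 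Your approach instead tries to reach a contradiction by B\'ezout/residual reasoning: $\Base(\Lambda')$ has length exactly $16$, and the residual $R$ of $\Gamma_{13}\cup\s(T,T^{*})$ to $L$ together with the fat point $V(\m_p^{2})$ ought to overflow it. That idea is reasonable, and the Bertini step at the start (a pencil whose general member is singular must have a common singular base point) is correct.

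The gaps come in the execution. The quantitative claim you lean on --- ``$\ell(R)\geq 14$, already ruling out the pencil'' --- is not correct as stated. Since $\ell(\Base(\Lambda'))=16$, a B\'ezout contradiction requires length $>16$, not $\geq 14$. Your alternative criterion (that $R\cup V(\m_p^{2})$ imposes more than $13$ \emph{independent} conditions on quartics) is also not a consequence of $\ell(R)\geq 14$: length and number of independent conditions imposed are not the same, and verifying independence is exactly the hard part here, since $T$ is part of the unknown solution and cannot be assumed to be in general position relative to $\Gamma_{13}\setminus L$. Similarly, your honest-triangle sub-cases stop at ``$\ell(R)$ exceeds $13$,'' which by itself rules out nothing. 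You flag the degenerate case where the hypothetical common singularity $p$ lands on $\Supp\s(T,T^{*})$, but you do not resolve it; the paper's dimension count disposes of all such coincidences at once by forcing $(L,\Pi)$ to be generic. Finally, you invoke $T\cap\Gamma_{13}=\emptyset$ as ``built into the definition of $\Dom(p_i)$,'' but the $\Dom(p_i)$ are closed sets defined as closures, and boundary points can perfectly well have $p_i\in T$; the paper only proves this disjointness in $\Theta\cap\Fin$ (via \Cref{lemma:finite}), not in $\Theta\cap\Inf$, which is where you are working.
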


\begin{proof}
We argue by counting dimensions, keeping in mind that $\Gamma_{13} = (p_1, \dots, p_{13})$ is a general tuple. $\Lambda'$ must necessarily be a pencil of quartics thanks to \Cref{proposition:infline}.

Let $W$ denote the quasi-projective variety parametrizing pairs $(L,\Pi)$ where $L$ is a line in $\P^{2}$ and $\Pi$ is a pencil of quartics all sharing a singular point, and such that $\Base(\Pi)$ is finite. Such $\Pi$'s vary in a $22$-dimensional family, while a choice of $L$ provides $2$ more dimensions, and so $\dim W = 24$. 

For contradiction's sake, suppose $(T,T^{*},\Lambda) \in \Theta \cap \Inf$ is such that $\Base(\Lambda)$ has a line $L$ as fixed part and such that $\Pi := \Lambda'$, the moving part of $\Lambda$, satisfies $(L,\Pi) \in W$. By assumption, $\Gamma_{13}$ is contained in $\Base(\Lambda)$, and at most $2$ of the points of $\Gamma_{13}$ may lie on $L$ by generality of $\Gamma_{13}$.  On the other hand, at most $11$ of the points of $\Gamma_{13}$ may lie in the (finite) set $\Base(\Pi)$ because $\Pi$ varies in a $22$-dimensional family. Therefore, {\sl  exactly} $2$ points of $\Gamma_{13}$ must lie on $L$ and the remaining $11$ points of $\Gamma_{13}$ must lie inside $\Base(\Pi)$.  Furthermore, dimension considerations force $(L,\Pi)$ to be a general point of $W$. 

The contradiction will come from the fact that the base scheme $\Base (\Lambda)$ is the disjoint union of $L$ (without embedded points) and $\Base(\Pi)$. This is true because $(L,\Pi) \in W$ is general, and so  $\Base(\Pi)$ consists of one point of multiplicity $4$ along with $12$ other reduced points, none lying on $L$.  It is impossible for any scheme of the form $\s(T,T^{*})$ (which is everywhere non-reduced, has $2$-dimensional tangent space at all points, and has total length $9$) to be contained in $\Base(\Lambda) = L \cup \Base(\Pi)$, providing our contradiction.
\end{proof}

The next proposition is needed to remove an {\sl a priori} possible situation in $\Theta \cap \Inf$.

\begin{proposition}
\label{proposition:LinDom}
Suppose $(T,T^{*},\Lambda)$ is an element of  $\Dom(p) \cap \Lin(p)$ satisfying:
\begin{enumerate}
    \item The fixed part of $\Lambda$ is the line $L$ spanned by $T$, and
    \item $p \notin T$. 
\end{enumerate}
Then the moving part $\Lambda'$ of $\Lambda$ satisfies $$\length \left(\Base(\Lambda') \cap L\right) = 4.$$
\end{proposition}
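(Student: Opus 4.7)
The plan is to realize $(T,T^{*},\Lambda)$ as a limit of generic honest-triangle points of $\Dom(p)$ and then apply a limit-linear-series argument to a rank-jumping family of ambient linear systems.  Since $\Dom(p)$ is irreducible with general point lying in the honest-triangle locus, I pick a pointed curve $(C,c_0)$ with a morphism $\phi\from C \to \Dom(p)$ sending $c_0$ to $(T,T^{*},\Lambda)$ and sending $c \ne c_0$ to honest-triangle points $(T_c,T_c^{*},\Lambda_c)$.  I consider the family of vector spaces
\[
  W_c := H^{0}\!\left(\P^{2}, \I_{\s(T_c,T_c^{*}) \cup \{p\}}(5)\right).
\]
For $c \ne c_0$, the length-$10$ scheme $\s(T_c,T_c^{*}) \cup \{p\}$ imposes independent conditions on quintics, so $\dim W_c = 11$.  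At $c_0$, however, the condition $F(p)=0$ is redundant: every $F \in H^{0}(\I_{\s(T_0,T_0^{*})}(5))$ contains $L$ as a component (by a standard multiplicity count on $L$, also reflected in \Cref{lemma:generatedquintics} in the thin case) and so vanishes at $p \in L$ automatically.  Hence $\dim W_{c_0} = 12$, and the generically rank-$11$ family has a well-defined limit $W_{c_0}^{\lim} \subset H^{0}(\I_{\s(T_0,T_0^{*})}(5)) = L \cdot H^{0}(\I_{T}(4))$, a codimension-$1$ subspace; by the closedness of the incidence relation in Grassmannians, $\Lambda = \lim \Lambda_c$ lies in $\Gr(2, W_{c_0}^{\lim})$.

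The crucial claim is the inclusion $L^{2} \cdot H^{0}(\P^{2},\O(3)) \subset W_{c_0}^{\lim}$.  For each cubic $S$, I will construct a formal (and hence, by standard flatness/Artin arguments, an algebraic) section $F_s = L^{2}S + s R_{1} + s^{2} R_{2} + \cdots$ of $W_c$ with $F_{s=0} = L^{2}S$.  Expanding the node conditions $F_s(a_s)=0$ and $\nabla F_s(a_s) = 0$ at each deforming point $a_s = a + s\alpha_a + O(s^{2}) \in T_s$, together with $F_s(p)=0$, in powers of $s$, and using $L(a)=L(p)=0$ to kill the leading $L^2S$ contributions at $a$ and $p$, yields at first order the inhomogeneous system
\[
  R_1(a)=0,\qquad \nabla R_1(a) = -2 S(a)\bigl(\nabla L(a)\cdot \alpha_a\bigr)\nabla L(a),\qquad R_1(p)=0
\]
(and its analogues at $b,c$): a system of $10$ linear conditions on the $21$-dimensional space of quintics, routinely seen to be consistent.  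Higher-order conditions on $R_k$ have precisely the same shape, so the formal family exists; the Grassmannian is complete, so the limit $L^2 S$ lies in $W_{c_0}^{\lim}$.

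Once the inclusion is established, a dimension count closes the argument.  Inside the $11$-dimensional $W_{c_0}^{\lim}$, the hyperplane $L^{2}\cdot H^{0}(\O(3))$ has codimension $1$ and the pencil $\Lambda$ has dimension $2$, so $\dim\bigl(\Lambda \cap L^{2}\cdot H^{0}(\O(3))\bigr) \geq 2+10-11 = 1$.  The hypothesis that the fixed curve of $\Lambda$ is the reduced line $L$ (rather than $L^{2}$) forbids $\Lambda \subset L^{2}\cdot H^{0}(\O(3))$, so the intersection is exactly one-dimensional: $\Lambda = \langle L^{2}S,\, F_{\mathrm{other}}\rangle$ for some cubic $S$ and a quintic $F_{\mathrm{other}}$ not divisible by $L^{2}$.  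Dividing by $L$ gives $\Lambda' = \langle L\,S,\, Q_{\mathrm{other}}\rangle$ with $Q_{\mathrm{other}} = F_{\mathrm{other}}/L$ not divisible by $L$.  The ideal of $\Base(\Lambda') \cap L$ in $\O_{\P^{2}}$ is $(L\,S, Q_{\mathrm{other}}) + (L) = (L, Q_{\mathrm{other}})$, so $\Base(\Lambda') \cap L$ is the zero scheme on $L$ of the degree-$4$ form $Q_{\mathrm{other}}|_L$, yielding $\length\!\left(\Base(\Lambda') \cap L\right) = 4$.

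The main obstacle is the proof of the key inclusion $L^{2}\cdot H^{0}(\O(3)) \subset W_{c_0}^{\lim}$: one must check that the $10$ linear conditions on $R_k$ remain simultaneously solvable at every order $k$ (not only at first order), and carefully justify that the resulting formal power-series section of $W$ gives a genuine point of the Grassmannian limit $W_{c_0}^{\lim}$.  Everything else is linear algebra and Bezout on the line $L$.
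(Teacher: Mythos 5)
Your approach is substantively different from the paper's. The paper blows up $\P^2_B$ along $L$ in the central fiber, runs a limit-linear-series argument on the reducible surface $P\cup E$ with $E$ a Hirzebruch surface, explicitly identifies the extra base point $q$ in the limit, and handles a degenerate collinear sub-case (\Cref{assumption:Noline}) by further blow-ups. You instead compute the Grassmannian limit $W_{c_0}^{\lim}$ of the rank-$11$ systems $W_c$ directly, via the inclusion $L^2\cdot H^0(\O(3))\subset W_{c_0}^{\lim}$, and finish with a two-line dimension count. This avoids both the identification of $q$ and the paper's case distinction, and the endgame is noticeably cleaner; the key inclusion, as you acknowledge, is where the real work lives.

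The gap you flag there is genuine, and more than you suggest: even the first-order consistency is not ``routine,'' since $p\in\langle T\rangle$ forces the ten functionals cutting out $W_{c_0}$ to have rank only nine, so solvability of the inhomogeneous system on $R_1$ is a true constraint; and the claim that ``higher-order conditions on $R_k$ have precisely the same shape'' is false as stated, since the order-$k$ right-hand side depends on $R_1,\dots,R_{k-1}$ through nonlinear Taylor terms and need not respect the relation automatically. Happily, none of the higher-order analysis is needed. Write $\ell_1^{(s)},\dots,\ell_{10}^{(s)}$ for the ten functionals with common kernel $W_s$ and let $\Psi(s)=\ell_{10}^{(s)}-\sum_{i\le 9}c_i\ell_i^{(s)}$ be the unique relation, which vanishes at $s=0$. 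Since the saturated kernel of $(\ell^{(s)})$ is a free rank-$11$ subsheaf of $\underline{H^0(\O(5))}$ on the base curve, its fiber at $0$ is exactly $W_{c_0}^{\lim}=W_{c_0}\cap\ker\Psi_1$, where $\Psi_1=\left.\tfrac{d}{ds}\right|_{s=0}\Psi(s)$ -- i.e.\ the hyperplane cutting $W_{c_0}^{\lim}$ out of $W_{c_0}$ is the derivative of the relation. Your first-order computation is precisely the check that $L^2S\in\ker\Psi_1$: the relation involves only the values and the \emph{tangential} first derivatives at the deforming nodes, and all of these derivatives of $\ell_i^{(s)}(L^2S)$ vanish because the Hessian of $L^2S$ at a point $a\in L$ is the rank-one form $2S(a)\,\nabla L(a)\otimes\nabla L(a)$, concentrated in the conormal direction. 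So the first-order step you already did is all that is needed, and the formal power-series construction can be dropped entirely. Two small corrections: the reference to \Cref{lemma:generatedquintics} is misplaced since that lemma assumes $T\notin\Thin$ (the multiplicity count you want is B\'ezout applied to $F|_L$); and you should insist that the approaching curve meets the $\Thin$-locus transversally so that $\Psi_1|_{W_{c_0}}\ne 0$, which is harmless by irreducibility of $\Dom(p)$.
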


\begin{proof}
    The proof uses a limit linear series style argument.  First observe that since the fixed part of $\Lambda$ is the reduced line $L$, the moving part $\Lambda'$ is a pencil of quartics and the length of $\Base(\Lambda')\cap L$ cannot exceed $4$.   We will approach the point $(T,T^{*},\Lambda)$ along a general $1$-parameter family contained in $\Dom(p)$. 
    
    So, let $(B,0)$ be a smooth, irreducible pointed affine curve, and let $\pi: \P^{2}_{B} \to B$ denote the natural projection.  Suppose $t_1, t_2, t_{3}$ are sections of $\pi$ satisfying: 
    \begin{enumerate}
        \item[(a)] For $b \neq 0$, the three points $t_{i}(b)$ form an honest triangle, denoted $\mathcal{T}_{b} \subset \P^{2}_{b}$.
        \item[(b)] The $b \to 0$ flat limit of $\mathcal{T}_{b}$, denoted $\mathcal{T}_{0}$, is thin. We let $L \subset \P^{2}_{b=0}$ denote the line containing $\mathcal{T}_{0}$.  ($\mathcal{T}_{0}$ is simply $T$ from the statement of the lemma, but we use calligraphic font for consistency in the following paragraphs.)
        \item[(c)] $\mathcal{T}_{0}$ does not contain the point $p$. (This is the second assumption in the proposition.)
        \item[(d)] The line $L$ contains $p$.
        \item[(e)] For $b \neq 0$, the four points $t_{1}(b), t_{2}(b), t_{3}(b),$ and $p$ are in linear general position.
    \end{enumerate}

    If $b \neq 0$, then the vector space $$V_{b} := H^{0}\left(\P^{2}_{b}, \I^{2}_{\mathcal{T}_{b}}\I_{p}(5)\right)$$ is $11$-dimensional because $\mathcal{T}_{b}$ and $p$ are in linear general position. Yet, when $b=0$, the vector space $V_{0}$ has dimension $12$.  Indeed, a quintic curve in $\P^{2}_{0}$ passing through $p$ and singular along $\mathcal{T}_{0}$ necessarily contains the line $L$ as an irreducible component and secondly its residual quartic curve $Q$ must contain $\mathcal{T}_{0}$.  The equations of such quartic curves form a $12$-dimensional vector space. Still, as $B$ is a smooth curve, the $b \to 0$ limit of $V_{b}$ is a well-defined $11$-dimensional vector space inside $V_{0}$ -- the following claim is equivalent to the conclusion of the proposition:

    \begin{claim}
    \label{claim:quinticforms}
        There exists a point $q \in L$ such that $$\lim_{b \to 0}V_{b} = \left\{\begin{tabular}{c} Quintic forms which are a product\\ $L \cdot Q$ where $Q$ is a quartic\\ containing the divisor $\mathcal{T}_{0} + q$ on $L$\end{tabular}\right\}.$$  
    \end{claim}

    Going forward we focus on \Cref{claim:quinticforms}.  We blow up $\P^{2}_{B}$  along the line $L \subset \P^{2}_{b=0}$. Let $$\widetilde{\P^{2}_{B}}$$ denote the blow up $\Bl_{L} \P^{2}_{B},$ and denote by $$\beta: \widetilde{\P^{2}_{B}} \to \P^{2}_{B}$$ the blowdown map.  The structural morphism $\pi: \widetilde{\P^{2}_{B}} \to B$ has fiber over $b \neq 0$ simply equal to $\P^{2}_{b}$, while the fiber $\pi^{-1}(\{0\})$ is the transverse union of two smooth surfaces: the exceptional divisor of $\beta$, denoted $E$,  and the proper transform of $\P^{2}_{0} \subset \P^{2}_{B}$ in the blow up, denoted $P$. $P$ maps isomorphically onto $\P^{2}_{0}$ under $\beta$. 

    The current geometric circumstance has certain features we want to emphasize:
\begin{itemize}
\item $\beta$ expresses $E$ as a $\P^{1}$-bundle over $L$. As such, $E$ is isomorphic to the Hirzebruch surface $\mathbb{F}_{1}$, with the intersection $E \cap P$ being the directrix curve which we denote by $D \subset E$. We let $F$ denote the divisor class of a fiber of the bundle $\beta|_{E}: E \to L$.

\item Since $L \cap \mathcal{T} = \mathcal{T}_{0} = \mathcal{T} \cap \P^{2}_{0}$ is a Cartier divisor on $\mathcal{T}$, it follows that $\mathcal{T}$ lifts isomorphically to $\widetilde{\P^{2}_{B}}$.  Let $\widetilde{\mathcal{T}} \subset \widetilde{\P^{2}_{B}}$ denote this lift, which is unique. (We've used here the assumption that $\mathcal{T}_{0}$ is thin.)  

\item $E \cap \widetilde{\mathcal{T}} = \widetilde{\mathcal{T}}_{0}$. Furthermore, $\widetilde{\mathcal{T}} \cap P = \varnothing$ because the sections $t_{i}$ comprising $\mathcal{T}$, by virtue of being sections, are not tangent to $\P^{2}_{0}$ from the start. 

\item Under $\beta$, the proper transform of the constant section $\{p\} \times B \subset \P^{2}_{B}$ becomes a section of $\pi$ which intersects $E$ at a point $\widetilde{p} \in E$ not on the directrix $D \subset E$.  

\item Clearly $\beta(\widetilde{p}) \notin \mathcal{T}_{0}$ because $p \notin \mathcal{T}_{0}$. And therefore $\widetilde{p} \notin \widetilde{\mathcal{T}}_{0}$.

\item Define $\mathcal{L}$ to be the invertible sheaf $\beta^{*}\O(5)(-E)$ on $\widetilde{\P_{B}^{2}}$. Then $\mathcal{L}|_{P} \simeq \O_{P}(4)$ and $\mathcal{L}|_{E} \simeq \O_{E}(D+5F)$.
\end{itemize}

Now we consider the finite, length 4 subscheme $Z := \widetilde{p} \cup \widetilde{\mathcal{T}}_{0} \subset E$.  On $E$, the term {\sl line} refers to any \emph{irreducible} curve in the linear series $|D+F|$ on $E$.  In particular, lines do not intersect the directrix $D$.

\begin{lemma}
    \label{lemma:D2F} Maintain the setting immediately prior. Then $$h^{0}(E,\I_{Z}(D+2F)) \geq 1$$ and $h^{0}(E,\I_{Z}(D+2F)) > 1$ if and only if $Z$ is contained in a line, in which case $h^{0}(E,\I_{Z}(D+2F)) = 2$.
\end{lemma}

\begin{proof}[Proof of \Cref{lemma:D2F}]
    This follows easily after translating the statement into a claim about length 5 subschemes in the plane imposing independent conditions on conics.  To get to this translation, simply blow down the Hirzebruch surface $E$ to a plane, contracting $D$.  We omit the details.
\end{proof}

At this point, for clarity of exposition, we make the following simplifying assumption:

\begin{assumption}
\label{assumption:Noline}
    Assume that $Z \subset E$ is {\bf not} contained in a line.
\end{assumption}
Operating under this assumption, we can finish the argument. On the reducible surface $P \cup E$, the global sections of 
$$\I_{\widetilde{\mathcal{T}}_{0}}^{2}\I_{\widetilde{p}}\otimes \mathcal{L}|_{P \cup E}$$ consist of the data of
\begin{itemize}
    \item A global section $Q$ of $\O_{P}(4)$, and
    \item a global section of $\O_{E}(D+5F)$ which, if nonzero, defines a reducible divisor of the form \[\beta^{-1}(\mathcal{T}_{0}) \cup C\] where $C$ is the unique curve in the linear series $|\I_{Z}(D+2F)|$ (\Cref{lemma:D2F})
\end{itemize}
satisfying the compatibility condition that both sections agree on the curve $D = E \cap P$.  Now we observe that under our simplifying \Cref{assumption:Noline}, the curve $C$ is irreducible and meets $D$ at a single point $q$.  Therefore, on $D$ we obtain a particular degree $4$ divisor, namely $\mathcal{T}_{0} + q$, which the quartic $Q$ is forced to contain in its zero scheme if $Q$ is to contribute to a global section of  $$\I_{\widetilde{\mathcal{T}}_{0}}^{2}\I_{\widetilde{p}}\otimes \mathcal{L}|_{P \cup E}.$$ Unwinding what this means before blowing up, we arrive at the conclusion of the theorem.

Finally, we will explain how to finish the proof of \Cref{proposition:LinDom} in the case where \Cref{assumption:Noline} does not hold. Suppose $Z \subset E$ is contained in a line, and call the line $L_1 \subset E$.  We then blow up the threefold $\widetilde{\P}^{2}_{B}$ along $L_1$.  Once again, $\widetilde{\mathcal{T}}$ will lift to the new blow up, as will the section which for general $b \in B$ selects the point $p$. Blowing up more and more in a similar fashion if necessary, after finitely many blow ups the strict transform of $\mathcal{T}$ and $\{p\} \times B$ are no longer collinear over $b=0$.  Then, we proceed as we did under the simplifying assumption, except now the special fiber is a chain of surfaces $E_0 \cup E_{1} \cup \dots \cup P$, the $E_{i}$'s being $\mathbb{F}_{1}$'s, glued one to the next along directrices on one side and lines on the other.  Then the line bundle $\beta^{*}\O(5)(-E_{0}-E_{1}-\dots E_{k})$ serves the same role as $\mathcal{L}$ did in the simplified situation, and the argument runs parallel to the simple case. 
\end{proof}

\begin{remark}
\label{remark:splitline}
    The conclusion of \Cref{proposition:LinDom} implies in particular that the pencil $\Lambda$ contains an element which is the union of the doubled line $2L$ with a cubic curve.
\end{remark}

\begin{proposition}
    \label{lemma:ThetaInf} Recall the set $\Theta$ from \Cref{equation:Theta}. Then $\Theta \cap \Inf = \varnothing.$
\end{proposition}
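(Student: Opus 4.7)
The plan is to assume $(T,T^{*},\Lambda) \in \Theta \cap \Inf$ exists and derive a contradiction by combining the structural results already proved.  By \Cref{proposition:infline} the fixed part of $\Lambda$ is a reduced line $L$, and by \Cref{proposition:Movesingular} the moving part $\Lambda'$ is a pencil of quartics whose general member is a smooth curve; write $\Lambda = L \cdot \Lambda'$.

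The first step is to show $T \in \Thin$ with $\langle T \rangle = L$.  The scheme $\s(T,T^{*})$ has $2$-dimensional Zariski tangent space at every point of its support, and $\Base(\Lambda')$ is finite since $\Lambda'$ has no fixed curve.  Hence any point of $\Supp T$ lying off $L$ would be an isolated base point of $\Lambda'$ at which every element of $\Lambda'$ is forced to be singular, contradicting smoothness of the general member.  This rules out honest triangles and also the fat orbit with support off $L$.  For the fat orbit supported at a point $q \in L$, a direct local computation using $\s(T,T^{*}) = \m_{q}^{4} + (L_{1}L_{2}L_{3})$ shows that every element of $\Lambda'$ must still vanish to order $\geq 2$ at $q$, giving the same contradiction.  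So $T$ lies in one of the thin orbits, and an orbit-by-orbit check confirms $\langle T \rangle = L$.

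The second step eliminates the case where some $p_{i} \in L \setminus T$.  Then $(T,T^{*},\Lambda)$ lies in $\Dom(p_{i}) \cap \Lin(p_{i})$ and satisfies both hypotheses of \Cref{proposition:LinDom}, so by \Cref{remark:splitline} the pencil $\Lambda$ contains an element of the form $2L \cdot C$ for some cubic $C$.  Every $p_{j} \in \Gamma_{13}$ off $L$ must then lie on $C$, but at most two of the $13$ general points can lie on $L$, so $C$ would have to pass through at least $11$ general points of $\P^{2}$; this is impossible because $h^{0}(\P^{2}, \I(3)) = 10$.

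In the remaining case $\Gamma_{13} \cap L \subset T$, each $p_{i} \in \Gamma_{13}$ lies either in $T \subset \Base(\Lambda')$ or off $L$; in the latter case the decomposition $\Lambda = L \cdot \Lambda'$ combined with $\Gamma_{13} \subset \Base(\Lambda)$ forces $p_{i} \in \Base(\Lambda')$.  Hence $\Lambda' \subseteq H^{0}(\P^{2}, \I_{\Gamma_{13}}(4))$, which for generic $\Gamma_{13}$ is the unique quartic pencil $\Lambda_{0}$ through the $13$ points; by dimension $\Lambda' = \Lambda_{0}$, and consequently $T \subset \Base(\Lambda_{0})$.  But for generic $\Gamma_{13}$, $\Base(\Lambda_{0})$ consists of $16$ reduced points in general position — no three are collinear and none carries a non-reduced structure — so $\Base(\Lambda_{0})$ contains no thin length-$3$ subscheme, contradicting $T \subset \Base(\Lambda_{0})$.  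The main technical obstacle is the local singularity analysis in the first step, especially for the fat orbit, together with verifying the general-position property of $\Base(\Lambda_{0})$ across generic $\Gamma_{13}$.
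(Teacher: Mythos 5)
Your proposal follows essentially the same architecture as the paper's proof: use \Cref{proposition:infline} and \Cref{proposition:Movesingular} to write $\Lambda = L \cdot \Lambda'$ with $\Lambda'$ a quartic pencil having smooth general member, show $T \subset L$, split into cases, and close the two cases via \Cref{proposition:LinDom}/\Cref{remark:splitline} on one side and the genericity of the quartic pencil through $\Gamma_{13}$ on the other. Your Step~2 and Step~3 are correct, and in fact your Step~3 unpacks the paper's one-line assertion (``a general pencil of quartics has no thin length-$3$ subscheme in its base scheme'') more carefully by describing $\Base(\Lambda_{0})$ as $16$ reduced points with no three collinear. That verification is a legitimate but minor dimension count, not the heart of the matter.

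The one genuine flaw is the inference in your first step. After ruling out support off $L$ and the fat orbit, you write ``So $T$ lies in one of the thin orbits.'' This does not follow: the orbits (C) (length-$2$ plus a reduced point, non-collinear) and (E) (curvilinear length $3$ not contained in a line) can have $\Supp T \subset L$ while $T$ is neither thin nor fat. These are precisely the remaining cases, and orbit~(E) with support on $L$ is in fact what the paper singles out as ``the most challenging case,'' handled there by the explicit computation with $T = V(x - y^{2}, y^{3})$ and $L = V(x)$; you instead chose the \emph{easier} fat case to present. The local computations you would need for (C) and (E) are entirely analogous to the fat-orbit computation you did (one checks that $x \cdot q(x,y) \in \I_{\s(T,T^{*})}$ forces $q$ to have vanishing constant and linear parts, hence a fixed singular point of $\Lambda'$), so the substance of the argument survives — but as written, the step from ``fat ruled out'' to ``$T \in \Thin$'' skips exactly the cases that require real work, and the sentence should be replaced by an honest case-by-case statement covering (C) and (E) on $L$, and also orbit~(F) with $\langle T \rangle \neq L$, which your ``orbit-by-orbit check'' for $\langle T \rangle = L$ similarly leaves implicit.
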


\begin{proof}
    Suppose for contradiction's sake that $(T,T^{*},\Lambda) \in \Theta \cap \Inf$. By \Cref{proposition:Movesingular}, $\Lambda$ is a pencil of the form $\left\{L \cdot Q_{t}\right\}, t\in \P^{1}$, where $L$ is a linear form (defining a line of the same name) and where $Q_{t}$ is a pencil of quartics \emph{with smooth general member}.  Considering tangent space dimensions, the condition $\s(T,T^{*}) \subset \Base (\Lambda)$ implies that $T$ is supported entirely on the set of embedded points $L \cap \Base \{Q_{t}\}$. 

    The first claim we make is that $T$ must in fact be a subscheme of $L$.  This is seen by checking all alternative possibilities -- we will investigate the most challenging case, and leave the rest to the reader.  Let $x,y$ denote affine coordinates, and consider the situation where  $T = V(x-y^{2},y^{3})$ and $L = V(x)$. We claim that if $x \cdot q(x,y) \in (x-y^{2},y^{3})^{2}$ then the constant and linear terms of $q(x,y)$ must both vanish.  It is clear that $q$ cannot have a constant term. To prove that $q$  has no linear term, it suffices to prove the same after applying the automorphism $x \mapsto x+y^{2},  y \mapsto y$.  Thus, we must show that if $$(x+y^{2}) \cdot h(x,y) \in (x^{2},xy^{3},y^{6})$$ then $h$ has no linear term. Let $h_{1},h_{2}, \dots$ be the linear, quadratic, etc... terms of $h$, so that $h = h_1 + h_2 + \dots$  First, by noting that the $y^{3}$-term of $(x+y^{2}) \cdot h$ must vanish, we conclude that $h_{1}$ must be a multiple of $x$. Write   $h_{1}=\lambda x$ for some $\lambda \in \k$.  Next, looking at the $xy^{2}$-term  we see that the $y^{2}$-term of $h_{2}$ must be $-\lambda y^{2}$.  Finally, by considering the $y^{4}$-term we also conclude that the $y^{2}$-term of $h_{2}$ must be $0$.  Thus $\lambda = 0$ and, as claimed, $h_{1}=0$.  The conclusion is that such a $T$ is eliminated from consideration from the case-by-case analysis because it would contradict the requirement that the general element of the pencil $Q_{t}$ is a \emph{smooth} quartic.  The remaining cases proceed in a similar fashion.

    We therefore assume $T \subset L$.  From the containment $\s(T,T^{*}) \subset \Base \{L \cdot Q_{t}\}$, it follows that $$T \subset \Base \{Q_{t}\}.$$ Now we bring in the assumption that the 13 \emph{general} points $p_{1} , \dots, p_{13}$ are contained in $\Base \Lambda$.  Not all $p_{i}$ can be contained in $\Base \{Q_{t}\}$.  Indeed, $Q_t$ would then be uniquely determined and a general pencil.  But it is not general: $\{Q_{t}\}$ contains a thin length $3$ scheme (namely $T$) in its base scheme.  It follows that at least one point, say $p_{13}$ is contained in $L \setminus \Base \{Q_{t}\}.$  Therefore, $(T,T^{*},\Lambda) \in \Dom(p_{13}) \cap \Lin(p_{13})$, and \Cref{proposition:LinDom} applies.  \Cref{remark:splitline} then says there is an element of the pencil $\{Q_{t}\}$ of the form $L \cdot C$ where $C$ is a cubic form.  However, then at least 11 of the remaining 12 points $p_{1}, \dots, p_{12}$ are contained in the cubic defined by $C$, contradicting generality of $\{p_{1}, \dots, p_{13}\}$.  The proposition follows. 
\end{proof}

\begin{theorem}
  \label{theorem:solutionset} Let $p_1, \dots, p_{13}$
  be 13 general points in $\P^{2}$, and let $\Theta$ be as in \Cref{equation:Theta}. Then   $$\Theta = \left\{ \begin{tabular}{c}
         $(T,T^{*}) \in\CT$ such that\\  $T$ is a singular triad  \\
         for $p_{1}, \dots, p_{13}$
    \end{tabular} \right\}.$$
\end{theorem}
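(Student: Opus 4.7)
The plan is to prove the set equality in both containments, interpreting the statement via the projection $\varphi\from\SQP\to\CT$; most of the inputs are already in hand from the lemmas of this section, and the one step requiring genuinely new work is flagged at the end.

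For the ``$\supseteq$'' inclusion, given a singular triad $T$ for $\Gamma_{13}$, condition $(1)$ of $\mathcal{Y}$ makes $T$ honest, so \Cref{proposition:fatandtriple}$(4)$ uniquely determines $T^*$. Condition $(4)$ supplies two reduced irreducible quintics singular at $T$ and passing through $\Gamma_{13}$; their span $\Lambda$ lies in $H^0(\P^2,\I_T^2(5))$, placing $(T,T^*,\Lambda)$ in $\SQP$ and in $\BP(p_i)$ for each $i$. Since $T$ is honest, $(T,T^*,\Lambda)\notin\Lin(p_i)$, and condition $(2)$ of $\mathcal{Y}$ gives $p_i\notin T$, so $(T,T^*,\Lambda)\notin\Inc(p_i)$; \Cref{lemma:threecomponents} then forces $(T,T^*,\Lambda)\in\Dom(p_i)$ for every $i$, i.e.\ $(T,T^*,\Lambda)\in\Theta$.

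For the ``$\subseteq$'' inclusion, given $(T,T^*,\Lambda)\in\Theta$, I will verify the four defining properties of $\mathcal{Y}$ for $T$. \Cref{lemma:ThetaInf} places $(T,T^*,\Lambda)$ in $\Fin$, and then \Cref{lemma:finitehonest} yields conditions $(1)$ and $(3)$: $T$ is honest and disjoint from $\Gamma_{13}$. Condition $(4)$ follows because $\Lambda\in\Fin$ has no fixed component in $\Base(\Lambda)$, so the reducible and non-reduced loci in $\Lambda\cong\P^1$ are proper closed subsets; a general member of $\Lambda$ is therefore reduced and irreducible, and any two such members give the required pair.

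The main obstacle is condition $(2)$: no $p_i$ lies on a line of the triangle spanned by $T$. I plan to handle this with a dimension count modeled on \Cref{lemma:finite} and \Cref{lemma:finitehonest}. Let $V\subset\Fin$ be the locus of triples $(T,T^*,\Lambda)$ with $T$ honest and $\Base(\Lambda)$ containing a non-vertex point on some edge of the triangle spanned by $T$. For fixed honest $T$ and a line $\ell$ of its triangle with vertices $t_a,t_b\in\ell$, every quintic singular at $T$ satisfies $Q\cdot\ell\ge 4$ at $\{t_a,t_b\}$, so $Q\cap\ell$ has exactly one residual point. Requiring the residual points of two generators of a pencil $\Lambda\in\Gr(2,12)$ to coincide is a single linear condition, so the fiber of $V$ over $\CT$ has codimension one, giving $\dim V\le 25$. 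Defining
\[ V':=\{(T,T^*,\Lambda,(p_1,\dots,p_{13}))\in V\times(\P^2)^{13}:p_i\in\Base(\Lambda)\text{ for all }i\}, \]
the projection $V'\to V$ is quasi-finite (elements of $\Fin$ have finite base scheme), so $\dim V'\le 25<26=\dim(\P^2)^{13}$ and $V'\to(\P^2)^{13}$ is not dominant. Generality of $\Gamma_{13}$ then excludes $\Theta$ from $V$, yielding condition $(2)$. The hard part will be making this codimension count rigorous — confirming in particular that requiring two pencil generators to share an extra base point on a prescribed line of the triangle genuinely drops the Grassmannian fiber dimension by exactly one; every other step is a direct appeal to results already proved in this section.
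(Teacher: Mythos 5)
Your overall plan matches the paper's: the $\supseteq$ inclusion is direct, and for $\subseteq$ you invoke \Cref{lemma:ThetaInf} to land in $\Fin$ and then \Cref{lemma:finitehonest} to get honesty and disjointness from $\Gamma_{13}$; the paper dispatches what remains with the single phrase ``and B\'ezout's theorem,'' whereas you unpack it. Your $\supseteq$ argument is correct, including the use of \Cref{lemma:threecomponents} to rule out $\Inc(p_i)$ and $\Lin(p_i)$ (and $\Lin(p_i)$ is indeed excluded since $\Thin$ is closed). Your Schubert-style count for condition $(2)$ is also correct: the condition that $\Lambda$ has a base point on the edge $\ell$ through $t_a,t_b$ is exactly that $\Lambda$ meets the codimension-$2$ subspace $W = \ell\cdot H^0(\I_{t_a}\I_{t_b}\I_{t_c}^2(4))\subset H^0(\I_T^2(5))$ nontrivially, which is a codimension-one Schubert cycle in $\Gr(2,12)$; so the concern you flag is not actually a problem, and the rest of the argument is the quasi-finiteness/dimension trick from \Cref{lemma:finite}.

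There is one genuine gap, in your treatment of condition $(4)$: the inference ``$\Lambda\in\Fin$ has no fixed component, hence the reducible locus in $\Lambda\cong\P^1$ is proper'' is false in general. A pencil such as $\{aF^2+bG^2\}$ has finite base scheme, yet every member is reducible. What you actually need is that $\Lambda$ is not composite with a pencil of lower degree. This is where B\'ezout and the $13$ general points earn their keep: if the general member of the fixed-component-free pencil $\Lambda$ were reducible, Stein factorization of the induced map $\P^2\dashrightarrow\P^1$ would exhibit $\Lambda$ as composite with a pencil of degree $5/m$ for some $m>1$; since $5$ is prime this forces $m=5$ and a primitive pencil of \emph{lines}, whose base scheme is a single point, contradicting $\Gamma_{13}\subset\Base(\Lambda)$. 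With that supplied, a general member is irreducible, and the non-reduced members form a proper closed subset for the same reason, so two general members serve for condition $(4)$. Your proof is otherwise sound, and in fact makes explicit a point (condition $(2)$) that the paper's terse citation of B\'ezout arguably leaves implicit.
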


\begin{proof}
    It is clear that the mentioned set of singular triads is contained in $\Theta$ -- the content of the theorem lies in the  reverse inclusion.  \Cref{lemma:ThetaInf} gives the inclusion $\Theta \subset \Fin$. Then the theorem immediately follows from \Cref{lemma:finitehonest} and B\'ezout's theorem.
\end{proof}

\section{Intersections in SQP}
\label{sec:finishing}

Let $p \in \P^{2}$ be a point and let $\ell \subset \P^{2}$ be a line. In this section we let $$\inc(p), \lin(p)$$ be the cycles on $\CT$ consisting of points $(T,T^{*})$ in $\CT$ where $T$ contains the point $p$ or where $T$ is collinear with $p$, respectively.  Observe that both cycles are pulled back from $\Hilb_{3}\P^{2}$ under the blowdown map $\CT \to \Hilb_{3}\P^{2}$. For brevity, we will write $\O(d)^{[3]}$ for the same-named tautological bundle on $\Hilb_{3}\P^{2}$, but pulled back to $\CT$.  Finally we let $H \subset \CT$ denote the divisor class of the locus of complete triangles $(T,T^{*})$ such that $T$ intersects the line $\ell$ non-trivially - $H$ is also evidently pulled back from $\Hilb^{3}\P^{2}$.

\begin{definition}
    Define $c_{i}(j) \in \CH^{i}\CT$ to be the Chern class $c_{i}\left(\O(j)^{[3]}\right)$. Define $e_k \in \CH^{k}\CT$ to be the Chern class $c_{k}(E)$, where $E$ is the rank $9$ vector bundle from \Cref{sec:Chern}.  Set 
    \begin{align}
        \Delta_{0}:= e_{3}^2-e_{2}e_{4},\\
        \Delta_{2}:= e_{2}^{2}-e_{1}e_{3}\\
        \Delta_{4}:= e_{1}^{2}-e_{2},\\
        \Delta_{6} := [\CT].
    \end{align}
    \end{definition}

    \begin{lemma}
        \label{lemma:Deltas}
        For each $i=0,1,2,3$ we have \[ \Delta_{2i} = \varphi_{*}\left( [\BP]^{13-i}\right).\]
    \end{lemma}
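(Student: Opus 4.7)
The plan is to identify $[\BP]$ as the top Chern class of a rank-$2$ bundle on $\SQP$ and then invoke the cohomological content of Porteous' formula (the Kempf--Laksov pushforward identity) to compute $\varphi_{*}[\BP]^{13-i}$.

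First I would show that $[\BP(p)] = c_{2}(\mathcal{S}^{\vee})$ in $\CH^{2}(\SQP)$, where $\mathcal{S}$ denotes the tautological rank-$2$ subbundle on $\SQP = \Gr(2,\mathcal{V}_{5})$. The variety $\BP(p)$ is the zero locus of the section of $\mathcal{S}^{\vee}$ obtained by composing the inclusion $\mathcal{S} \hookrightarrow \varphi^{*}\mathcal{V}_{5} \hookrightarrow \underline{H^{0}(\P^{2}, \O(5))}$ with evaluation at $p$. By \Cref{proposition:BPp} this zero locus is a codimension-$2$ local complete intersection, so its class equals the top Chern class $c_{2}(\mathcal{S}^{\vee})$, independent of $p$. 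Hence $[\BP]^{13-i} = c_{2}(\mathcal{S}^{\vee})^{13-i}$.

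Next I would package the $13-i$ point-conditions into a single bundle map $\psi\colon \mathcal{V}_{5} \to \underline{\k^{13-i}}$ on $\CT$, given by evaluation at $p_{1},\dots,p_{13-i}$. The induced composite $\mathcal{S} \to \varphi^{*}\mathcal{V}_{5} \to \varphi^{*}\underline{\k^{13-i}}$ on $\SQP$ is a section of $\mathcal{S}^{\vee} \otimes \varphi^{*}\underline{\k^{13-i}} \simeq (\mathcal{S}^{\vee})^{\oplus(13-i)}$, and $\SQP = \Gr(2,\mathcal{V}_{5})$ is precisely the Kempf--Laksov space attached to the locus in $\CT$ where $\psi$ has kernel of dimension at least $2$ (equivalently, rank at most $10$). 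I would then apply the Kempf--Laksov pushforward formula (the cohomological content of Porteous, cf.\ Fulton, \emph{Intersection Theory}, Theorem~14.4): for $\mathcal{A} \to \mathcal{B}$ of ranks $e$ and $f$, the pushforward $\pi_{*}\,c_{(e-r)f}(\mathcal{S}^{\vee} \otimes \pi^{*}\mathcal{B})$ from $\Gr(e-r,\mathcal{A})$ equals the Porteous class $\Delta_{((f-r)^{e-r})}(c(\mathcal{B}-\mathcal{A}))$. With $\mathcal{A} = \mathcal{V}_{5}$, $\mathcal{B} = \underline{\k^{13-i}}$, $e = 12$, $f = 13-i$, $r = 10$ (so $e-r = 2$ and $f-r = 3-i$), and using $c(\mathcal{B}-\mathcal{A}) = c(\mathcal{V}_{5})^{-1} = c(E)$ coming from the defining sequence of $E$ on $\CT$, the formula gives $\varphi_{*}([\BP]^{13-i}) = \Delta_{(3-i,\,3-i)}(c(E))$. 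Expanding the $2\times 2$ Jacobi--Trudi determinant recovers $\Delta_{2i}$ as defined in the statement.

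The main delicate point is that one wishes to apply Kempf--Laksov as the universal Chern-class identity $\pi_{*}(c_{2}(\mathcal{S}^{\vee})^{k}) = \Delta_{(k-10,\,k-10)}(c(E))$ on the Grassmannian bundle $\varphi\colon \Gr(2,\mathcal{V}_{5}) \to \CT$, rather than as a geometric statement about the intersection $\bigcap_{j}\BP(p_{j})$; this bypasses any need to check properness or transversality of that intersection. The identity is purely formal and, if desired, can be verified independently through the splitting principle and a direct Chern-root calculation on the Grassmannian bundle.
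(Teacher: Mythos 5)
Your proof is correct and is at its core the same Porteous/Kempf--Laksov computation the paper gives: both push a class forward along $\varphi \from \SQP = \Gr(2,\mathcal{V}_{5}) \to \CT$ and read off a $2\times 2$ Schur determinant in $c(E)$. Where you differ is in packaging: the paper represents $[\BP]^{13-i}$ geometrically by the cycle $\{(T,T^{*},\Lambda) : \Lambda \subset V\}$ and then pushes it down as ``the degeneracy scheme'' of $ev\from \underline{V} \to E$, which tacitly leans on a transversality/properness statement for $\bigcap_{j}\BP(p_{j})$; you instead identify $[\BP(p)] = c_{2}(\mathcal{S}^{\vee})$ once and for all and invoke Kempf--Laksov as a universal Chern-class identity on the Grassmannian bundle, which makes the argument purely formal and sidesteps any genericity hypotheses. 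The two bundle maps considered ($ev\from \underline{V}\to E$ of ranks $(8+i,9)$ in the paper, $\psi\from \mathcal{V}_{5}\to \underline{\k^{13-i}}$ of ranks $(12,13-i)$ in yours) cut out the same degeneracy locus and, using $c(\mathcal{V}_5)^{-1} = c(E)$, the Porteous determinant is $\Delta^{(2)}_{3-i}(c(E))$ either way, matching $\Delta_{2i}$ for $i=0,1,2,3$ as required.
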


    \begin{proof}
        Let $i=0,1,2,$ or $3$. Fix $13-i$ general points on $\P^{2}$ and let $V$ denote the vector space of quintic forms vanishing at those $13-i$ points. Then the class $[\BP]^{13-i} \in \CH^{26-2i}\SQP$ can be represented by the cycle which parametrizes triples $(T,T^{*},\Lambda)$ where the  $\Lambda \subset V$. 

        Pushing this cycle down to $\CT$ via $\varphi$, we get the degeneracy scheme of the natural evaluation map of vector bundles \[ev: \underline{V} \to E\] consisting of points in $\CT$ where $ev$ has at least a $2$-dimensional kernel. The lemma follows from applying Porteous's formula to $ev$.
    \end{proof}

Recall the action of $\Gm$ on $\CT$ from \Cref{sec:Chern}. In order to use localization, we need to express the classes $[\lin(p)]$ and $[\inc(p)]$ as combinations of Chern classes of $\Gm$-equivariant bundles:

\begin{lemma}
    \label{lemma:IncLinclasses} In the Chow ring $\CH^{\bullet}\CT$, we have \begin{align}
        [\lin(p)] &= c_{2}(1),\\
        [\inc(p)] &= H^{2} - c_{2}(1) + 2c_{2}(2)-c_{2}(3)
        \end{align}
    \end{lemma}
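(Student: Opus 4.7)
My plan is to prove both identities by realizing the cycles as Chern-class expressions coming from tautological constructions on $\Hilb_3\P^2$ (pulled back to $\CT$).

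For $[\lin(p)]$, I would apply Porteous's formula to the restriction-of-linear-forms map
$$\phi \from \underline{H^0(\P^2,\I_p(1))} \to \O(1)^{[3]}$$
of bundles of ranks $2$ and $3$. At a fiber $[T]$, the rank of $\phi$ drops to $\le 1$ precisely when there is a linear form vanishing on $T \cup \{p\}$, i.e., when $T$ and $p$ are collinear. Thus $\lin(p)$ is the degeneracy locus $\{\mathrm{rk}\,\phi \le 1\}$, which has the expected codimension $2$. Porteous then gives $[\lin(p)] = c_2(\O(1)^{[3]}) = c_2(1)$, as the trivial source contributes nothing.

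For $[\inc(p)]$, the strategy is to compute the K-theory class
$$(\O_p)^{[3]} := R\pi_*\bigl(\O_\mathcal{Z} \otimes^L p_2^* \O_p\bigr) \in K^0(\Hilb_3\P^2)$$
in two ways, where $\mathcal{Z} \subset \Hilb_3\P^2 \times \P^2$ is the universal length-$3$ subscheme and $\pi, p_2$ are the two projections. On one side, Grothendieck--Riemann--Roch for $\pi$, combined with $\mathrm{ch}(p_2^* \O_p) = p_2^* H^2_{\P^2}$ and the vanishing $H^3_{\P^2} = 0$, collapses $\mathrm{ch}_2((\O_p)^{[3]})$ to the ``$h^0$-part'' of $\mathrm{ch}_2(\O_\mathcal{Z}) \in \CH^2(\Hilb_3\P^2 \times \P^2)$; a direct pushforward argument identifies this as $[\inc(p)]$. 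On the other side, the Koszul resolution of $p$ twisted by $\O(3)$,
$$0 \to \O(1) \to \O(2)^{\oplus 2} \to \O(3) \to \O_p \to 0,$$
gives $[\O_p] = [\O(1)] - 2[\O(2)] + [\O(3)]$ in $K^0(\P^2)$. Since $\pi|_\mathcal{Z}$ is finite flat with $R\pi_*\O_\mathcal{Z}(d) = \O(d)^{[3]}$, this lifts to
$$\bigl[(\O_p)^{[3]}\bigr] = [\O(1)^{[3]}] - 2[\O(2)^{[3]}] + [\O(3)^{[3]}]$$
in $K^0(\Hilb_3\P^2)$.

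Equating the two expressions for $\mathrm{ch}_2$ and using $\mathrm{ch}_2(F) = c_1(F)^2/2 - c_2(F)$ reduces the identity to a small universal computation. The key input is the affine-linearity
$$c_1(\O(d)^{[3]}) = d H + c_1(\O^{[3]}),$$
which I would justify by a one-line GRR argument: the $p_2^* H_{\P^2}$-coefficient of $[\mathcal{Z}] \in \CH^2(\Hilb_3\P^2 \times \P^2)$ equals $H$, as seen by intersecting $\mathcal{Z}$ with $p_2^{-1}(\ell)$ for a line $\ell$ and pushing forward to the divisor of triples meeting $\ell$. The alternating coefficients $(1, -2, 1)$ in degrees $(1, 2, 3)$ satisfy $\sum c_d = 0$ and $\sum c_d\, d = 0$, so all $c_1(\O^{[3]})^2$ and $H \cdot c_1(\O^{[3]})$ contributions cancel; the surviving $H^2$-contribution is $\tfrac{1}{2}\bigl(\sum c_d d^2\bigr) H^2 = H^2$, and the $c_2$-terms assemble to $-c_2(1) + 2c_2(2) - c_2(3)$.

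The main delicacy I anticipate is the K-theoretic Koszul identity for $[(\O_p)^{[3]}]$: tensoring by $\O_\mathcal{Z}$ and applying $\pi_*$ must be done in the derived category, since the underived Koszul complex need not be a resolution after restricting to $\mathcal{Z}$. This is benign because $\pi|_\mathcal{Z}$ is finite flat (so $R^i \pi_* = 0$ for $i > 0$ on coherent sheaves on $\mathcal{Z}$), but formulating it correctly and checking that the leading Chern-character term is precisely $[\inc(p)]$ (with coefficient $1$ and no correction from higher-codimension loci where $T$ has embedded structure at $p$) is the step requiring the most care.
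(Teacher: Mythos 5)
Both approaches agree for $[\lin(p)]$: realizing it as the degeneracy locus of the rank-two map $\underline{H^0(\I_p(1))} \to \O(1)^{[3]}$ and applying Porteous (with the trivial source dropping out) is exactly what the paper's ``definition of the second Chern class'' remark amounts to.

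For $[\inc(p)]$ your route is genuinely different. The paper invokes the explicit formulas of Elencwajg--Le~Barz for $c_2\bigl(\O(d)^{[3]}\bigr)$ (their $\mathcal{P}_d$) in terms of standard generators of $\CH^2(\Hilb_3\P^2)$ and reads off the identity from the $d=2,3$ cases; you instead derive it from scratch via K-theory. Your calculation is correct: the Koszul resolution gives $[\O_p]=[\O(1)]-2[\O(2)]+[\O(3)]$, additivity of $F\mapsto R\pi_*\bigl(\O_{\mathcal Z}\otimes^L p_2^*F\bigr)$ transports this to $K^0(\Hilb_3\P^2)$, and the observation $c_1\bigl(\O(d)^{[3]}\bigr)=dH+c_1(\O^{[3]})$ (itself a one-line GRR computation using $\pi_*\bigl([\mathcal Z]\cdot p_2^*H_{\P^2}\bigr)=H$) makes the alternating sum of $\tfrac{1}{2}c_1^2$-terms collapse to $H^2$, leaving the asserted $c_2$-combination. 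The delicacy you flag is real but resolves cleanly: $\mathcal Z$ is Cohen--Macaulay of pure codimension $2$ in $\Hilb_3\P^2\times\P^2$ (flat over the smooth base with CM fibers), and $\Hilb_3\P^2\times\{p\}$ is a regularly embedded codimension-$2$ subvariety meeting $\mathcal Z$ properly; hence the higher Tor's vanish and $\bigl(\O_p\bigr)^{[3]}$ is the structure sheaf of the generically reduced codimension-$2$ scheme $\inc(p)$, so $\mathrm{ch}_2$ gives its fundamental class with coefficient $1$. Your argument has the virtue of being self-contained, whereas the paper's is shorter but relies on the literature for the explicit $c_2(d)$ formulas.
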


\begin{proof}
    The first equation follows from the definition of the the second Chern class -- the subschemes lying in some member of the pencil of lines through $p$ are precisely those comprising the cycle $\lin(p)$.  The second equation is a consequence of general formulas for $c_{2}(d)$ for all $d$ found on page 93 of \cite{elencwajg2006explicit}.  In \emph{loc. cit.} the authors refer to $c_{2}(d)$ by the symbol $\mathcal{P}_{d}$.  We get the expression for $\inc(p)$ by combining the $d=2$ and $d=3$ cases of the formulas in \cite{elencwajg2006explicit}. (Observe that, although \cite{elencwajg2006explicit} concerns cycles on the Hilbert scheme $\Hilb_{3}\P^{2}$, we may pull them back to $\CT$ via the forgetful map $\CT \to \Hilb_{3}\P^{2}$.)
\end{proof}


\begin{proof}[Proof of \Cref{theorem:main}]
    By \Cref{theorem:solutionset} our task is to compute \[\int_{\SQP} [\Dom(p)]^{13}.\] We will do so by instead computing \[\int_{\CT}\varphi_{*}\left([\Dom(p)]^{13}\right),\] which by \Cref{theorem:BPdecomposition} equals \[\int_{\CT}\varphi_{*}\left(([\BP(p)] - 4[\Inc(p)] - [\Lin(p)])^{13}\right).\]  Using the push-pull formula for $\varphi$ and \Cref{lemma:Deltas}, we must then compute:  
\begin{align}
\label{eq:ultimateintegral}
\int_{\CT} \sum_{i=0}^{3} (-1)^{i}{13 \choose i} \cdot \Delta_{2i} \cdot \left( 4 [\inc(p)]+[\lin(p)]\right)^{i}.
\end{align}

By expressing all terms of \eqref{eq:ultimateintegral} in terms of the Chern classes $e_{k}$ of $E$ and using \Cref{lemma:IncLinclasses}, we then use the fixed-point analysis for $E$ in \S \ref{sec:Chern} and the calculations in \Cref{proposition:weightstautological} and evaluate the Atiyah-Bott localization expression.  We perform this computation in \S \ref{sec:sagecode}.

\end{proof}

\section{Lingering questions}
\label{sec:questions}

Countless questions remain unanswered -- we highlight some below.

\begin{question}
    \label{question:association}
    Association's presence in all known cases of the Veronese counting problem is hard to ignore -- what is its role in the general problem? 
\end{question}  

In fact, a closer look shows an intriguing possibility, which we now explain.  In all known instances, it appears as though association is a composite of a Cremona transformation followed by a Veronese embedding.  Specifically, let $m = {n+d \choose d} + n +1$, and consider a general tuple of points $(p_1, \dots, p_{m}) \in (\P^{n})^{m}$.  Let $(q_1, \dots, q_{m}) \in (\P^{N})^{m}$ be associated to $(p_{1}, \dots, p_{m})$, where $N = {n+d \choose d}-1$. Finally let $\ver_{d} :\P^{n} \to \P^{N}$ denote the standard $d$-uple Veronese embedding (after choosing coordinates).  

In every understood case of the general Veronese counting problem, there exists a Cremona transformation $\gamma: \P^{n} \dashrightarrow \P^{n}$ and an automorphism $g: \P^{N} \to \P^{N}$ such that the composite $g \circ \ver_{d} \circ \gamma$ sends each point $p_{i}$ to its corresponding point $q_{i}$. The Cremona transformation $\gamma$ need not be unique, but the fact that it exists in the first place is not obvious to us.  And so a follow-up to \Cref{question:association} would be to determine whether a ``$g \circ \ver_{d} \circ \gamma$'' mapping always exists which outputs an associated set for $p_1, \dots, p_m$ in every instance of the enumerative problem.

\begin{remark}
    In this direction, a dimension count suggests that for a fixed set of $14$ general points $a_1, \dots, a_{14}$ in $\P^{3}$, there should exist finitely many \emph{quadro-quartic} Cremona transformations $\gamma: \P^{3} \dashrightarrow \P^{3}$ such that the composition of $\gamma$ with a $2$-uple Veronese embedding sends the tuple $(a_{1}, \dots, a_{14})$ to an associated tuple $(b_{1}, \dots, b_{14}) \in (\P^{9})^{14}$.  This may hint at an approach, admittedly ambitious, to finding the number $\nu_{2,3}$ of $2$-Veronese $3$-folds through $14$ general points.
\end{remark}   

\begin{question}
    \label{question:numerical}
    Is it possible to confirm Coble's example using numerical algebraic geometry software like \texttt{homotopycontinuation.jl}?
\end{question}

\begin{question}
    \label{question:monodromy} It can be shown that the monodromy group of Coble's enumerative problem $\nu_{2,2}=4$ is the full symmetric group $S_{4}$.  Is the monodromy group for $\nu_{3,2}$ also the full symmetric group?
\end{question}

\begin{question}
    \label{question:degeneration} Is there a straightforward degeneration argument reproving $\nu_{2,2}=4$?
\end{question}

\begin{question}
    \label{question:charp} Association is a characteristic independent theory.  By following Coble's reasoning in characteristic $p$, we find that when $p=2$ and only when $p=2$, the number $\nu_{2,2}$ drops -- it changes from $4$ to $2$ because the $2$-torsion of a general elliptic curve in characteristic $2$ consists of only $2$ points.  Modulo which primes $p$ (if any) does our $\nu_{3,2}= 4246$ calculation change?
\end{question}

\begin{question}
    \label{question:integral} Are there other integral expressions of the form \[\nu_{d,n} = \int_{X} \alpha^{m}\] where $X$ is a relatively tractable smooth projective variety and $\alpha$ is some cycle on $X$? 
\end{question}

\section{Sage code}
\label{sec:sagecode}

We provide the sage code we used to compute the number $\nu_{3,2}$, as well as several checks we performed to acquire confidence that there are no mistakes in the computation of weights of relevant bundles at fixed points. One thing to note is that we plug in specific values for $a,b,c$ into the equivariant expressions arising in the Atiyah-Bott localization formula. These are secretly constant, so the reader can verify that by changing the inputs of $a,b,c$, the various calculations below do not change. This is a further check on the integrity of the calculations.

\begin{lstlisting}[basicstyle=\ttfamily\tiny]  
  var('a','b','c')

  def symmetrize(p):
      return p(a=a,b=b,c=c) + p(a=a,b=c,c=b) + p(a=b,b=a,c=c) + p(a=b,b=c,c=a) + p(a=c,b=a,c=b) + p(a=c,b=b,c=a)
  
  # The ith elementary symmetric polynomial
  def sigma(i, L):
      ind = Set(range(0,len(L)))
      
      return sum([ prod([L[x] for x in S]) for S in ind.subsets(i) ])
  
  # BUNDLE WEIGHTS AT FIXED POINTS:
  # The first entry corresponds to an honest triangle, which is unchanged under the action of S_3 permuting homogeneous coordinates, and the rest have orbits of size 6.
  
  
  # Weight data for the bundle E on CT.
  E = [
      (5*a, 5*b, 5*c, 4*a+b, 4*a+c, 4*b+c, 4*b+a, 4*c+a, 4*c+b),
      (5*a, 4*a+b, 4*a+c, 5*c, 4*c+a, 4*c+b, 3*c+a+b, 3*c+2*b, 2*c+3*b),
      (5*b, 4*b+a, 4*b+c, 5*c, 4*c+a, 4*c+b, 3*c+a+b, 3*c+2*b, 2*c+3*b),
      (5*c, 4*c+b, 3*c+2*b, 2*c+3*b, c+4*b, 5*b, 4*c+a, 3*c+a+b, 2*c+a+2*b),
      (5*c, 4*c+a, 3*c+2*a, 4*c+b, 3*c+a+b, 2*c+2*a+b, 3*c+2*b, 2*c+2*b+a, 2*c+3*b),
      (5*c, 4*c+a, 3*c+2*a, 2*c+3*a, 4*c+b, 3*c+a+b, 3*c+2*b, 2*c+2*b+a, 2*c+3*b)
      ]
  
  # Weight data for the tangent bundle T of CT
  T = [
      (c-a, c-b, b-c, b-a, a-b, a-c),
      (a-c, a-b, c-a, 2*c-2*b, b-a, c-b),
      (c-a, 2*c-2*b, b-a, c-b, b-c, b-a),
      (c-a, 3*c-3*b, b-a, 2*c-2*b, 2*b-c-a, c-b),
      (3*b-3*a, 2*b-2*a, b-a, c-a, c-b, a-2*b+c),
      (a-b, c-b, c-a, 2*b-2*a, c-b, b-a)
      ]
  
  # Some Chern class expressions of E at the six representative fixed points.
  c1E = [ expand(sigma(1, x)) for x in E ]
  c2E = [ expand(sigma(2, x)) for x in E ]
  c3E = [ expand(sigma(3, x)) for x in E ]
  c4E = [ expand(sigma(4, x)) for x in E ]
  c6T = [ expand(sigma(6, x)) for x in T ]
  
  # This is the Atiyah-Bott localization formula for computing -c2(E) * c4(E) + c3(E)^2. It is the wrong answer, due to excess. 
  Wrong = expand(-c2E[0]*c4E[0] + c3E[0]^2)/expand(c6T[0]) + sum([symmetrize(expand(-c2E[i]*c4E[i] + c3E[i]^2)/expand(c6T[i])) for i in range(1,6)])
  
  # O's tautological rank three bundle's weights at the six fixed representative fixed points
  OOO = [
      (0, 0, 0),
      (0, b-c, 0),
      (0, b-c, 0),
      (0, b-c, 2*b-2*c),
      (0, a-c, b-c),
      (0, a-c, b-c)
      ]
  
  c1OOO = [ expand(sigma(1, x)) for x in OOO ]
  c2OOO = [ expand(sigma(2, x)) for x in OOO ]
  c3OOO = [ expand(sigma(3, x)) for x in OOO ]
  
  # O(1)'s tautological rank 3 bundle's weights at the 6 representative fixed points, followed by its Chern classes at those points.
  Oone = [
      (a, b, c),
      (a, b, c),
      (b, b, c),
      (c, b, 2*b-c),
      (a, b, c),
      (a, b, c)
      ]
  
  c1Oone = [ expand(sigma(1, x)) for x in Oone ]
  c2Oone = [ expand(sigma(2, x)) for x in Oone ]
  c3Oone = [ expand(sigma(3, x)) for x in Oone ]
  
  # O(2)'s tautological rank 3 bundle's weights at the six representative fixed points, followed by its Chern classes at those points.
  Otwo = [
      (2*a, 2*b, 2*c),
      (2*a, b+c, 2*c),
      (2*b, b+c, 2*c),
      (2*b, b+c, 2*c),
      (a+c, b+c, 2*c),
      (a+c, b+c, 2*c)
      ]
  
  c1Otwo = [ expand(sigma(1, x)) for x in Otwo ]
  c2Otwo = [ expand(sigma(2, x)) for x in Otwo ]
  c3Otwo = [ expand(sigma(3, x)) for x in Otwo ]
  
  # O(3)'s tautological bundles weights at the six representative fixed points, followed by its Chern classes at those points.
  Othree = [
      (3*a, 3*b, 3*c),
      (3*a, b+2*c, 3*c),
      (3*b, b+2*c, 3*c),
      (b+2*c, 2*b+c, 3*c),
      (a+2*c, b+2*c, 3*c),
      (a+2*c, b+2*c, 3*c)
      ]
  
  c1Othree = [ expand(sigma(1, x)) for x in Othree ]
  c2Othree = [ expand(sigma(2, x)) for x in Othree ]
  c3Othree = [ expand(sigma(3, x)) for x in Othree ]
  
  # The line bundle H's weights at the six representative fixed points
  H = [
      a+b+c,
      a+2*c,
      b+2*c,
      3*c,
      3*c,
      3*c
      ]
  # We will just use the symbol H for its first Chern class c1H
  
  # Delta_0's expression at the six representative fixed points
  Delta0 = [ c3E[i]^2-c2E[i]*c4E[i] for i in range(0,6)]
  
  
  # Delta_2's expression at the six representative fixed points
  Delta2 = [ c2E[i]^2-c1E[i]*c3E[i] for i in range (0,6)]
  
  # Delta_4's expression at the six representative fixed points
  Delta4 = [ c1E[i]^2-c2E[i] for i in range(0,6)]
  
  
  
  # THE CLASSES INC, LIN, ETC:
  # The class Inc to be used in localization formula (tuple of 6 degree two expressions in abc)
  
  Inc = [H[i]^2 - c2Oone[i] + 2*c2Otwo[i] - c2Othree[i] for i in range(0,6)]
  
  # the class of Lin to be used in localization formula (tuple of 6 degre two expressions in abc)
  
  Lin = [c2Oone[i] for i in range(0,6)]
  
  # The class 4Inc + Lin. This is relevant because of the relation BP(p) = Dom(p) + 4Inc(p) + Lin(p).
  
  FourIncPlusLin = [4*Inc[i] + Lin[i] for i in range(0,6)]
  
  # The expression we wish to integrate, an expression in a,b,c at each of the six representative fixed points.
  
  INTEGRAND = [Delta0[i] - 13*Delta2[i]*FourIncPlusLin[i] + 78*Delta4[i]*FourIncPlusLin[i]^2 - 286*FourIncPlusLin[i]^3 for i in range(0,6)]
  
  # Application of Atiyah-Bott to integrate INTEGRAND, summing over all 31 fixed points, remembering that the honest triangle is its own S3 orbit, so we do not symmetrize it.  
  
  Answer = expand(INTEGRAND[0])/expand(c6T[0]) + sum([symmetrize(expand(INTEGRAND[i])/expand(c6T[i])) for i in range(1,6)])
  
  
  
  # TESTS: 
  # Let's compute H^6. This should be 15.
  Hsixth = expand(H[0]^6)/expand(c6T[0]) + sum([symmetrize(expand(H[i]^6)/expand(c6T[i])) for i in range(1,6)])
  
  #print(Hsixth(a=2, b=5, c=-9)) This indeed yields 15.
  
  # Let's compute H^4*Inc. This should be 3. 
  HfourthInc = expand(Inc[0]*H[0]^4)/expand(c6T[0]) + sum([symmetrize(expand(Inc[i]*H[i]^4)/expand(c6T[i])) for i in range(1,6)])
  
  # print(HfourthInc(a=2,b=3,c=-2)) #This indeed gives 3.
  
  # Let's compute c_{3}(2)^2, which should be (4 choose 3) = 4. 
  Otwocheck = expand(c3Otwo[0]^2)/expand(c6T[0]) + sum([symmetrize(expand(c3Otwo[i]^2)/expand(c6T[i])) for i in range(1,6)])
  
  # print(Otwocheck(a=5,b=2,c=7)) #Indeed this gives 4.
  
  # Let's compute c_3(3)^{2}, which should be (9 choose 3).
  Othreecheck = expand(c3Othree[0]^2)/expand(c6T[0]) + sum([symmetrize(expand(c3Othree[i]^2)/expand(c6T[i])) for i in range(1,6)])
  
  # print(Othreecheck(a=-3, b=5, c=4)) #This indeed gives (9 choose 3) = 84.
  
  # Let's compute c_3(2)*c_3(3), which should be (6 choose 3) = 20.
  Otwothreecheck = expand(c3Othree[0]*c3Otwo[0])/expand(c6T[0]) + sum([symmetrize(expand(c3Othree[i]*c3Otwo[i])/expand(c6T[i])) for i in range(1,6)])
  
  # print(Otwothreecheck(a=3,b=2,c=-5)) #Indeed this gives 20.
  
  Lincubed = expand(Lin[0]^3)/expand(c6T[0]) + sum([symmetrize(expand(Lin[i]^3)/expand(c6T[i])) for i in range(1,6)])
  
  # print(Lincubed(a=-2,b=13,c=4)) yields 0 as is should because Lin cubed is indeed 0
  
  IncsquaredLin = expand(Inc[0]^2 * Lin[0])/expand(c6T[0]) + sum([symmetrize(expand(Inc[i]^2 * Lin[i])/expand(c6T[i])) for i in range(1,6)])
  
  # print(IncsquaredLin(a=-1,b=12,c=13)) yields 0, which it should. 
  
  Inccubed = expand(Inc[0]^3)/expand(c6T[0]) + sum([symmetrize(expand(Inc[i]^3)/expand(c6T[i])) for i in range(1,6)])
  
  # print(Inccubed(a=12,b=-3,c=5)) yields 1, as it should because Inc^3 = 1 for simple geometric reasons.
  
  #FINAL COMPUTATIONS:
  print(Wrong(a=45,b=3,c=10)) # This is the wrong answer 57728, where we apply the Porteous formula to the bundle E.
  print(Answer(a=-20,b=9,c=7)) #This gives 4246.
\end{lstlisting}

\bibliographystyle{alpha}
 \bibliography{bibliography}

\end{document}